\documentclass[reqno, a4paper, 11pt]{amsart}
\usepackage{amsmath,amsthm,amssymb}
\usepackage{graphicx}
\usepackage{enumitem}
\usepackage{time} 
\usepackage{comment}
\usepackage{thm-restate}

\usepackage[dvipdfmx]{hyperref}
\hypersetup{
	setpagesize=false,
	bookmarksnumbered=true,%
	bookmarksopen=true,%
	colorlinks=true,%
	linkcolor=magenta,
	citecolor=blue,
}
\theoremstyle{plain}
\newtheorem{theorem}{Theorem}[section]
\newtheorem{lemma}[theorem]{Lemma}

\newtheorem{proposition}[theorem]{Proposition}
\theoremstyle{definition}
\newtheorem{definition}[theorem]{Definition}
\newtheorem{remark}[theorem]{Remark}
\newcommand{\CC}{\mathbb{C}}
\newcommand{\calT}{\mathcal{T}}
\newcommand{\calRT}{\mathcal{T}_\mathrm{rel}}
\newcommand{\calD}{\mathcal{D}}
\newcommand{\calO}{\mathcal{O}}
\newcommand{\Int}{\mathrm{int\,}}
\newcommand{\bcs}{\,\natural\,}
\newcommand{\rtdcap}[1]{\mathrm{Cap}\left( #1 \right)}
\DeclareMathOperator{\id}{id}

\title[Distinguishing diffeomorphism types of relative trisections]{Distinguishing diffeomorphism types of relative trisections}
\author[Natsuya Takahashi]{Natsuya Takahashi}
\date{February 15, 2026. \textit{Revised}: June 19, 2026.}
\subjclass[2020]{57R55, 57R65, 57K40}
\keywords{4-manifold, trisection, relative trisection}
\address{Department of Mathematics, College of Science and Technology, Nihon University, 1-8-14, Kanda-Surugadai, Chiyoda-ku, Tokyo 101-8308, Japan}
\email{takahashi.natsuya@nihon-u.ac.jp}
\begin{document}
\begin{abstract}
We distinguish diffeomorphism types of relative trisections using a ``capping'' operation, which yields a trisection diagram of a closed 4-manifold from a relative trisection diagram. Using this operation, we give various examples of non-diffeomorphic relative trisections of the same 4-manifold with boundary. We also study how the corresponding trisected 4-manifold changes under the capping operation.
\end{abstract}
\maketitle

\section{Introduction}

A trisection of a closed $4$-manifold $X$ is a decomposition $X = X_{1} \cup X_{2} \cup X_{3}$ into three $1$-handlebodies whose triple intersection is a surface. 
A similar decomposition for a $4$-manifold with connected boundary is called a relative trisection.
These notions were introduced by Gay and Kirby~\cite{GayKir16} as $4$-dimensional analogues of Heegaard splittings for $3$-manifolds.
Just as a Heegaard splitting is encoded by a Heegaard diagram, a trisection is represented by a trisection diagram, which is a $4$-tuple $(\Sigma; \alpha, \beta, \gamma)$ consisting of a surface and three collections of curves.


A basic invariant of a Heegaard splitting is its genus, which is defined as the genus of the splitting surface.
A trisection of a closed $4$-manifold is associated with a pair $(g,k)$, where $g$ is the genus of the triple intersection surface and $k$ is the genus of each sector.
Similarly, a relative trisection of a $4$-manifold with boundary is associated with a $4$-tuple $(g,k; p,b)$, where the additional parameters $p$ and $b$ are the genus of the induced open book and the number of the binding components, respectively.


One of the themes in trisection theory is the classification of trisections up to diffeomorphism.
Two (relative) trisections $X = X_{1} \cup X_{2} \cup X_{3}$ and $X' = X'_{1} \cup X'_{2} \cup X'_{3}$ are called \textit{diffeomorphic} if there exists an orientation-preserving diffeomorphism $f \colon X \to X'$ such that $f(X_{i}) = X'_{i}$ for each $i \in \{1,2,3\}$. 
In this paper, we focus on $4$-manifolds admitting more than one diffeomorphism type of trisections.
It is immediate that if $(g,k)\neq (g',k')$, then two trisections with these parameters are not diffeomorphic.
This observation naturally leads to the question of the non-uniqueness of trisections with the same parameters.
Specifically, we consider the following problems:
\begin{itemize}
\item
In the closed case, does a given closed $4$-manifold admit more than one $(g,k)$-trisection up to diffeomorphism?
\item
In the relative case, does a given $4$-manifold with boundary admit more than one $(g,k;p,b)$-relative trisection up to diffeomorphism?
\end{itemize}


These problems have been studied primarily for trisections of minimal genus.
Here, a trisection is said to have \textit{minimal genus} if the underlying $4$-manifold admits no trisection of smaller genus. This minimal value, denoted by $g(X)$, is called the \textit{trisection genus} of $X$.
In the closed case, Islambouli~\cite{Isl21} proved that certain spun Seifert fibered spaces admit non-diffeomorphic minimal genus trisections with the same parameters, using Nielsen equivalence to distinguish them (see also \cite{IsoOga24a} for related work).
In the relative case, the author~\cite{Tak24} gave a pair of non-diffeomorphic minimal genus relative trisections of $S^2\times D^2$.
The proof in the paper is based on a diagrammatic approach.
Recall that two (relative) trisections are diffeomorphic if and only if their associated (relative) trisection diagrams are diffeomorphism and handleslide equivalent (see Subsection~\ref{sub:trisdiag}).
To distinguish relative trisections, the author used an operation that yields a trisection diagram of a closed $4$-manifold from a relative trisection diagram.

\begin{definition}\label{dfn:gdo}
Let $\calD=(\Sigma;\alpha,\beta,\gamma)$ be a $(g,k;p,b)$-relative trisection diagram.
Let $\rtdcap{\Sigma}$ be the genus-$g$ closed surface obtained from $\Sigma$ by gluing $b$ disks to the boundary circles.
Then, we define the capping of $\calD$ as $\rtdcap{\calD}:=(\rtdcap{\Sigma};\alpha,\beta,\gamma)$, where the $\alpha$-, $\beta$-, and $\gamma$-curves are regarded as lying on $\rtdcap{\Sigma}$.
\end{definition}
\begin{figure}[!htbp]
\centering
\includegraphics[scale=0.7]{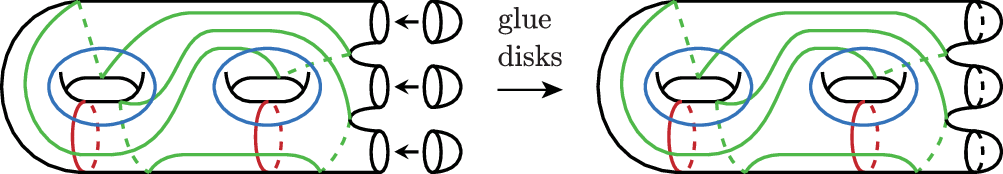}
\caption{Left: relative trisection diagram $\calD=(\Sigma;\alpha,\beta,\gamma)$. Right: trisection diagram $\rtdcap{\calD}=(\rtdcap{\Sigma};\alpha,\beta,\gamma)$.}
\label{fig:td-gd}
\end{figure}

See Figure~\ref{fig:td-gd} for an example of the operation of Definition~\ref{dfn:gdo}.
The next theorem gives a useful tool for distinguishing relative trisection diagrams.

\begin{theorem}[\cite{Tak24}]\label{thm:equiv-rtd-gd}
If $\calD$ is a $(g,k;0,b)$-relative trisection diagram, then $\rtdcap{\calD}$ is a $(g,k-b+1)$-trisection diagram of a closed $4$-manifold.
In addition, if two $(g,k;0,b)$-relative trisection diagrams $\calD$ and $\calD'$ are diffeomorphism and handleslide equivalent, then $\rtdcap{\calD}$ and $\rtdcap{\calD'}$ are also diffeomorphism and handleslide equivalent.
\end{theorem}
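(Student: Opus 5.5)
We split the proof into the two assertions of Theorem~\ref{thm:equiv-rtd-gd}: first that $\rtdcap{\calD}$ is a trisection diagram with the stated parameters, and then that the capping operation respects diffeomorphism and handleslide equivalence.

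\textbf{The capping is a trisection diagram.} Recall that $\calD=(\Sigma;\alpha,\beta,\gamma)$ being a $(g,k;p,b)$-relative trisection diagram means the following. $\Sigma$ has genus $g$ and $b$ boundary components, and each of $\alpha,\beta,\gamma$ consists of $g-p$ curves. Moreover, each pair, say $(\Sigma;\alpha,\beta)$, is handleslide and diffeomorphism equivalent to a standard pair. This standard pair is determined by the parameters and consists of $g-k+p+b-1$ dual pairs of curves and $k-2p-b+1$ parallel pairs, disjoint from a standard configuration of genus $p$ and the boundary. With $p=0$ each collection has $g$ curves. After gluing $b$ disks, $\rtdcap{\Sigma}$ is closed of genus $g$, and each collection is still $g$ disjoint curves. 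We first check that each collection cuts $\rtdcap{\Sigma}$ into a planar surface; it then cuts $\rtdcap{\Sigma}$ into a $2g$-punctured sphere, so it is a cut system. We then check the pairwise standardness in the capped surface. Handleslides performed on $\Sigma$ remain handleslides on $\rtdcap{\Sigma}$, and diffeomorphisms of $\Sigma$ extend over the capping disks. So it suffices to cap the standard relative pair and identify it. It has $g-k+b-1$ dual pairs. Its $k-b+1$ parallel pairs sit in a planar region of $\Sigma$ together with the $b$ boundary circles. After capping, that region becomes a surface in which the parallel pairs are genuinely standard, that is, they bound a $\#^{k-b+1}(S^1\times S^2)$ pattern. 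Counting dual pairs, the capped pair is a genus-$g$ Heegaard diagram of $\#^{k-b+1} S^1\times S^2$ in standard form. Hence $\rtdcap{\calD}$ is a $(g,k-b+1)$-trisection diagram. The main obstacle is this step. It requires the precise standard model of relative pairs, in which the parallel curves separate the boundary components in the arranged way, and checking that after capping the curves that were "parallel through boundary" become isotopic in pairs or turn into the standard configuration. I would draw the standard model explicitly, as in Figure~\ref{fig:td-gd}, and verify this directly.

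\textbf{Capping respects the equivalence.} Suppose $\calD$ and $\calD'$ are related by a sequence of diffeomorphisms of surfaces with boundary and handleslides within one collection. It suffices to treat a single move.

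For a diffeomorphism $\phi\colon\Sigma\to\Sigma'$ carrying the curves of $\calD$ to those of $\calD'$, we extend $\phi$ across the capping disks by the Alexander trick, coning radially. This gives a diffeomorphism $\rtdcap{\Sigma}\to\rtdcap{\Sigma'}$ carrying the curves correspondingly. Orientation is preserved because $\phi$ preserves it.

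For a handleslide, the band in $\Sigma$ connecting two curves of one family, disjoint from the other curves of that family, is still such a band in $\rtdcap{\Sigma}$. So the result is a handleslide there. Isotopies on $\Sigma$ likewise induce isotopies on $\rtdcap{\Sigma}$.

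Concatenating these moves gives that $\rtdcap{\calD}$ and $\rtdcap{\calD'}$ are diffeomorphism and handleslide equivalent. This part is routine. The only subtlety is that the relative equivalence may be defined with diffeomorphisms that need not fix the boundary, and these are still handled by the coning extension.
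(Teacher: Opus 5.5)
Your proposal is correct and follows essentially the same route as the source: the paper cites \cite{Tak24}, and Lemma~\ref{lem:rtdcap} recovers the first part by capping one boundary circle at a time. In both, standardness of each pair and the equivalence of diagrams are transported to $\rtdcap{\Sigma}$ by extending surface diffeomorphisms over the capping disks and observing that handleslides and isotopies on $\Sigma$ remain handleslides and isotopies on $\rtdcap{\Sigma}$. Two remarks: the step you flag as the main obstacle is in fact immediate, and your own opening description of the model already shows why—for $p=0$ the standard pair consists of $k-b+1$ honestly isotopic pairs and $g-k+b-1$ dual pairs on the handles, with the $b$ boundary circles lying in the complement, so capping yields literally the standard genus-$g$ diagram of $\#^{k-b+1}(S^1\times S^2)$ and no curves are ``parallel through the boundary''; and radial coning only gives a homeomorphism at the center, so to get a diffeomorphism you should first isotope the boundary restriction to a rotation in a collar and then extend by that rotation.
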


By this theorem, if the closed trisection diagrams $\rtdcap{\calD}$ and $\rtdcap{\calD'}$ are not diffeomorphism and handleslide equivalent, then the original relative trisection diagrams $\calD$ and $\calD'$ are also not equivalent.
In~\cite{Tak24}, we used this method to distinguish diffeomorphism types of relative trisections for $S^2\times D^2$.
The present paper gives further examples using this approach.
Our first result is the following.


\begin{theorem}\label{thm:cl+1hdlbd}
Suppose that a closed $4$-manifold $X$ admits non-diffeomorphic $(g,k)$-trisections.
Then, for any non-negative integer $n$, the $4$-manifold $X\# (\natural^n{S^1\times D^3})$ admits non-diffeomorphic $(g,k;0,n+1)$-relative trisections whose induced open books are equivalent.
\end{theorem}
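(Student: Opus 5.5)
The plan is to build the two relative trisections directly from the two given closed trisections of $X$. We then distinguish them by capping and applying Theorem~\ref{thm:equiv-rtd-gd}.

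\textbf{Construction.} Let $\calD_1=(\Sigma;\alpha_1,\beta_1,\gamma_1)$ and $\calD_2=(\Sigma;\alpha_2,\beta_2,\gamma_2)$ be trisection diagrams of the two non-diffeomorphic $(g,k)$-trisections of $X$. For each $i$, choose a small disk $D\subset\Sigma$ disjoint from all curves of $\calD_i$, and remove $n+1$ disjoint open subdisks from $D$. This gives a surface $\Sigma^\circ$ of genus $g$ with $n+1$ boundary circles, and we set $\calD_i^\circ=(\Sigma^\circ;\alpha_i,\beta_i,\gamma_i)$.

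Geometrically, $\calD_i^\circ$ is the boundary-connected sum of $\calD_i$ with the genus-$0$ relative trisection diagram of $\natural^n S^1\times D^3$. That diagram consists of a planar surface with $n+1$ boundary components and no curves. So $\calD_i^\circ$ should be a relative trisection diagram of $X\#(\natural^n S^1\times D^3)$.

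\textbf{Verification that $\calD_i^\circ$ is a relative trisection diagram.} Each pair, for instance $(\alpha_i,\beta_i)$, is handleslide–diffeomorphism equivalent to the standard closed pair on $\Sigma$. All handleslides and diffeomorphisms can be chosen supported away from $D$. After that normalization, the pair on $\Sigma^\circ$ is the standard relative pair with all boundary circles in one complementary region.

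I would then read off the parameters using the paper's conventions for $(g,k;p,b)$. The number of curves in each family is $g=g-p$, so $p=0$, and $b=n+1$. I would also check that the sector parameter agrees with the statement's $(g,k;0,n+1)$, recomputing $k$ from the count of dual pairs in the standard relative model. This bookkeeping step is where I expect the most care to be needed. I would double-check it against Theorem~\ref{thm:equiv-rtd-gd}, which predicts that capping lowers the sector parameter by $b-1=n$.

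The total space is identified via the boundary-connected-sum description. Alternatively, the relative trisection can be viewed as $X$ minus a ball, trisected, with $n$ trivial $1$-handles attached along the planar page.

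\textbf{Open books.} The induced open book of each $\calD_i^\circ$ has page the planar surface $P$ with $n+1$ boundary components. Its monodromy is computed by the standard algorithm from arcs in the region of $D$, which no curve meets. Hence the monodromy is the identity in both cases, giving the same open book (on $\#^n S^1\times S^2$).

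\textbf{Distinguishing.} By construction, $\rtdcap{\calD_i^\circ}=\calD_i$, since gluing back the $n+1$ disks recovers $\Sigma$ with the original curves. Suppose $\calD_1^\circ$ and $\calD_2^\circ$ were diffeomorphism and handleslide equivalent. Then Theorem~\ref{thm:equiv-rtd-gd} would make $\calD_1$ and $\calD_2$ equivalent, contradicting that the closed trisections are non-diffeomorphic. Hence the two relative trisections are not diffeomorphic, while their open books agree, as required.
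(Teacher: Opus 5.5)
Your proposal is correct and follows the paper's proof. You take the connected sum (Appendix~A) of each closed diagram with $(\Sigma_{0,n+1};\emptyset,\emptyset,\emptyset)$, observe that both results induce $(\Sigma_{0,n+1},\id)$, note that capping returns the original closed diagrams, and conclude via Theorem~\ref{thm:equiv-rtd-gd}. Since $\calD_i$ is closed, this is a \emph{connected} sum, not a boundary connected sum.

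The bookkeeping step you flagged does not come out as $k$. The punctured standard pair has $g-k$ dual pairs, while the $(g,k';0,n+1)$ model has $g-k'+n$, so $k'=k+n$. This is exactly the drop of $n$ that Theorem~\ref{thm:equiv-rtd-gd} predicts.

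This is a misprint in the statement, not a flaw in your construction. The paper's own proof produces $(g,k+b-1;0,b)=(g,k+n;0,n+1)$-relative trisections. Moreover, for $n\ge 1$ no $(g,k;0,n+1)$-relative trisection of $X\#(\natural^n S^1\times D^3)$ exists. Indeed, $\chi=g-3k'+3p+2b-1=g-3k'+2n+1$ must equal $\chi(X)-1-n=1+g-3k-n$, which forces $k'=k+n$. So state your conclusion with sector genus $k+n$ rather than trying to match the literal parameter.

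You can also drop the claim that all slides and isotopies can be chosen away from $D$. An isotopy may sweep a curve across a puncture, and that move then has to be traded for slides over the other curves of the same family. Invoking the geometric connected sum of Appendix~A, as the paper does, already guarantees that your punctured diagrams are relative trisection diagrams of $\calT_i\#\calT_{n+1}$.
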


This theorem follows from taking the connected sums of non-diffeomorphic trisections of a closed $4$-manifold with a genus-$0$ relative trisection of the $1$-handlebody $\natural^{n}(S^{1} \times D^{3})$. 
(For details on the connected sum operation between closed and relative trisections, see Appendix~A.)

A possible approach to distinguishing diffeomorphism types of relative trisections is to compare their induced open books.
It is worth emphasizing that the relative trisections appearing in Theorem~\ref{thm:cl+1hdlbd} are not diffeomorphic, although their induced open books are equivalent.
Furthermore, in the case $n=0$ of Theorem~\ref{thm:cl+1hdlbd}, we obtain the following refinement.

\begin{theorem}\label{thm:-D4}
If a closed $4$-manifold $X$ admits non-diffeomorphic $(g,k)$-trisections of minimal genus, 
then $X-\Int{D^4}$ admits non-diffeomorphic $(g,k;0,1)$-relative trisections of minimal genus.
\end{theorem}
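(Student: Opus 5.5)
The plan has two parts: first get the non-diffeomorphic relative trisections from the $n=0$ case of Theorem~\ref{thm:cl+1hdlbd}, then prove minimality by a genus comparison made through capping.

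\textbf{Construction and distinctness.} Let $T$ and $T'$ be non-diffeomorphic minimal genus $(g,k)$-trisections of $X$, with diagrams $\calD_X$ and $\calD'_X$. For $n=0$, the manifold $\natural^0(S^1\times D^3)$ is $D^4$. Its genus-$0$ relative trisection has diagram the disk with no curves, so its parameters are $(0,0;0,1)$.

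Take the connected sum of $\calD_X$ with this diagram, as in Appendix~A. Diagrammatically, this means removing a disk from the closed surface of $\calD_X$ away from the curves. The result is a $(g,k;0,1)$-relative trisection diagram $\calD$ of $X-\Int D^4$; define $\calD'$ from $\calD'_X$ in the same way.

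Capping replaces the removed disk, so $\rtdcap{\calD}=\calD_X$ and $\rtdcap{\calD'}=\calD'_X$. This is consistent with Theorem~\ref{thm:equiv-rtd-gd}: with $b=1$ the capped diagram has parameters $(g,k-1+1)=(g,k)$. The closed diagrams $\calD_X$ and $\calD'_X$ are not diffeomorphism and handleslide equivalent, because $T$ and $T'$ are not diffeomorphic. By Theorem~\ref{thm:equiv-rtd-gd}, $\calD$ and $\calD'$ are therefore not equivalent either, so the two relative trisections are not diffeomorphic.

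\textbf{Minimality.} I will show that any $(g'',k'';p'',b'')$-relative trisection of $X-\Int D^4$ has $g''\ge g$. The boundary is $S^3$, and the induced open book on $S^3$ has pages of genus $p''$ with $b''$ binding components.

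The main obstacle is that Theorem~\ref{thm:equiv-rtd-gd} applies only when $p''=0$. I see two ways to handle this.

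\emph{Route 1 (cap off the boundary).} Glue $D^4$ back along the $S^3$ boundary so as to extend the relative trisection to a closed trisection of $X$. The standard gluing results (Castro, Gay--Kirby) say that a relative trisection glued to a relative trisection of $D^4$ inducing the same open book gives a closed trisection of genus $g''$ plus a correction depending on the open book. The problem is that this correction is generally positive, so it does not by itself give $g''\ge g$.

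\emph{Route 2 (reduce to $p''=0$).} The cleaner approach is to note that for a genus-$0$ page, Theorem~\ref{thm:equiv-rtd-gd} shows that capping a diagram of $X-\Int D^4$ yields a genus-$g''$ trisection diagram of a closed manifold. That closed manifold should be $X$ itself, since filling with $D^4$ is the unique filling of $S^3$; this identification needs to be checked. It would give $g''\ge g(X)=g$.

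For $p''>0$, I expect to use the inequality $g''\ge 2p''+b''-1$ satisfied by relative trisection parameters. I would then try to show that the genus of the capped-off closed trisection is at most $g''$; a possible tool is a lower bound on trisection genus via the rank of $\pi_1$ or the Euler characteristic. If this cannot be made to work in general, I would restrict the notion of minimal genus used in the statement to $p=0$ relative trisections, or use the Gay--Kirby bound for relative trisections to handle this step.
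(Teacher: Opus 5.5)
\textbf{Distinctness is fine; minimality has a genuine gap.} Your construction and distinctness argument match the paper's. You take the $n=0$ case of Theorem~\ref{thm:cl+1hdlbd}, form the connected sum with $(D^2;\emptyset,\emptyset,\emptyset)$, cap back to $\calD_X$ and $\calD'_X$, and apply Theorem~\ref{thm:equiv-rtd-gd}. The problem is that you never prove $g(X)\le g(X-\Int D^4)$.

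Route~2 rests on a false step. When $p''=0$ but $b''\ge 2$, the capped diagram does \emph{not} represent $X$. By Theorem~\ref{thm:cap-handle}, capping attaches $b''-1$ $2$-handles along binding components in addition to the $4$-handle. So you get $X\#Q$ for some closed $Q$ with $b_2(Q)=b''-1$. For example, boundary-summing your $(g,k;0,1)$ diagram with $\calD^{\pm}$ gives a genus-$(g+1)$ relative trisection of $X-\Int D^4$ with $(p,b)=(0,2)$, and its capping represents $X\#(\pm\CC P^2)$. The inequality $g(X\#Q)\ge g(X)$ is not available in general; it would follow from additivity of trisection genus, which is open. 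So this route yields nothing.

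For $p''>0$ you give no argument. The inequality $g''\ge 2p''+b''-1$ is not one of the parameter constraints; those are $2p+b-1\le k\le g+p+b-1$. Even where it holds, it does not relate $g''$ to $g(X)$. Restricting \emph{minimal genus} to $p=0$ relative trisections would change the statement.

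\textbf{The missing ingredient is Lemma~\ref{lem:rt-pb=01}.} It says that if $\partial W\cong S^3$, any minimal genus relative trisection of $W$ has $p=0$ and $b=1$. The paper's proof goes as follows.
\begin{itemize}
\item Connect the induced open book to $(D^2,\id)$ by Hopf (de)stabilizations (Giroux--Goodman).
\item Realize these by relative (de)stabilizations of Type~I, $(g,k;p,b)\mapsto(g+1,k+1;p,b+1)$, and Type~II, $(g,k;p,b)\mapsto(g+2,k+1;p+1,b-1)$.
\item Counting shows one reaches a $(g',k';0,1)$ relative trisection of $W$ with $g'=g+1-b-3p$. Minimality of $g$ then forces $1-b-3p\ge 0$.
\end{itemize}

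Once $(p,b)=(0,1)$, your Route~1 goes through with zero correction. Glue the $(0,0;0,1)$ relative trisection of $D^4$ along $(D^2,\id)$ (Castro--Ozbagci, Prop.~2.12), or equivalently cap the single boundary circle as in Theorem~\ref{thm:cap-b=1}. This produces a genus-$g$ trisection of $X$ itself. You correctly saw that the gluing correction is positive in general. What you missed is that it can only be positive for relative trisections that are not of minimal genus.
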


This theorem is obtained by combining the case $n=0$ of Theorem~\ref{thm:cl+1hdlbd} with the next proposition.

\begin{proposition}\label{thm:tg-eq}
For any closed $4$-manifold $X$, the trisection genus of $X$ and that of $X-\Int{D^4}$ coincide.
\end{proposition}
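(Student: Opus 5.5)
We show $g(X-\Int D^4)\le g(X)$ and $g(X)\le g(X-\Int D^4)$, where the relative trisection genus is the genus $g$ of the central surface of a relative trisection.

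\emph{First inequality.} Take a minimal genus $(g,k)$-trisection of $X$ and form its connected sum with the genus-$0$ relative trisection of $D^4$, i.e. the case $n=0$ of the construction behind Theorem~\ref{thm:cl+1hdlbd} (Appendix~A). Concretely, remove a small $4$-ball centered at a point of the central surface $\Sigma$. This ball meets each sector in a $4$-ball and meets $\Sigma$ in a disk. The result is a $(g,k;0,1)$-relative trisection of $X-\Int D^4$ whose central surface is $\Sigma$ minus a disk, so it still has genus $g$. Hence $g(X-\Int D^4)\le g(X)$.

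\emph{Second inequality.} Let $\calD=(\Sigma;\alpha,\beta,\gamma)$ be a $(g,k;p,b)$-relative trisection diagram of $X-\Int D^4$ of minimal genus. The boundary $S^3$ carries the induced open book, but this need not have $p=0$, $b=1$. So we cannot simply apply Theorem~\ref{thm:equiv-rtd-gd} to $\calD$; we must instead glue a relative trisection of $D^4$ compatible with that open book.

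The plan is to use the gluing theorem for relative trisections (Castro, Castro--Gay--Pinz\'on-Caicedo). If two relative trisections induce the same open book on their common boundary, up to orientation reversal, then they glue to a trisection of the union. The genus of the resulting central surface is the sum of the genera of the two pages' complements, computed by the corresponding Euler characteristic formula. So we need a relative trisection of $D^4$ inducing the mirror of the given open book on $S^3$, with central surface of genus $0$. Only then does the glued trisection of $X$ have genus at most $g$.

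\emph{Main obstacle.} A planar central surface forces the induced open book to be planar, i.e. $p=0$. When $p=0$, a genus-$0$ relative trisection of $D^4$ with that boundary open book is plausible, since planar open books of $S^3$ come from the trivial one via Hopf stabilizations. When $p>0$, however, the glued central surface would pick up genus roughly $p$ from the capping side. So simple gluing fails to give equality, and a different argument is needed.

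For the general case I would try two routes, in this order.

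\begin{enumerate}
\item Show that a minimal genus relative trisection of $X-\Int D^4$ can be modified, without increasing $g$, to have $p=0$ and $b=1$. The idea is to destabilize the open book inside the relative trisection, using the relative stabilization and Hopf-stabilization moves in reverse.
\item Use a diagrammatic argument. From $(\Sigma;\alpha,\beta,\gamma)$, fill each boundary component of $\Sigma$ and compare against the arced diagram of the open book. The claim to establish is that the resulting closed diagram $(\rtdcap{\Sigma};\alpha,\beta,\gamma)$ describes $X$, up to adding copies of $S^1\times S^3$ or $S^2\times S^2$ summands. These summands should cancel thanks to the sector-genus bookkeeping $k-b+1$, as in Theorem~\ref{thm:equiv-rtd-gd}.
\end{enumerate}

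I expect this second inequality to be the real difficulty. If the second route works, capping gives a trisection of $X$ of genus exactly $g$, which closes the argument.
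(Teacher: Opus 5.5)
Your first inequality is the paper's argument: connected sum with the $(0,0;0,1)$-relative trisection of $D^4$. The second inequality, however, is not proved. You outline two routes without carrying out either, and the idea that makes the argument work is missing.

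That idea is Lemma~\ref{lem:rt-pb=01}: a \emph{minimal genus} $(g,k;p,b)$-relative trisection of a $4$-manifold with boundary $S^3$ already has $p=0$ and $b=1$. The argument runs as follows.
By Giroux--Goodman, the induced open book and $(D^2,\id)$ are related by Hopf (de)stabilizations. By Castro, $\calRT$ is then related by relative (de)stabilizations to some $(g',k';0,1)$-relative trisection of the same manifold. A Type~I move changes $(g,p,b)$ by $(+1,0,+1)$ and a Type~II move by $(+2,+1,-1)$. So along any such sequence $\Delta g=\Delta b+3\Delta p$, which depends only on the endpoints. Hence $g'-g=(1-b)-3p$, and minimality ($g'\ge g$) forces $(p,b)=(0,1)$. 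The open book is then $(D^2,\id)$, and gluing the $(0,0;0,1)$-relative trisection of $D^4$ (Castro--Ozbagci) gives a $(g,k)$-trisection of $X$.

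Your route (1) points in this direction. But ``destabilize without increasing $g$'' remains only a hope until you see this path-independence. A Giroux--Goodman sequence typically has to stabilize before it can destabilize, and each stabilization raises $g$.

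Several of your intermediate claims are also wrong.
\begin{itemize}
\item A genus-$0$ relative trisection has no curves, so it is $(\Sigma_{0,b};\emptyset,\emptyset,\emptyset)$, which represents $\natural^{b-1}(S^1\times D^3)$. The only genus-$0$ relative trisection of $D^4$ is therefore the $(0,0;0,1)$ one, and the genus-$0$ cap you call plausible for planar open books with $b\ge 2$ does not exist.
\item Gluing along $b$ binding circles produces a central surface of genus $g_1+g_2+b-1$, not $g_1+g_2$. So $b>1$ obstructs your gluing plan just as much as $p>0$ does.
\item Route (2) fails as stated. For $p>0$, each curve family in $\rtdcap{\calD}$ has only $g-p$ curves, so it is not a trisection diagram. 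For $p=0$ and $b\ge 2$, capping attaches $b-1$ $2$-handles and a $4$-handle (Theorem~\ref{thm:cap-handle}). This gives $X\# Z$ for a closed, simply connected $Z$, which can be $\pm\CC{P^2}$: capping $\calD\bcs\calD^{\pm}$, for a $(g,k;0,1)$ diagram $\calD$ of $X-\Int D^4$, gives $X\#(\pm\CC{P^2})$. No bookkeeping with $k-b+1$ removes such a summand.
\end{itemize}
All of these problems disappear once the lemma shows that $p>0$ or $b>1$ cannot occur at minimal genus.
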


Although this statement may appear straightforward at first glance, its proof is not trivial.
The inequality $g(X) \geq g(X-\Int{D^4})$ is easy to check, but the reverse inequality requires a more subtle argument.


We also give non-diffeomorphic relative trisections for the same $4$-manifold that cannot be realized as the connected sum of a closed $4$-manifold and a $1$-handlebody.

\begin{theorem}\label{thm:ndrt-inf}
%
For any positive integer $n$, there exists a $4$-manifold $W$ with boundary such that the following conditions hold.
\begin{itemize}
\item $W$ admits two genus-$2n$ relative trisections $\calRT$ and $\calRT'$ with the same parameters.
\item The open books induced by $\calRT$ and $\calRT'$ are equivalent.
\item $\calRT$ and $\calRT'$ are not diffeomorphic as trisections.
\item The trisection genus of $W$ is $2n$.
\item $W$ cannot be decomposed as the connected sum
of a closed $4$-manifold and a $1$-handlebody.
\end{itemize}
\end{theorem}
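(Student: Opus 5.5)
Fix $n \geq 1$. The plan is to build $W$ as a boundary connected sum of $n$ copies of a single building block, namely $S^2\times D^2$. By \cite{Tak24}, this manifold has two non-diffeomorphic minimal genus relative trisections $\calT_0$ and $\calT_0'$. Both have genus $2$ and the same parameters, presumably $(2,1;0,2)$. Their induced open books on $S^2\times S^1$ are both the trivial genus-$0$ open book with annulus page, so they are equivalent. Set
\[
W=\natural^n(S^2\times D^2),\qquad \calRT=\natural^n\calT_0,\qquad \calRT'=\calT_0'\natural(\natural^{n-1}\calT_0),
\]
where $\natural$ is the boundary connected sum of relative trisections, realized diagrammatically by joining boundary components of the surfaces (Appendix~A). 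The genus is additive, so both are genus $2n$ with the same parameters. The induced open book is the Murasugi sum (plumbing) of the summands' open books, so both induce the same open book on $\#^n(S^2\times S^1)$.

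\textbf{Non-diffeomorphism.} I would pass to capped diagrams. By Theorem~\ref{thm:equiv-rtd-gd}, it suffices to show that $\rtdcap{\calD}$ and $\rtdcap{\calD'}$ are not diffeomorphism and handleslide equivalent. Capping takes the boundary connected sum to a connected sum of closed trisection diagrams, possibly with an extra stabilization-type summand coming from gluing disks across the joined boundary; I would check this first. In \cite{Tak24}, the caps of $\calD_0$ and $\calD_0'$ are distinguished, presumably because they are diagrams of different closed $4$-manifolds, say $S^4$ versus $\CC P^2\#\overline{\CC P^2}$, or $S^2\times S^2$ and a variant. If so, the two capped diagrams here represent different closed manifolds, distinguishable for example by the intersection form after connected sum. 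That gives non-equivalence immediately and sidesteps any subtle handleslide argument. If instead \cite{Tak24} distinguished two diagrams of the same manifold, I would need an invariant additive under connected sum, such as a Nielsen-class or group-trisection-type invariant, or I would change the building block to one whose caps differ in homeomorphism type.

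\textbf{Trisection genus.} I would use the lower bound $g(W)\geq \mathrm{rank}\,H_2(W)+\dots$ in the spirit of Chu--Tillmann-type Euler characteristic bounds for relative trisections. Concretely, $\chi(W)=g-3k+3p+2b-1$ together with $b\leq$ the genus constraints, and $H_2(W)=\mathbb{Z}^n$, should force $g\geq 2n$. One option is the relative analogue of the bound $g\ge \chi-2+\dots$. Alternatively, capping a minimal relative trisection of $W$ yields a closed trisection of a manifold with $b_2\ge 2n$ that forces genus $\ge 2n$. I expect this genus bound to be the main obstacle, since a relative trisection with different $p,b$ must also be excluded. I would first try the capping route when $p=0$ and handle $p>0$ separately.

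\textbf{Indecomposability.} Suppose $W\cong X\#H$ with $H$ a $1$-handlebody. The boundary $\partial W=\#^n(S^2\times S^1)$ forces $H=\natural^n(S^1\times D^3)$, by comparing $\pi_1(\partial)$ to $\pi_1(H)$, since $\partial(X\# H)=\partial H$. But $\pi_1(W)=1$ while $\pi_1(H)=F_n\neq1$ for $n\ge1$, a contradiction.
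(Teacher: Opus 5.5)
Your construction, the open-book comparison, the capping argument and the indecomposability argument are fine and essentially match the paper. The paper uses $\natural^n\calT_0'$ rather than your mixed sum, but either works. Capping a boundary connected sum gives exactly the connected sum of the capped diagrams, with no extra summand. As noted after Theorem~\ref{thm:nondiffeo-tak24}, the caps of the two building blocks are $S^2\times S^2$ and $S^2\tilde\times S^2$. So your two capped manifolds differ in the parity of their intersection forms. The paper leaves the last bullet implicit, and your $\pi_1$ argument fills it in correctly.

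The genuine gap is the lower bound $g(W)\ge 2n$. You only sketch it as a menu of strategies, and the ones you name cannot work as stated because they never use $\partial W$.

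\begin{itemize}
\item The data $\chi(W)=n+1$, $\pi_1(W)=1$, $H_2(W)\cong\mathbb{Z}^n$, together with $2p+b-1\le k\le g+p+b-1$, are all compatible with parameters $(n,0;0,1)$. These parameters are actually realized by $\#^n\CC P^2-\Int D^4$, which has the same $\chi$, $\pi_1$ and $b_2$. So any argument from this data alone gives at most $g\ge n$.
\item The capping route is only available for $p=0$. There the capped $(g,k-b+1)$-trisection has $\chi=n+1+b$, which gives only $g\ge n+b-1$. You would still need $b\ge n+1$, and the case $p>0$ is left open.
\end{itemize}

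The missing input is the boundary. By Theorem~\ref{thm:rt-prop}, the relative trisection induces an open book on $\partial W\cong\#^n(S^2\times S^1)$ with page $\Sigma_{p,b}$. This gives a Heegaard splitting of genus $2p+b-1$, so $2p+b-1\ge n$. Combine this with $\chi(W)=g-3k+3p+2b-1=n+1$ and $k\ge 2p+b-1$. Uniformly in $p$, this gives
\[
g=n+2+3k-3p-2b\ \ge\ n-1+3p+b\ =\ n+(2p+b-1)+p\ \ge\ 2n .
\]
These are exactly the ingredients of the paper's proof, which phrases the same computation as a contradiction for $g=2n-1$.
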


As an example satisfying the conditions of this theorem, we take the boundary connected sum of $n$ copies of $S^2 \times D^2$ (see Subsection~\ref{subsec:infinite}).
%


Furthermore, we obtain a result on non-diffeomorphic relative trisections for a contractible $4$-manifold known as the Akbulut cork.

\begin{theorem}\label{thm:ndrt-Ac}
The Akbulut cork admits non-diffeomorphic $(3,3;0,4)$-relative trisections, which are of minimal genus.
\end{theorem}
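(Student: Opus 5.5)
We would start from the standard Kirby diagram of the Akbulut cork $W$: a dotted circle and a $0$-framed unknot forming the symmetric Mazur link. The first step is to produce two explicit $(3,3;0,4)$-relative trisection diagrams $\calD$ and $\calD'$ of $W$. We would obtain them by running the standard algorithm that converts a Kirby diagram with one $1$-handle and one $2$-handle into a relative trisection diagram, with genus-$0$ pages and four binding components. The two runs would use two different but isotopic presentations of $W$.
\begin{itemize}
\item The first candidate is the original link versus the one with the roles of the dotted circle and the $0$-framed component exchanged. The symmetry of the Mazur link shows that the two presentations describe the same $W$, while the resulting diagrams may differ.
\item The alternative is to keep one presentation but choose a different ``bridge position'' or stabilization pattern when trisecting the $2$-handle.
\end{itemize}
In either case we would check directly that both tuples satisfy the relative trisection diagram conditions and have parameters $(3,3;0,4)$. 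As a consistency check, the Euler characteristic formula $\chi = g-3k+3p+2b-1$ gives $3-9+0+8-1=1=\chi(W)$.

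The second step is to distinguish the diagrams by capping. By Theorem~\ref{thm:equiv-rtd-gd}, $\rtdcap{\calD}$ and $\rtdcap{\calD'}$ are $(3,0)$-trisection diagrams of closed $4$-manifolds. A $(3,0)$-trisection describes a simply connected closed manifold with $b_2=3$. Reading off $\rtdcap{\calD}$ and $\rtdcap{\calD'}$, we would compute the intersection forms of the capped manifolds, for instance from the lattice of $\alpha,\beta,\gamma$-curves on $\rtdcap{\Sigma}$ or via the Kirby diagram recovered from a $(3,0)$-diagram. We would then aim to show that one is, say, $\#^3\mathbb{CP}^2$ or $S^2\times S^2\#\mathbb{CP}^2$ while the other has a different signature or parity. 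Non-diffeomorphic caps cannot have diffeomorphism and handleslide equivalent diagrams. Hence, by Theorem~\ref{thm:equiv-rtd-gd}, $\calD$ and $\calD'$ are not equivalent, and the corresponding relative trisections are not diffeomorphic.

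The third step is minimality: we must show that $W$ admits no relative trisection of genus at most $2$. Here we would use the constraints $\chi(W)=1$ and $k\geq 2p+b-1$ together with the induced Heegaard splitting of $\partial W$. The boundary Mazur homology sphere is not $S^3$, is not a lens space, and is not a connected sum of copies of $S^1\times S^2$. We would enumerate the finitely many admissible parameter tuples $(g,k;p,b)$ with $g\leq 2$. Genus $\le 1$ relative trisections are classified and give only $D^4$-like pieces, so only genus $2$ needs real work. For each genus-$2$ tuple we would argue that the induced open book or Heegaard splitting of $\partial W$ would force $\partial W$ into one of the excluded classes: for example, genus-$0$ pages with few bindings give connected sums of $S^1\times S^2$ or $S^3$, and low-genus splittings give lens spaces. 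This contradicts the fact that the Mazur boundary has Heegaard genus $2$ and is irreducible and not a lens space.

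We expect two obstacles. The first is building genuinely inequivalent diagrams whose caps differ, which requires guessing the right pair of presentations. The second, and subtler, is the genus-$2$ case analysis in the minimality argument, where the tuples with $k=2$ need an argument about which $3$-manifolds arise. If needed, for genus $2$ we would also try capping: a genus-$2$ relative trisection with $p=0$ caps to a closed genus-$2$ trisection, and these are classified.
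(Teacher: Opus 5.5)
\textbf{Verdict: genuine gap.} Your distinguishing mechanism is the paper's: cap to $(3,0)$-diagrams and invoke Theorem~\ref{thm:equiv-rtd-gd}. But the statement is an existence claim, and its content is an explicit pair of $(3,3;0,4)$-relative trisection diagrams of the cork whose caps are non-diffeomorphic. Your proposal never produces such a pair and never computes a single cap; you list finding them as an open obstacle.

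The paper takes $\calD_1,\calD_2$, together with the fact that the trisection genus is $3$, from \cite{Tak25b}. Its proof is then the computation of the caps:
\begin{itemize}
\item after surface diffeomorphisms and isotopies, $\rtdcap{\calD_1}$ is the standard diagram of $3\overline{\CC{P^2}}$;
\item Lemma~13 of \cite{GayKir16} turns $\rtdcap{\calD_2}$ into a Kirby diagram of $\CC{P^2}\#2\overline{\CC{P^2}}$;
\item the signatures differ.
\end{itemize}
Your first suggested source of a second diagram cannot work. The Mazur link of the Akbulut cork is preserved by an involution of $S^3$ exchanging its two components; this is the symmetry behind the cork twist. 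So the diagram with dot and $0$ exchanged is isotopic, as a decorated framed link, to the original, and it gives the same input to the algorithm. Any difference must therefore come from choices inside the algorithm. Nothing in your argument shows that such choices change the capped manifold, and that is exactly what has to be checked.

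For minimality the paper simply cites \cite{Tak25b}; your case analysis does not close as written. With $\chi(W)=1$, i.e.\ $3k=g+3p+2b-2$, together with $p\le g$ and $2p+b-1\le k\le g+p+b-1$, the only tuples with $g\le 2$ are $(0,0;0,1)$, $(1,1;0,2)$ and $(2,2;0,3)$.
\begin{itemize}
\item The first two are excluded, since their pages are a disk or an annulus.
\item The last has $2p+b-1=2$, equal to the Heegaard genus of $\partial W$, so the genus bound does not exclude it.
\end{itemize}
Your claim that genus-$0$ pages with few binding components only give $S^3$, lens spaces or $\#S^1\times S^2$ is false for three components. Page $\Sigma_{0,3}$ with monodromy $\tau_1^{n_1}\tau_2^{n_2}\tau_3^{n_3}$ gives the Seifert fibered space over $S^2$ with fibers of multiplicities $|n_i|$ and $|H_1|=|n_1n_2+n_2n_3+n_3n_1|$. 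For example, $(n_1,n_2,n_3)=(-2,3,5)$ gives a Poincar\'e sphere. To exclude this case you need the identification $\partial W\cong\Sigma(2,5,7)$ and the check that $|35\pm 14\pm 10|\neq 1$, or you can simply cite the known genus. Your capping fallback gives no immediate contradiction either: a $(2,2;0,3)$-relative trisection would just cap to a $(2,0)$-trisection, i.e.\ one of the standard genus-$2$ closed trisections.
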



By dropping the conditions on the minimality of genus and the equivalence of induced open books, we obtain the following result.

\begin{theorem}\label{thm:non-min-genus}
For any $4$-manifold $W$ with boundary, there exist two relative trisections $\calRT$ and $\calRT'$ of $W$ such that their parameters $(g,k;p,b)$ are equal, but $\calRT$ and $\calRT'$ are not diffeomorphic.
\end{theorem}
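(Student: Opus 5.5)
We realize the two relative trisections as connected sums of one fixed relative trisection of $W$ with two non-diffeomorphic trisections of the same closed manifold, and then distinguish them by capping. By Gay--Kirby, $W$ admits some relative trisection $\calT_0$ with parameters $(g_0,k_0;p_0,b_0)$. We need a closed $4$-manifold $Y$ diffeomorphic to $S^4$ that carries two non-diffeomorphic trisections with the same parameters. The point of insisting on $S^4$ is that the connected sum $W\# Y$ is again $W$.

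\textbf{Step 1: exotic trisections of $S^4$.} Islambouli's examples do not live on $S^4$, so the family will be different. I would take the connected sum of the genus-$0$ trisection of $S^4$ with stabilizations. One natural choice is to compare a $(g,k)$-trisection of $S^4$ that is a connected sum of standard genus-$1$ stabilizations with another of the same parameters. However, stabilized trisections of $S^4$ with equal $(g,k)$ are expected to be standard, so this choice fails.

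\textbf{Step 1, revised: capping on the relative side.} Instead, I will build the non-diffeomorphism into the relative side. The plan is to choose two relative trisections of $W$ with equal parameters $(g,k;0,b)$ (so that Theorem~\ref{thm:equiv-rtd-gd} applies) whose cappings are trisections of different closed $4$-manifolds. Non-diffeomorphic underlying manifolds certainly give non-equivalent closed diagrams.

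\textbf{Step 2: getting page genus $0$.} First stabilize $\calT_0$ so that the induced open book has page genus $0$. Relative stabilizations that change the open book by positive Hopf plumbings can reduce to a planar open book, though not necessarily with $p=0$ directly. I would use the fact that every $3$-manifold admits a planar open book after Hopf plumbing/deplumbing moves, realized by relative stabilization (Castro--Islambouli--Miller--Tomova). This gives $\calT_0$ with $p=0$.

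\textbf{Step 3: two modifications with different cappings.} Form $\calT=\calT_0\#\calT_{S^2\times S^2}$ and $\calT'=\calT_0\#\calT_{\CC P^2\#\overline{\CC P^2}}$, using the genus-$2$ trisections modified so that the summands are $S^4$. Concretely, the diagrams of $S^2\times S^2$ and $\CC P^2\#\overline{\CC P^2}$ are not diagrams of $S^4$, so the sum must be arranged so that the curves close up to $S^4$ only in the relative diagram while the capping sees a different manifold. This is delicate. The alternative I would try first is to boundary-stabilize: a relative stabilization that adds a binding component can be realized in two ways, via a Hopf band in the page or via a trivial arc. These two relative trisections have the same $(g,k;p,b)$, and their cappings, computed with Theorem~\ref{thm:equiv-rtd-gd}, should differ by $S^2\times S^2$ versus $\CC P^2\#\overline{\CC P^2}$, detected by the parity of the intersection form.

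\textbf{Main obstacle.} The main obstacle is exhibiting two diagrams of the same $W$ with equal parameters whose cappings have non-isomorphic intersection forms. I would compute the capped intersection form from the diagram via the standard $(\alpha,\beta,\gamma)$ lattice formula, and check parity in the two cases.
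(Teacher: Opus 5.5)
Your overall framework is the paper's: produce two relative trisections of $W$ with $p=0$ and equal parameters whose cappings are trisections of non-diffeomorphic closed $4$-manifolds, then apply Theorem~\ref{thm:equiv-rtd-gd}. The gap is that Step~3, which is the entire content of the proof, is never carried out, and neither version you sketch works.

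\textbf{First version.} Connected sum with trisections of $S^2\times S^2$ or $\CC P^2\#\overline{\CC P^2}$ changes the manifold to $W\#(S^2\times S^2)$, resp.\ $W\#\CC P^2\#\overline{\CC P^2}$. The only closed summand that leaves $W$ unchanged is $S^4$, and that returns you to the dead end of Step~1.

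\textbf{The ``alternative''.} This is only a guess, and its predictions are inconsistent. A single Type~I stabilization takes $(g,k;0,b)$ to $(g+1,k+1;0,b+1)$. The capped diagram therefore goes from type $(g,k-b+1)$ to $(g+1,k-b+1)$, so the Euler characteristic of the capped manifold rises by exactly one. It therefore cannot be $X(\rtdcap{\calD})$ plus a rank-two summand such as $S^2\times S^2$ or $\CC P^2\#\overline{\CC P^2}$. Even if you did arrange such summands, parity of the intersection form detects nothing once $X(\rtdcap{\calD})$ is already odd. For instance, $\CC P^2\#(S^2\times S^2)\cong 2\CC P^2\#\overline{\CC P^2}$.

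\textbf{The missing idea.} Use the sign of a Hopf band, and detect it by signature rather than parity.
\begin{enumerate}
\item Take a $(g,k;0,b)$-diagram $\calD$ of $W$. This exists directly: every closed $3$-manifold has a planar open book, and part (iii) of Theorem~\ref{thm:rt-prop} realizes it. Your appeal to Hopf plumbings to lower the page genus is unnecessary, and plumbing never lowers the page genus anyway.
\item Boundary-connect-sum $\calD$ with the two $(1,1;0,2)$-diagrams $\calD^{\pm}$ of $D^4$. Their open books are the annulus with a right-, resp.\ left-handed Dehn twist.
\item The results $\calD\bcs\calD^{\pm}$ are relative trisection diagrams of $W\bcs D^4\cong W$ with the same parameters $(g+1,k+1;0,b+1)$.
\item We have $\rtdcap{\calD\bcs\calD^{\pm}}=\rtdcap{\calD}\#\calD_{\pm\CC P^2}$. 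So with $X=X(\rtdcap{\calD})$, the capped manifolds are $X\#\CC P^2$ and $X\#\overline{\CC P^2}$.
\item Their signatures differ by $2$. Hence they are never orientation-preservingly diffeomorphic, whatever $X$ is, and Theorem~\ref{thm:equiv-rtd-gd} shows the two relative trisections are not diffeomorphic.
\end{enumerate}
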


Given an arbitrary relative trisection of a $4$-manifold $W$ with boundary, we provide a construction that yields two non-diffeomorphic relative trisections of $W$ with the same parameters. 
Note that the resulting relative trisections are not of minimal genus and do not induce equivalent open books.

In addition, we study the geometric meaning of the operation of Definition~\ref{dfn:gdo}.
More precisely, we describe how the corresponding trisected 4-manifold changes when the boundary of a relative trisection diagram is capped off.

\begin{theorem}\label{thm:cap-handle}
Let $\calD$ be a $(g, k; 0, b)$-relative trisection diagram, and let $\rtdcap{\calD}$ be the $(g,k-b+1)$-trisection diagram obtained by the operation of Definition~\ref{dfn:gdo}.
Let $W = W_1 \cup W_2 \cup W_3$ be the $(g, k; 0, b)$-relative trisection induced by $\calD$, and let $X = X_1 \cup X_2 \cup X_3$ be the $(g,k-b+1)$-trisection induced by $\rtdcap{\calD}$.
Then the closed $4$-manifold $X$ is obtained from $W$ by attaching $(b-1)$ $2$-handles and one $4$-handle, where the $2$-handles are attached along $(b-1)$ components of $\partial(W_1 \cap W_2 \cap W_3)$.
\end{theorem}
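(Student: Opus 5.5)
The plan is to compare the handle decompositions that $\calD$ and $\rtdcap{\calD}$ determine. Let $\Sigma$ be the central surface of $W$, of genus $g$ with $b$ boundary components. Its page is planar ($p=0$), so every page $P$ of the induced open book is a disk with $b-1$ holes.

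First recall how $W$ is built from $\calD$. Take $\Sigma\times D^2$ and attach $3$-dimensional $2$-handles along $\alpha\times\{1\}$, $\beta\times\{e^{2\pi i/3}\}$ and $\gamma\times\{e^{4\pi i/3}\}$. Then fill in the resulting pieces by the relative compression bodies and $4$-dimensional $1$-handlebodies $W_i\cong \natural^{k}(S^1\times D^3)$. The resulting boundary $\partial W$ carries the open book with page $P$.

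The closed manifold $X$ is obtained from $\rtdcap{\Sigma}\times D^2$ in the same way, filled in by $X_i\cong\natural^{k-b+1}(S^1\times D^3)$; Theorem~\ref{thm:equiv-rtd-gd} guarantees that this filling is possible. Note that
\[
\rtdcap{\Sigma}\times D^2=(\Sigma\times D^2)\cup\bigcup_{j=1}^{b}(D^2_j\times D^2),
\]
so capping the diagram corresponds to gluing $b$ copies of $D^2\times D^2$ along $\partial D_j^2\times D^2$.

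The key step is to interpret each such gluing. The piece $D^2_j\times D^2$ is glued to $W$ along $\partial D^2_j\times D^2$, a neighborhood of the binding component $B_j=\partial_j\Sigma\times\{0\}$ in $\partial W$. The framing is the page framing, since $\partial D_j^2\times\{pt\}$ is a push-off of $B_j$ into a page. Hence each cap is a $4$-dimensional $2$-handle attached along a component of $\partial(W_1\cap W_2\cap W_3)=\partial\Sigma$ with page framing.

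I will not attach all $b$ of these directly, because the count has to come out to $b-1$ two-handles and a $4$-handle. Instead, attach the $2$-handles along $B_1,\dots,B_{b-1}$ only. The new boundary is the result of page-framed surgery on $b-1$ binding components. Capping $b-1$ holes of the planar page turns it into a disk, so the new boundary carries an open book with disk pages and trivial monodromy. That boundary is $S^3$, and the last binding component is the unknot bounding a disk page. Gluing in a $4$-ball then closes the manifold up, and the remaining cap $D^2_b\times D^2$ is exactly a neighborhood of the last disk region inside this $4$-handle.

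Finally I must check that the closed manifold $W\cup(b-1\ \text{2-handles})\cup D^4$ has the trisection given by $\rtdcap{\calD}$. The $D^4$ decomposes as an open book over $S^3$ coned off, so it is split into three sectors meeting in $D_b^2$. Each cap is split into three wedges $D^2_j\times(\text{sector of }D^2)$, which are adjoined to the $W_i$. The main obstacle is showing that $W_i$ together with these wedges is still a $1$-handlebody of genus $k-b+1$.

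To handle this, I would use the standard decomposition of $W_i$ in which $\partial W_i$ contains $P\times I$ pieces. Adding a $D^2\times I\times I$ along the boundary of a hole cancels one $1$-handle: the arc in $P$ joining hole $j$ to the outer boundary is dual to a $1$-handle, and capping the hole makes that handle cancel against the added $2$-cell region. This lowers the genus by one for each of the $b-1$ caps, and the final disk cap (the $D^4$ region) adds a ball without changing the genus. The pairwise intersections then become the handlebodies determined by the $\alpha$, $\beta$, $\gamma$ curves on $\rtdcap{\Sigma}$, which identifies the result with $X$.
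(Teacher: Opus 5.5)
Your proposal is correct and follows essentially the paper's argument. The paper's proof of Theorem~\ref{thm:cap-b>1} uses the same wedge decomposition, $\Sigma_{0,b-1}\times\Delta=(\Sigma_{0,b}\times\Delta)\cup(D\times\Delta)$, which gives $Z_{k-1}=Z_k\cup(D\times\Delta)$; the three wedges then form a $2$-handle attached along $c$ with the page framing you identify. Theorem~\ref{thm:cap-b=1} treats the last cap as gluing in the $(0,0;0,1)$-trisection of $D^4$, as you do.

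The differences are only organizational. The paper caps one boundary component at a time and works backwards: it decomposes the trisection of the capped diagram, which exists by Lemma~\ref{lem:rtdcap}, rather than building forward from $W$ and invoking uniqueness. Also, the product identity already gives the full model structure $U_{0,b-1}\bcs V_n$ on the enlarged sectors, boundary decomposition included. That is what you actually need to call the result a relative trisection, so your $1$-handle cancellation count is not required.
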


Furthermore, we explicitly determine the framings of these $2$-handles. For details, see the proof of Theorem~\ref{thm:cap-b>1}.
Since a relative trisection can be converted into a handle decomposition (see \cite{KimMil20}), Theorem~\ref{thm:cap-handle} gives a handle decomposition of the closed $4$-manifold corresponding to the capped diagram.


Finally, independently of the preceding arguments, we study the existence of minimal genus relative trisections of the same $4$-manifold with distinct parameters $(g,k;p,b)$, which are therefore non-diffeomorphic.

\begin{theorem}\label{thm:diff-gkpb}
There exists a $4$-manifold $W$ with boundary that admits two relative trisections $\calRT$ and $\calRT'$ satisfying the following conditions:
\begin{itemize}
\item
The genera of $\calRT$ and $\calRT'$ are minimal.
\item
$\calRT$ and $\calRT'$ have distinct parameters $(g,k;p,b)$. 
\end{itemize}
\end{theorem}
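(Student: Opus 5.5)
We will prove Theorem~\ref{thm:diff-gkpb} with an explicit example, and the natural candidate is $W=S^2\times D^2$, whose trisection genus is known from \cite{Tak24}. The idea is to take two relative trisections of the same genus whose open books on $\partial W=S^2\times S^1$ differ. One possibility is the page $S^2\times S^1$ with trivial monodromy, which gives $p=0,b=2$ with annulus pages. The other is a page of genus $1$ with one boundary component, giving $p=1,b=1$. Pulling such a page back through an appropriate relative trisection of $S^2\times D^2$ would produce a second parameter tuple. If this does not give matching genus, we fall back on $W=D^2\times S^2\,\natural\,\overline{\CC P^2}$-type examples or on $W=S^1\times D^3$-type manifolds, where the open books of $\#^n S^1\times S^2$ can have different $(p,b)$.

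The steps are as follows.

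First, we fix $W$ and compute a lower bound for the genus of any relative trisection of $W$. We use the standard inequalities. The first is $g\ge k$, together with the Euler characteristic formula
\[\chi(W)=g-3k+3p+2b-1.\]
The second is that $k$ is at least the rank of $\pi_1$ (or of $H_1$) data. The third is that $g\ge 2p+b-1$, since the page embeds in the central surface $\Sigma$, which is obtained from the page by adding genus. For example, $S^2\times D^2$ has $\chi=2$, and this together with $g\geq 2p+b-1$ and the $H_2$ rank bound gives $g\ge 2$. This lower bound is the trisection genus recorded in \cite{Tak24}.

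Second, we exhibit two explicit relative trisection diagrams of that minimal genus with different parameters. One is the diagram from \cite{Tak24} with $(g,k;p,b)=(2,1;0,2)$. For the other we try a diagram with $p=1,b=1$ and $g=2$. Euler characteristic then forces $2=2-3k+3+2-1$, so $k=4/3$, which is not an integer. So we instead look for changing $b$ with $p=0$. With $p=0$ the formula gives $g-3k+2b=3$, so $(g,k,b)=(2,1,2)$ and $(2,?,\cdot)$ have no other solution, and $S^2\times D^2$ alone fails.

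We therefore pick a manifold where the arithmetic allows two solutions at the minimal genus. Our main candidate is $W=\natural^2(S^2\times D^2)$ or $W=(S^2\times D^2)\natural(S^1\times D^3)$, and we search for $(g,k;p,b)$ satisfying $\chi$, $g\ge 2p+b-1$, $k\ge$ rank $H_1$, and $g-k\geq$ rank $H_2$-type bounds. Among these we want two tuples with the same minimal $g$ and different $(p,b)$. For instance, $W=S^1\times D^3$ has $\chi=0$, so $g-3k+3p+2b=1$. This gives $(1,1;0,1)$? Checking, $1-3+0+2=0\neq1$, so no. The genus-$0$ tuple $(0,0;0,\cdot)$ gives $2b=1$, also no. The correct genus-$0$ trisection of $S^1\times D^3$ has parameters $(0,1;0,2)$: $0-3+0+4=1$, which works. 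At genus $1$ we have, for example, $(1,1;1,1)$: $1-3+3+2=3$, which fails. Genus $0$ is minimal but forces $p=0$ and $b\le1$, so this example is also rigid.

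Hence the realistic plan is a computer-free parameter search over small boundary connected sums of $S^2\times D^2$, $S^1\times D^3$ and $\CC P^2-\Int D^4$. Once we find a $W$ with two admissible tuples at the minimal genus, we realize each tuple by boundary-connected-summing known small relative trisection diagrams (Appendix~A) and check that the gluing produces $W$.

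The main obstacle is realizability. Arithmetic admissibility does not guarantee that a diagram with the second tuple exists. We would first try to obtain the second tuple by a summation of two different decompositions of the same $W$: for instance, splitting $W$ as $A\natural B$ in two ways with pieces whose parameters add differently. The minimality would then follow from the lower bound established in the first step.
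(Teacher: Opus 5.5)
Your proposal never produces an example. No manifold $W$ is fixed, no pair of relative trisections is exhibited, and both realizability and minimality are deferred to a search that is not carried out.

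Moreover, the search space you settle on cannot contain an example. Suppose $\partial W\cong\#^n(S^1\times S^2)$. The induced open book gives a Heegaard splitting of genus $2p+b-1$, so $n\le 2p+b-1\le k$. Therefore
\[
\chi(W)=g-3k+3p+2b-1\le g-3p-b+2,
\quad\text{i.e.}\quad
g\ge \chi(W)+(2p+b-1)+p-1\ge \chi(W)+n-1,
\]
with equality only when $(k;p,b)=(n;0,n+1)$.

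Now take any boundary connected sum of copies of $S^2\times D^2$, $S^1\times D^3$ and $\pm\CC P^2-\Int D^4$. This bound is attained there, by summing the $(2,1;0,2)$, $(0,1;0,2)$ and $(1,0;0,1)$ diagrams. So the minimal-genus parameters are forced to be $(\chi(W)+n-1,\,n;\,0,\,n+1)$, and they are unique. Compare Lemma~\ref{lem:rt-pb=01}, which already rules out every $W$ with $\partial W\cong S^3$. Merely having open books with different pages on $\partial W$ is therefore not enough.

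Your inequalities also contain errors. The claim $g\ge 2p+b-1$ is false: your own $(0,1;0,2)$ trisection of $S^1\times D^3$ violates it, and the correct constraint is $k\ge 2p+b-1$. In addition, genus $0$ for $S^1\times D^3$ forces $b=2$, not $b\le 1$.

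What is missing is a manifold in which two different page types both realize the minimal genus. The paper takes the contractible manifold $W^-(0,1)$, whose trisection genus is $3$ by \cite{Tak25b}. Since $\chi=1$, the inequality $g\ge\chi(W)+3p+b-2$ gives exactly $g\ge 3$ both for $(p,b)=(0,4)$ and for $(p,b)=(1,1)$. The paper's construction then runs as follows:
\begin{enumerate}
\item It takes the $(3,3;0,4)$ diagram from \cite{Tak25b}.
\item It draws a new $(3,2;1,1)$ diagram.
\item It checks, by surface diffeomorphisms and handleslides, that each of the pairs $(\Sigma;\alpha,\beta)$, $(\Sigma;\beta,\gamma)$, $(\Sigma;\gamma,\alpha)$ of the new diagram is standard.
\item It identifies the resulting $4$-manifold with $W^-(0,1)$, using the algorithm of \cite{KimMil20} followed by handle moves.
\end{enumerate}
Your outline would need an analogous concrete $W$, two explicit diagrams verified in this way, and a proved genus lower bound before it establishes the theorem.
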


We explicitly give a pair of relative trisections for a contractible $4$-manifold that satisfies the conditions of Theorem~\ref{thm:diff-gkpb}.
To the best of our knowledge, this provides the first known example of such relative trisections.

\subsection*{Conventions}
All manifolds and surfaces are assumed to be compact, connected, oriented, and smooth.
All $4$-manifolds with boundary are assumed to have non-empty connected boundary.
If two smooth manifolds $X$ and $Y$ are orientation-preserving diffeomorphic, we write $X\cong Y$.
Let $\Sigma_g$ be a closed, connected, oriented surface of genus $g$, and let $\Sigma_{g,b}$ be a compact, connected, oriented surface of genus $g$ with $b$ boundary components.

\section{preliminaries}

\subsection{Trisections of closed $4$-manifolds}

In \cite{GayKir16}, trisections are defined using a model of a $4$-dimensional $1$-handlebody.
For integers $g$ and $k$ such that $0\leq k\leq g$, set $Z_{k}:=\natural^k(S^1\times D^3)$ and $Y_{k}:=\partial{Z_{k}}=\#^k(S^1\times S^2)$.
Let $Y_{k}=Y_{g,k}^-\cup Y_{g,k}^+$ be the standard genus-$g$ Heegaard splitting of $Y_{k}$.
Note that the connected sum of $S^1\times S^2$'s admits a unique Heegaard splitting in each genus (see Section~4 of \cite{Wald68}).

\begin{definition}[\cite{GayKir16}]\label{dfn:tris-model}
Let $X$ be a closed, connected, oriented, smooth $4$-manifold.
A decomposition $\calT:X=X_1\cup X_2\cup X_3$ is called a $(g,k)$-\textit{trisection} of $X$ if it satisfies the following conditions.
\begin{itemize}
\item
For each $i \in\{1,2,3\}$, there exists a diffeomorphism $\varphi_i:X_i\to Z_{k}$.
\item
For each $i \in\{1,2,3\}$, taking indices mod $3$,
\begin{equation*}
\varphi_i(X_i\cap X_{i+1})=Y^-_{g,k} \quad\textrm{and}\quad \varphi_i(X_i\cap X_{i-1})=Y^+_{g,k}.
\end{equation*}
\end{itemize}
\end{definition}

By Definition~\ref{dfn:tris-model}, if a decomposition $X=X_1\cup X_2\cup X_3$ is a $(g,k)$-trisection, then the following properties hold.
\begin{itemize}
\item
For each $i\in\{1,2,3\}$, the sector $X_i$ is diffeomorphic to the genus-$k$ $4$-dimensional $1$-handlebody $\natural^k(S^1\times D^3)$.
\item
For each $i,j\in\{1,2,3\}$ such that $i\neq j$, the double intersection $X_i\cap X_j(=\partial{X_i}\cap\partial{X_j})$ is diffeomorphic to the genus-$g$ $3$-dimensional $1$-handlebody $\natural^g(S^1\times D^2)$.
\item
The triple intersection $X_1\cap X_2\cap X_3$ is diffeomorphic to the genus-$g$ closed surface $\Sigma_{g}$.
\item
For each $i\in\{1,2,3\}$, we have $\partial{X_i}=(X_i\cap X_{i+1})\cup(X_i\cap X_{i-1})$, and this is a genus-$g$ Heegaard splitting of $\#^k(S^1\times S^2)$. 
\end{itemize}

Gay and Kirby proved that any closed, connected, oriented, smooth $4$-manifold admits a trisection \cite[Theorem~4]{GayKir16}.
As mentioned in the introduction, we henceforth assume that all closed $4$-manifolds are connected, oriented, and smooth.

\subsection{Relative trisections of $4$-manifolds with boundary}

In \cite{GayKir16}, Gay and Kirby also introduced the notion of a relative trisection for a $4$-manifold with connected boundary.
We now introduce the definition of a relative trisection rephrased by Castro, Gay, and Pinz\'{o}n-Caiced~\cite{CasGayPin18a}.
As in the closed case, the definition was given using a model $Z_{k}$ of a $4$-dimensional $1$-handlebody.
Let $g$, $k$, $p$, and $b$ be integers such that
\begin{align*}
{g,k,p\geq0,} \quad {b\geq1,} \quad\text{and}\quad {2p+b-1\leq k\leq g+p+b-1.}
\end{align*}
Remark that $g-p$, $g-k+p+b-1$, and $n:=k-2p-b+1$ are non-negative integers.
We will define $Z_{k}$ as a boundary connected sum of two $4$-manifolds $U_{p,b}$ and $V_{n}$.
We begin by constructing these constituent pieces.

First, we construct the $4$-manifold $U_{p,b}$ and give a decomposition of its boundary.
Let $\Delta$ be the third of the unit disk defined as
\begin{equation*}
\Delta := \left\{ re^{\sqrt{-1}\theta} \in \CC \mid {0\leq r \leq1}, \  {-\frac{\pi}{3} \leq \theta \leq \frac{\pi}{3}} \right\}.
\end{equation*}
We define a decomposition of the boundary $\partial{\Delta}=\partial^-\Delta\cup \partial^0\Delta\cup \partial^+\Delta$, where
\begin{align*}
\partial^\pm \Delta &:= \left\{ re^{\sqrt{-1}\theta} \in \Delta \mid {0\leq r \leq1}, \ {\theta=\pm\frac{\pi}{3}} \right\}, \\
\partial^0\Delta &:= \left\{ e^{\sqrt{-1}\theta} \in \Delta \mid {-\frac{\pi}{3} \leq \theta \leq \frac{\pi}{3}} \right\}.
\end{align*}
Set $P:=\Sigma_{p,b}$ and $U_{p,b} := P\times\Delta$.
We see $U_{p,b} \cong \natural^{2p+b-1}(S^1\times D^3)$.
Then, decompose the boundary of $U_{p,b}$ as $\partial{U_{p,b}}=\partial^-{U_{p,b}} \cup \partial^0U_{p,b} \cup \partial^+{U_{p,b}}$, where
\begin{align*}
\partial^\pm {U_{p,b}} := P\times\partial^\pm{\Delta}
\quad \textrm{and} \quad
\partial^0{{U_{p,b}}} := (P\times\partial^0{\Delta})\cup(\partial{P}\times\Delta).
\end{align*}

Next, we set $V_{n}:= \natural^{n}(S^1\times D^3)$, where $n=k-2p-b+1$.
Then, let $\partial{V_n} = \partial^-{V_{g,k;p,b}}\cup\partial^+{V_{g,k;p,b}}$ be a genus-$(g-p)$ Heegaard splitting of $\#^{n}(S^1\times S^2)$.

Finally, we define $Z_{k} := {U_{p,b}}\bcs {V_{n}}$, where the boundary connected sum is taken in neighborhoods of points in $\Int(\partial^-{U_{p,b}}\cap \partial^+{U_{p,b}})$ and $\partial^-{V_{g,k;p,b}}\cap \partial^+{V_{g,k;p,b}}$ (see Figure~\ref{fig:connsum-UV}).
\begin{figure}[!tbp]
\centering
\includegraphics[scale=0.7]{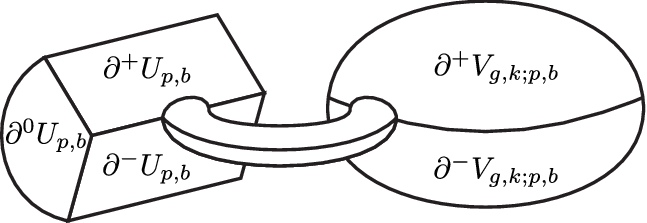}
\caption{The boundary connected sum  $Z_{k}={U_{p,b}} \bcs {V_{n}}$.}
\label{fig:connsum-UV}
\end{figure}
Set $Y_{k} :=\partial{Z_{k}}$, and give a decomposition
\begin{align*}
Y_{k} = Y^-_{g,k;p,b}\cup Y^0_{p,b}\cup Y^+_{g,k;p,b},
\end{align*}
where
\begin{align*}
 Y^\pm_{g,k;p,b} := \partial^{\pm}{U_{p,b}} \bcs \partial^{\pm}{{V_{g,k;p,b}}} \quad \textrm{and} \quad Y^0_{p,b} := \partial^0{U_{p,b}}.
\end{align*}
Then the following properties hold (see Section~3 and 4 of \cite{CasGayPin18a}).
\begin{itemize}
\item
The $4$-manifold $Z_{k}$ is diffeomorphic to $\natural^{k}(S^1\times D^3)$.
Thus, we have $Y_{k}=\partial{Z_{k}}\cong\#^k(S^1\times S^2)$.
\item
The $3$-manifolds $Y^\pm_{g,k;p,b}$ are obtained from $\Sigma_{g,b}\times[0,1]$ by attaching $3$-dimensional $2$-handles along $g-p$ disjoint simple closed curves on $\Sigma_{g,b}\times\{1\}$ to get $\Sigma_{p,b}$. That is, $Y^\pm_{g,k;p,b}$ are diffeomorphic to compression bodies from $\Sigma_{g,b}$ to $\Sigma_{p,b}$.
\item The $3$-manifold $Y^0_{p,b}$ can be identified with $P\times[-1,1]$.
\item The decomposition $Y^-_{g,k;p,b}\cup Y^+_{g,k;p,b}$ is a sutured Heegaard splitting of the $3$-manifold
\begin{equation*}
(P\times[-1,1])\#(\#^{n}S^1\times S^2),
\end{equation*}
where $n=k-2p-b+1$. The splitting surface $Y^-_{g,k;p,b}\cap Y^+_{g,k;p,b}$ is diffeomorphic to $\Sigma_{g,b}$.
\end{itemize}

We now proceed to the definition of a relative trisection.

\begin{definition}[\cite{CasGayPin18a}]\label{dfn:rt}
Let $W$ be a compact, connected, oriented, smooth $4$-manifold with connected boundary.
Let $g$, $k$, $p$, and $b$ be integers such that $g,k,p\geq0$, $b\geq1$, and $2p+b-1\leq k\leq g+p+b-1$.
A decomposition $\calRT:W=W_1\cup W_2\cup W_3$ is called a $(g,k;p,b)$-\textit{relative trisection} of $W$ if it satisfies the following conditions.
\begin{itemize}
\item
For each $i \in\{1,2,3\}$, there exists a diffeomorphism $\varphi_i:W_i\to Z_{k}$.
\item
For each $i \in\{1,2,3\}$, taking indices mod $3$,
\begin{equation*}
\varphi_i(W_i\cap W_{i\pm1})=Y^\mp_{g,k;p,b} \quad\textrm{and}\quad \varphi_{i}(W_i\cap \partial{W})=Y^0_{p,b}.
\end{equation*}
\end{itemize}
\end{definition}

By Definition~\ref{dfn:rt}, if a decomposition $W=W_1\cup W_2\cup W_3$ is a $(g,k;p,b)$-relative trisection, then the following properties hold.
\begin{itemize}
\item
For each $i\in\{1,2,3\}$, the sector $W_i$ is diffeomorphic to the genus-$k$ $4$-dimensional $1$-handlebody $\natural^k(S^1\times D^3)$.
\item
For each $i,j\in\{1,2,3\}$ such that $i\neq j$, the double intersection $W_i\cap W_j(=\partial{W_i}\cap\partial{W_j})$ is diffeomorphic to the $3$-manifold obtained from $\Sigma_{g,b}\times[0,1]$ by attaching $3$-dimensional $2$-handles along $g-p$ disjoint simple closed curves on $\Sigma_{g,b}\times\{1\}$ to get $\Sigma_{p,b}$.
That is, $W_i\cap W_j$ is diffeomorphic to compression bodies from $\Sigma_{g,b}$ to $\Sigma_{p,b}$. 
\item
The triple intersection $W_1\cap W_2\cap W_3$ is diffeomorphic to the surface $\Sigma_{g,b}$ of genus $g$ with $b$ boundary components.
\end{itemize}

\begin{theorem}[{\cite{GayKir16}}]\label{thm:rt-prop}
%
The following statements hold for compact, connected, oriented, smooth $4$-manifolds with connected boundary.
\begin{enumerate}
\item Any such $4$-manifold with boundary admits a relative trisection.
\item A $(g,k;p,b)$-relative trisection $\calRT$ of a $4$-manifold $W$ with boundary induces an open book on the boundary $\partial{W}$ with pages $\Sigma_{p,b}$.
\item For any $4$-manifold $W$ with boundary and open book $\calO$ on $\partial{W}$, there exists a relative trisection $\calRT$ of $W$ that induces $\calO$.
%
%
\end{enumerate}
\end{theorem}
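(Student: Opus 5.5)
The statement immediately above is Theorem~\ref{thm:rt-prop}, which is due to Gay and Kirby; the plan is to recall the structure of their argument rather than to build new machinery. The three parts all come from one construction, and I will do (2) first, then (3), and get (1) as a corollary of (3).

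For (2), I will read the open book directly off Definition~\ref{dfn:rt}. Each $\varphi_i$ identifies $W_i\cap\partial W$ with $Y^0_{p,b}=(P\times\partial^0\Delta)\cup(\partial P\times\Delta)$. Gluing the three pieces along the identifications coming from the adjacent sectors gives
\[
\partial W=\bigcup_{i}(P\times\partial^0\Delta)_i\;\cup\;(\partial P\times D^2).
\]
The first term is a mapping torus of $P$ over the circle formed by the three arcs $\partial^0\Delta$. The second term is a neighborhood of the binding $\partial P\times\{0\}$. The pages are $P\times\{\theta\}\cong\Sigma_{p,b}$, and the monodromy is the composite of the three gluing maps. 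The only checks are that these gluings are compatible on $P\times\partial^\pm\Delta$ and that the $\partial P\times\Delta$ parts assemble into $\partial P\times D^2$. Both follow from $W_i\cap W_{i\pm1}$ meeting $\partial W$ in $P\times\partial^\pm\Delta$.

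For (3), given an open book $\calO$ on $\partial W$ with page $P$, I plan to use Morse 2-functions as in Gay--Kirby. The first step is to take a collar $\partial W\times[0,1]$ and map it to a neighborhood of the boundary of the disk. Using the open book structure, it maps fiberwise to an annulus, with the binding going to the center and no critical points. Next, I extend this over the interior to a generic map $W\to D^2$. The third step is to apply Gay--Kirby's homotopy moves (eliminating cusps, merging folds, removing swallowtails, and so on) to reach an \emph{indefinite, trisection-friendly} map. That means the fold curves are concentric circles with cusps arranged in three symmetric groups, and the disk can be cut into three sectors, each of whose preimages is a $1$-handlebody with the model boundary structure $Y^\pm_{g,k;p,b}\cup Y^0_{p,b}$. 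Finally, I verify the sector preimages against $Z_k=U_{p,b}\natural V_n$. The $U_{p,b}$ part comes from the page times a third of the disk near the boundary, and the $V_n$ part comes from the interior $1$-handles.

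The main obstacle is this homotopy step in (3), namely making the Morse $2$-function indefinite with the right fold structure while keeping it fixed near $\partial W$. Definite folds must be removed relative to the boundary, and the interior must be arranged so that each sector is a $1$-handlebody rather than containing $2$-handles. I expect this to need Gay--Kirby's results on homotopies of Morse $2$-functions together with a relative version of Saeki's elimination of definite folds. An alternative I would keep in reserve is to start from a handle decomposition of $W$ built on a collar of $\partial W$ compatible with $\calO$, and to trisect it by hand using stabilizations.

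Part (1) then follows from (3) because every closed oriented $3$-manifold, in particular $\partial W$, admits an open book (Alexander).
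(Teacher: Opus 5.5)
Your outline is correct and follows the argument the paper relies on: the paper gives no proof of its own and simply cites Gay--Kirby, whose existence argument is the one you describe (the open book is read off from the model pieces, with pages $W_i\cap W_j\cap\partial W$ and binding $\partial(W_1\cap W_2\cap W_3)$; a Morse $2$-function is homotoped rel boundary to an indefinite, trisected one; and (1) follows from (3) by Alexander's theorem). Two small slips to fix: under $\varphi_i$ the set $W_i\cap W_{i+1}\cap\partial W$ corresponds to the page $(P\times\{e^{-\sqrt{-1}\pi/3}\})\cup(\partial P\times\partial^-\Delta)$, not to the $3$-dimensional piece $P\times\partial^{\pm}\Delta$; and in (3) only the collar of the mapping-torus part of $\partial W$ maps to an annulus near $\partial D^2$, while the binding neighborhood $\partial P\times D^2$ maps onto the whole disk by projection, so that every fiber is a surface whose boundary is a copy of the binding.
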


Henceforth, any $4$-manifold with boundary is assumed to have nonempty connected boundary.

\subsection{Trisection diagram}\label{sub:trisdiag}

A trisection is encoded by a diagram $(\Sigma; \alpha, \beta, \gamma)$, where $\Sigma$ is a surface and $\alpha$, $\beta$, $\gamma$ are collections of simple closed curves.
In this subsection, we recall the definitions of (relative) trisection diagrams, following Castro, Gay, and Pinz\'{o}n-Caicedo~\cite{CasGayPin18a}.
First, we define several equivalence relations for tuples of surfaces and collections of curves.

\begin{definition}
Let $\Sigma$ and $\Sigma'$ be compact, connected, oriented surfaces. For $i\in \{1,\ldots,n\}$, let $\eta^i=\{\eta^i_1,\ldots,\eta^i_k\}$ and $\zeta^i=\{\zeta^i_1,\ldots,\zeta^i_k\}$ be collections of $k$ pairwise disjoint simple closed curves on $\Sigma$ and $\Sigma'$, respectively.
Then, we consider two $(n+1)$-tuples $\calD:=(\Sigma;\eta^1,\ldots,\eta^n)$ and $\calD':=(\Sigma';\zeta^1,\ldots,\zeta^n)$.
\begin{itemize}
\item
Suppose $\Sigma=\Sigma'$. The two diagrams $\calD$ and $\calD'$ are \textit{isotopic} if for any $i\in\{1,\ldots,n\}$ there exists an ambient isotopy $\{\varphi_t^i:\Sigma\to\Sigma\}_{t\in[0,1]}$ such that $\varphi_1^i(\eta^i)=\zeta^i$.
\item
Suppose $\Sigma=\Sigma'$. The two diagrams $\calD$ and $\calD'$ are \textit{slide-equivalent} if for each $i\in\{1,\ldots,n\}$, $\eta^i$ and $\zeta^i$ are related by a sequence of handleslides and ambient isotopies.
\item
The diagrams $\calD$ and $\calD'$ are \textit{diffeomorphism and handleslide equivalent} if there exists a diffeomorphism $f:\Sigma\to \Sigma'$ such that $f(\calD)=(f(\Sigma);f(\eta^1),\ldots,f(\eta^n))$ and $\calD'$ are slide-equivalent.
\end{itemize}
\end{definition}

\begin{definition}\label{def:td}
Let $g$ and $k$ be integers such that $0\leq k \leq g$.
Let $\Sigma$ be a surface diffeomorphic to $\Sigma_{g}$, and let $\alpha=\{\alpha_1,\ldots,\alpha_g\}$, $\beta=\{\beta_1,\ldots,\beta_g\}$, and $\gamma=\{\gamma_1,\ldots,\gamma_g\}$ be collections of $g$ pairwise disjoint simple closed curves on $\Sigma$.
The $4$-tuple $\calD=(\Sigma;\alpha,\beta,\gamma)$ is called a $(g,k)$-\textit{trisection diagram} if $(\Sigma;\alpha,\beta)$, $(\Sigma;\beta,\gamma)$, and $(\Sigma;\gamma,\alpha)$ are diffeomorphism and handleslide equivalent to the standard diagram $(\Sigma_{g};\delta,\epsilon)$ shown in Figure \ref{fig:td-stdgk}, where the red curves denote $\delta=\{\delta_1,\ldots,\delta_g\}$ and the blue curves denote $\epsilon=\{\epsilon_1,\ldots,\epsilon_g\}$.
\end{definition}
\begin{figure}[!htbp]
\centering
\includegraphics[scale=0.7]{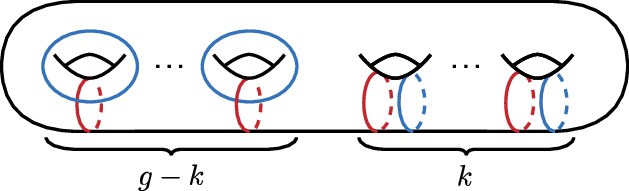}
\caption{The standard diagram $(\Sigma_{g,b};\delta,\epsilon)$.}
\label{fig:td-stdgk}
\end{figure}

Note that the diagram $(\Sigma_{g};\delta,\epsilon)$ in Figure~\ref{fig:td-stdgk} is the standard genus-$g$ Heegaard diagram of $\#^k(S^1\times S^2)$.

\begin{definition}\label{def:rtd}
Let $g$, $k$, $p$, and $b$ be integers such that $g,k,p\geq0$, $b\geq1$, and $2p+b-1\leq k\leq g+p+b-1$.
Let $\Sigma$ be a surface diffeomorphic to $\Sigma_{g,b}$, and let $\alpha=\{\alpha_1,\ldots,\alpha_{g-p}\}$, $\beta=\{\beta_1,\ldots,\beta_{g-p}\}$, and $\gamma=\{\gamma_1,\ldots,\gamma_{g-p}\}$ be collections of $g-p$ pairwise disjoint simple closed curves on $\Sigma$.
The $4$-tuple $\calD=(\Sigma;\alpha,\beta,\gamma)$ is called a $(g,k;p,b)$-\textit{relative trisection diagram} if $(\Sigma;\alpha,\beta)$, $(\Sigma;\beta,\gamma)$, and $(\Sigma;\gamma,\alpha)$ are diffeomorphism and handleslide equivalent to the standard diagram $(\Sigma_{g,b};\delta,\epsilon)$ shown in Figure \ref{fig:td-stdgkpb}, where the red curves denote $\delta=\{\delta_1,\ldots,\delta_{g-p}\}$ and the blue curves denote $\epsilon=\{\epsilon_1,\ldots,\epsilon_{g-p}\}$.
\end{definition}
\begin{figure}[!htbp]
\centering
\includegraphics[scale=0.7]{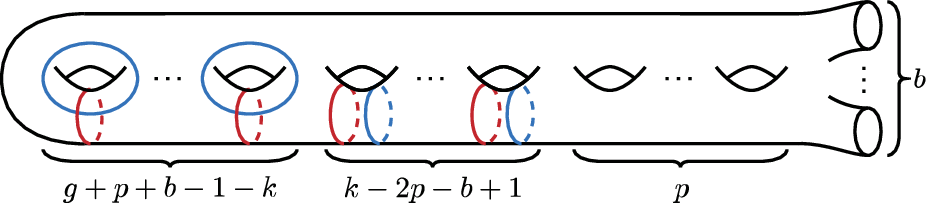}
\caption{The standard diagram $(\Sigma_{g,b};\delta,\epsilon)$.}
\label{fig:td-stdgkpb}
\end{figure}


For a (relative) trisection diagram $(\Sigma;\alpha,\beta,\gamma)$, we represent the $\alpha$, $\beta$, and $\gamma$ curves in red, blue, and green, respectively. See Figure~\ref{fig:td-gd} for an example.


There is a correspondence between trisections and trisection diagrams.

\begin{theorem}[{\cite{CasGayPin18a}}]\label{thm:rt-rtd}
The following statements hold.
\begin{enumerate}
\item
For any $(g,k)$- (or $(g,k;p,b)$-relative) trisection diagram $(\Sigma;\alpha,\beta,\gamma)$, there exists a unique (up to diffeomorphism) trisection $X=X_1\cup X_2\cup X_3$ satisfying the following conditions.
\begin{itemize}
\item
$X_1\cap X_2\cap X_3\cong \Sigma$.
\item
Under this identification, the $\alpha$-, $\beta$-, and $\gamma$-curves bound compressing disks in $X_3 \cap X_1$, $X_1 \cap X_2$, and $X_2 \cap X_3$, respectively.
\end{itemize}
\item
For any (relative) trisection $\calT$, there exists a (relative) trisection diagram $\calD$ such that $\calT$ is induced from $\calD$ by (i).
\item
Let $\calD$ and $\calD'$ be (relative) trisection diagrams.
If the induced (relative) trisections $\calT_{\calD}$ and $\calT_{\calD'}$ are diffeomorphic, then $\calD$ and $\calD'$ are diffeomorphism and handleslide equivalent.
\end{enumerate}
\end{theorem}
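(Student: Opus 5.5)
The plan is to pass between trisections and diagrams through the spine $\Sigma\cup(\text{three handlebodies or compression bodies})$, treating the closed and relative cases in parallel. For (i), given $(\Sigma;\alpha,\beta,\gamma)$, I would build the $4$-manifold in three steps.

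\begin{enumerate}
\item Start from $\Sigma\times D^2$, with $D^2$ split into three sectors whose boundary arcs are $\partial^\pm$-type pieces as in the model $U_{p,b}$.
\item Attach $4$-dimensional $2$-handles along $\alpha\times\{\theta_1\}$, $\beta\times\{\theta_2\}$ and $\gamma\times\{\theta_3\}$, using the surface framing, at three angles separating the sectors. Each pair of these handles, together with the sector between their angles, contributes to $\partial$ the $3$-dimensional piece $H_\alpha\cup_\Sigma H_\beta$ (and cyclically). Here $H_\alpha$ denotes the handlebody in the closed case, or the compression body from $\Sigma_{g,b}$ to $\Sigma_{p,b}$ in the relative case, determined by the curves.
\item By the hypothesis in Definitions~\ref{def:td} and \ref{def:rtd}, each pair $(\Sigma;\alpha,\beta)$ is standard. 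Hence each such boundary piece is $\#^k(S^1\times S^2)$ in the closed case. In the relative case, the same standard model shows that the remaining boundary is $Y^-\cup Y^+$ glued to a product $P\times[-1,1]$ coming from $\partial\Sigma\times D^2$ and the compressed pages. I then fill each of the three pieces with a copy of $Z_k$, matching the boundary decomposition of Definition~\ref{dfn:tris-model} or \ref{dfn:rt}.
\end{enumerate}

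Uniqueness of the filling, and hence of the trisection up to diffeomorphism, follows from Laudenbach--Po\'enaru: every self-diffeomorphism of $\#^k(S^1\times S^2)$ extends over $\natural^k(S^1\times D^3)$. Well-definedness of the pieces uses Waldhausen's uniqueness of Heegaard splittings of $\#^k(S^1\times S^2)$, as recalled before Definition~\ref{dfn:tris-model}.

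For (ii), starting from a trisection, I set $\Sigma=X_1\cap X_2\cap X_3$. I choose cut systems of compressing disks: $g$ disks in the closed case and $g-p$ in the relative case, cutting each double intersection $X_i\cap X_j$ into a ball, respectively into $\Sigma_{p,b}\times I$. Their boundaries give $\alpha,\beta,\gamma$. The pair $(\Sigma;\alpha,\beta)$ is then a Heegaard (sutured) diagram of $\partial X_1$ minus $Y^0$, which by the model is the standard splitting. So it is handleslide–diffeomorphism equivalent to Figure~\ref{fig:td-stdgk}, respectively Figure~\ref{fig:td-stdgkpb}. The diagram built from these curves recovers the trisection by the uniqueness in (i).

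For (iii), a diffeomorphism $f$ of trisections restricts to a diffeomorphism $\Sigma\to\Sigma'$ carrying each $X_i\cap X_j$ to $X'_i\cap X'_j$. It therefore sends a cut system for $H_\alpha$ to some cut system for $H_{\alpha'}$. The key input is the classical fact that any two cut systems of the same handlebody, or of the same compression body relative to its inner boundary, are related by handleslides and isotopy. I would prove this by the standard disk-surgery/outermost-arc argument, reducing intersections between two disk systems.

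I expect the relative case to be the main obstacle, in two places. First, the filling step must respect the piece $Y^0$ and the induced open book, so Laudenbach--Po\'enaru must be applied to a diffeomorphism fixing the product region $P\times[-1,1]$. Second, the handleslide statement must hold for compression bodies, where disks may not slide over the boundary $\partial\Sigma$. For the latter I would first try to check that the outermost-arc surgery never produces disks parallel into $\Sigma_{p,b}$ or crossing $\partial\Sigma$.
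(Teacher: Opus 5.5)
Your route is the one the paper sketches after the statement, with details deferred to \cite{CasGayPin18a}. Your handling of (ii) and (iii), via cut systems and the handleslide-equivalence of cut systems of handlebodies and compression bodies, is fine. The construction in (i), however, is wrong as written in the closed case. Since $\alpha$ is a full system of $g$ curves on a genus-$g$ surface, surgering $\Sigma\times\{\theta_1\}$ along it leaves a $2$-sphere. So, writing $N:=\Sigma\times D^2\cup(\text{$2$-handles})$, the part of $\partial N$ between the $\alpha$- and $\beta$-handles is $H_\alpha\cup_\Sigma H_\beta$ minus two open balls, and the three such parts are glued cyclically along three $2$-spheres. Thus $\partial N$ is connected (it is $\#^{3k+1}(S^1\times S^2)$), and there are no three pieces to fill separately with copies of $Z_k$. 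You must also attach three $4$-dimensional $3$-handles along these spheres (the thickened $3$-balls of $H_\alpha,H_\beta,H_\gamma$). Only then does the boundary split into three copies of $\#^k(S^1\times S^2)$, to which Laudenbach--Po\'enaru applies. This issue does not arise in the relative case, since surgering $\Sigma_{g,b}$ along $g-p$ curves leaves $\Sigma_{p,b}$.

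In the relative case, the difficulty you flag is real and your plan does not resolve it. First, the pieces $Y^0_i\cong P\times[-1,1]$ are not part of $\partial N$. Besides the three sutured pieces $M\cong Y^-\cup Y^+$, $\partial N$ contains only $\partial\Sigma\times D^2$; the $Y^0_i$ are the unglued parts of the copies of $Z_k$ attached along $Y^-\cup Y^+$. Uniqueness therefore requires that every self-diffeomorphism $h$ of $Y^-\cup Y^+$ preserving $Y^\pm$ extends over $Z_k$. Since $h$ need not be the identity on $\partial(Y^-\cup Y^+)$, you cannot simply apply Laudenbach--Po\'enaru to a map fixing $P\times[-1,1]$. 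The missing step is to first extend $h$ over the handlebody $Y^0$. The inclusions of the closed surface $\partial Y^0=\partial(Y^-\cup Y^+)$ into $Y^0$ and into $Y^-\cup Y^+$ have the same $\pi_1$-kernel: both factor through the projection to $P$ in the model $P\times\Delta$, and $\pi_1(P)$ injects into $\pi_1(Y^-\cup Y^+)\cong\pi_1(P)*F_n$. Hence, by Dehn's lemma, $h|_{\partial Y^0}$ carries meridians of $Y^0$ to meridians and extends over $Y^0$. The resulting diffeomorphism of $\partial Z_k\cong\#^k(S^1\times S^2)$ then extends over $Z_k$ by Laudenbach--Po\'enaru. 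This makes the filling independent of choices, and hence also the way the $Y^0_i$ assemble into the open book on the boundary.
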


The construction in (ii) proceeds as follows.
Given a trisection $X=X_1\cup X_2\cup X_3$, we set the central surface as $\Sigma:=X_1\cap X_2\cap X_3$. 
We then choose collections of simple closed curves $\alpha$, $\beta$, and $\gamma$ on $\Sigma$ forming complete disk systems of the handlebodies $X_3 \cap X_1$, $X_1 \cap X_2$, and $X_2 \cap X_3$, respectively.
The resulting $4$-tuple $(\Sigma;\alpha,\beta,\gamma)$ is then a trisection diagram.

Regarding (i), the desired trisected $4$-manifold is obtained by thickening the trisection diagram via taking the product with $D^{2}$, attaching $2$-handles along the curves $\alpha$, $\beta$, $\gamma$ as indicated, and then capping off with $1$-handlebodies.
The similar construction applies, with minor modifications, to the case of relative trisections.
For details, see \cite{CasGayPin18a}. 
For a trisection diagram $\calD$, we denote by $\calT(\calD)$ (resp. $\calRT(\calD)$ in the relative case) the trisection constructed in this way, and by $X(\calD)$ the underlying $4$-manifold.

Theorem~\ref{thm:rt-rtd} establishes the following one-to-one correspondence:
\begin{equation*}
\frac{\{\text{(relative) trisections}\}}{\text{diffeomorphism}}
\ \ {\stackrel{1:1}{\longleftrightarrow}} \ \ 
\frac{\{\text{(relative) trisection diagrams}\}}{\text{diffeomorphism and handleslide equivalent}}.
\end{equation*}

\section{Distinguishing relative trisections by the capping operation}

In \cite{Tak24}, the author proved the following result by using Theorem~\ref{thm:equiv-rtd-gd}.

\begin{theorem}\label{thm:nondiffeo-tak24}
The two $(2,1;0,2)$-relative trisection diagrams shown in Figure~\ref{fig:td-S2xD2} both represent $S^2\times D^2$.
These diagrams are not diffeomorphism and handleslide equivalent.
Consequently, the two $(2,1;0,2)$-relative trisections obtained by these diagrams are not diffeomorphic.
In addition, these relative trisections are of minimal genus and induce equivalent open books.
\end{theorem}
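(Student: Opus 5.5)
The argument has four parts: identify the underlying 4-manifold, separate the diagrams with Theorem~\ref{thm:equiv-rtd-gd}, show the genus is minimal, and compare the open books.

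\textbf{Underlying manifold.} I first check that both diagrams in Figure~\ref{fig:td-S2xD2} are $(2,1;0,2)$-relative trisection diagrams. This means checking that each of the three pairs is diffeomorphism and handleslide equivalent to the standard diagram of Figure~\ref{fig:td-stdgkpb} with $g=2$, $p=0$, $b=2$. Next I identify the underlying 4-manifold as $S^2\times D^2$. For this I use the algorithm of Kim--Miller~\cite{KimMil20}, which turns a relative trisection diagram into a Kirby diagram. The diagram has $k=1$, so the $\alpha$-pair gives a single $1$-handle. The $\gamma$-curves, framed by the surface, give $2$-handles. After handleslides and one cancellation of a $1$-handle/$2$-handle pair, the result should be a single $0$-framed unknot, which is $S^2\times D^2$.

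\textbf{Distinguishing the diagrams.} By Theorem~\ref{thm:equiv-rtd-gd}, capping each diagram gives a $(2,2-2+1)=(2,1)$-trisection diagram of a closed 4-manifold. It then suffices to show that $\rtdcap{\calD}$ and $\rtdcap{\calD'}$ are not diffeomorphism and handleslide equivalent. I would first try the crudest invariant, namely the closed 4-manifolds $X(\rtdcap{\calD})$ and $X(\rtdcap{\calD'})$ themselves. These can be read off from genus-$2$ diagrams by standard means: compute $\pi_1$ as $\pi_1(\Sigma)$ modulo $\alpha,\beta,\gamma$, and compute the intersection form from the $\alpha$--$\beta$--$\gamma$ homology. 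I expect them to differ, for instance $S^2\times S^2$ versus $\CC P^2\#\overline{\CC P^2}$, distinguished by parity of the intersection form. This is plausible because the capping attaches $b-1=1$ two-handle along a boundary component with some framing. If the two closed manifolds happen to coincide, I would fall back on comparing the $(2,1)$-trisections directly, using the classification of genus-$2$ trisections (Meier--Zupan), under which they would have to be standard and hence distinguishable.

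\textbf{Minimal genus.} I argue by excluding smaller genus. A relative trisection with $g\le 1$ and the boundary $S^2\times S^1$ must be enumerated. A genus-$0$ relative trisection gives a manifold $\natural^k(S^1\times D^3)$, and its boundary $\#^k(S^1\times S^2)$ can equal $S^2\times S^1$ only for $k=1$, where $H_2=0$. This is ruled out since $H_2(S^2\times D^2)=\mathbb{Z}$. For genus $1$, the relative trisection diagrams are few enough to list, and one checks that none yields $H_2\cong\mathbb{Z}$ with boundary $S^1\times S^2$. Alternatively, one can use the known bound that relative genus is at least $b_2$ plus a term coming from the open book on $S^1\times S^2$.

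\textbf{Open books.} Both diagrams have $p=0$ and $b=2$, so the pages are annuli. The monodromy, computed by the algorithm of Castro--Gay--Pinz\'on-Caicedo, is a power of the Dehn twist about the core. Since the boundary is $S^1\times S^2$, that power is $0$, so the two open books are equivalent.

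I expect the main obstacle to be the distinguishing step, specifically making the capped manifolds computably different.
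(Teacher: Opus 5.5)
Your separating step is the paper's argument. You cap both diagrams, identify the closed manifolds as $S^2\times S^2$ and $S^2\tilde{\times}S^2\cong\CC P^2\#\overline{\CC P^2}$, distinguish them by the parity of the intersection form, and invoke Theorem~\ref{thm:equiv-rtd-gd}. Your open book argument is also correct and needs no monodromy algorithm. Any monodromy of the annulus is $\tau^m$ for the core twist $\tau$, and the resulting $3$-manifold has $H_1\cong\mathbb{Z}/m\mathbb{Z}$. So $\partial W\cong S^1\times S^2$ forces $m=0$, and both open books are $(\Sigma_{0,2},\id)$, as the paper states.

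Three things need correcting.

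(1) Here $k=1$ and $b=2$, so the capped diagram is a $(g,k-b+1)=(2,0)$-trisection diagram, not $(2,1)$. Your own expected outcomes require this: a manifold with a $(2,k)$-trisection has $\chi=4-3k$, and $\chi(S^2\times S^2)=4$.

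(2) Your Meier--Zupan fallback runs the wrong way. Their theorem says each closed manifold admits a unique genus-$2$ trisection up to diffeomorphism. So if the two capped manifolds coincided, the capped diagrams would be equivalent, and capping would give no information at all. The argument genuinely rests on the capped manifolds being different, which is what the paper checks.

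(3) ``Listing'' genus-one diagrams is not an argument, since a priori there are infinitely many candidate curve systems on $\Sigma_{1,b}$. Use the Euler characteristic instead, as in the paper's proof of Theorem~\ref{thm:ndrt-inf} with $n=1$. The equation $g-3k+3p+2b-1=\chi(S^2\times D^2)=2$ with $g=1$, together with $p\le g$ and $2p+b-1\le k\le g+p+b-1$, forces $(g,k;p,b)=(1,0;0,1)$. That has disk pages, so the boundary would be $S^3$ rather than $S^1\times S^2$, a contradiction. Your genus-$0$ case is fine as written.
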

\begin{figure}[!htbp]
\centering
\includegraphics[scale=0.7]{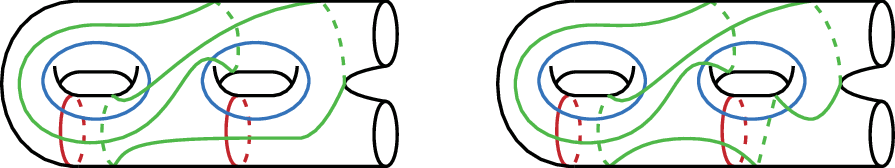}
\caption{$(2,1;0,2)$-relative trisection diagrams of $S^2\times D^2$.}
\label{fig:td-S2xD2}
\end{figure}

The closed trisection diagrams obtained by capping off the left and right diagrams in Figure~\ref{fig:td-S2xD2} represent $S^2 \times S^2$ and $S^2 \tilde{\times} S^2$, respectively.
Since these closed $4$-manifolds are not homeomorphic, it follows from Theorem~\ref{thm:equiv-rtd-gd} that the original relative trisection diagrams are not diffeomorphism and handleslide equivalent.
Using this method, we present additional examples of non-diffeomorphic relative trisections of the same $4$-manifold with boundary.

\subsection{Proofs of Theorems~\ref{thm:cl+1hdlbd} and \ref{thm:-D4}}

Starting from non-diffeomorphic trisections of a closed $4$-manifold, we describe a method for producing non-diffeomorphic relative trisections of its connected sum with a $1$-handlebody.

\begin{proof}[Proof of Theorem~\ref{thm:cl+1hdlbd}]
Let $\calT:X=X_1 \cup X_2 \cup X_3$ and $\calT':X=X'_1 \cup X'_2 \cup X'_3$ be non-diffeomorphic $(g,k)$-trisections of the closed $4$-manifold $X$.
Let $\calD = (\Sigma; \alpha, \beta, \gamma)$ and $\calD' = (\Sigma'; \alpha', \beta', \gamma')$ be $(g,k)$-trisection diagrams of $\calT$ and $\calT'$, respectively.
Here, $\Sigma := X_1 \cap X_2 \cap X_3$, and $\alpha$, $\beta$, $\gamma$ are meridian systems of the 3-dimensional 1-handlebodies $X_3 \cap X_1$, $X_1 \cap X_2$, $X_2 \cap X_3$, respectively.
The diagram $\calD'$ is defined similarly.

Let $\calT_{b}$ be the $(0,b-1;0,b)$-relative trisection of $\natural^{b-1}(S^1\times D^3)$ given by the diagram $(\Sigma_{0,b}; \emptyset, \emptyset, \emptyset)$.
It induces the open book $(\Sigma_{0,b}, \mathrm{id})$ on the boundary $\#^{b-1}(S^1\times S^2)$.
We consider the $(g,k+b-1;0,b)$-relative trisections $\calT\#\calT_{b}$ and $\calT'\#\calT_{b}$ of the $4$-manifold $X\#(\natural^{b-1}(S^1\times D^3))$.
(For details of the connected sum operation between a trisection of a closed $4$-manifold and a relative trisection of a $4$-manifold with boundary, see Appendix~A.)
These relative trisections also induce the open book $(\Sigma_{0,b}, \text{id})$.

In the following, we prove that $\calT \# \calT_{b}$ and $\calT' \# \calT_{b}$ are not diffeomorphic as trisections.
We consider the following connected sums of the diagrams:
\begin{align*}
\calD \# (\Sigma_{0,b}; \emptyset, \emptyset, \emptyset) &= (\Sigma \# \Sigma_{0,b}; \alpha, \beta, \gamma)
\quad \text{and} \\
\calD' \# (\Sigma_{0,b}; \emptyset, \emptyset, \emptyset) &= (\Sigma' \# \Sigma_{0,b}; \alpha', \beta', \gamma').
\end{align*}
Figure~\ref{fig:td-bcs-1hdbd} shows an example of a connected sum $(\Sigma;\alpha,\beta,\gamma)\#(\Sigma_{0,b}; \emptyset, \emptyset, \emptyset)$.
Here, the $(g, k+b-1 ; 0 ,b)$-relative trisection diagrams $\calD \# (\Sigma_{0,b}; \emptyset, \emptyset, \emptyset)$ and $\calD' \# (\Sigma_{0,b}; \emptyset, \emptyset, \emptyset)$ represent $\calT \# \calT_{b}$ and $\calT' \# \calT_{b}$, respectively.
\begin{figure}[!tbp]
\centering
\includegraphics[scale=0.7]{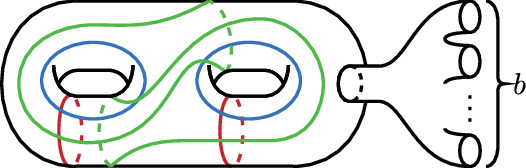}
\caption{An example of a connected sum $(\Sigma;\alpha,\beta,\gamma)\#(\Sigma_{0,b}; \emptyset, \emptyset, \emptyset)$.}
\label{fig:td-bcs-1hdbd}
\end{figure}
Then we see that
\begin{align*}
\rtdcap{\calD \# (\Sigma_{0,b}; \emptyset, \emptyset, \emptyset)})
&= (\rtdcap{\Sigma \# \Sigma_{0,b}}; \alpha, \beta, \gamma) \\
&= (\Sigma \# S^2; \alpha, \beta, \gamma) \\
&= (\Sigma; \alpha, \beta, \gamma) \\
&= \calD.
\end{align*}
Similarly, we have $\rtdcap{\calD' \# (\Sigma_{0,b}; \emptyset, \emptyset, \emptyset)} = \calD'$.

We argue by contradiction.
Assume that $\calT \# \calT_{b}$ and $\calT' \# \calT_{b}$ are diffeomorphic.
Then, by Theorem~\ref{thm:rt-rtd}, we see that $\calD \# (\Sigma_{0,b}; \emptyset, \emptyset, \emptyset)$ and $\calD' \# (\Sigma_{0,b}; \emptyset, \emptyset, \emptyset)$ are diffeomorphism and handleslide equivalent.
Hence, by Theorem~\ref{thm:equiv-rtd-gd}, we conclude that $\calD$ and $\calD'$ are diffeomorphism and handleslide equivalent.
Therefore, $\calT$ and $\calT'$ are diffeomorphic, which is a contradiction.
\end{proof}

From the case $n=1$ of Theorem~\ref{thm:cl+1hdlbd}, we deduce that if a closed $4$-manifold $X$ admits non-diffeomorphic $(g,k)$-trisections, then $X-\Int{D^4} (\cong X\#D^4)$ also admits non-diffeomorphic $(g,k;0,1)$-relative trisections.
Theorem~\ref{thm:-D4} is obtained by combining with Proposition~\ref{thm:tg-eq}.

\begin{proof}[Proof of Proposition~\ref{thm:tg-eq}]
First, we check that $g(X-\Int{D^4})\leq g(X)$.
This follows from the fact that a genus-$g(X)$ relative trisection of $X\#{D^4}$ can be obtained by taking the connected sum of a minimal genus trisection of $X$ with the $(0,0;0,1)$-relative trisection of $D^4$.

Next, we prove that $g(X)\leq g(X-\Int{D^4})$.
Set $g:=g(X-\Int{D^4})$.
We want to show that $g(X)\leq g$, which means $X$ admits a genus-$g$ closed trisection.
Let $\calRT$ be a $(g,k;p,b)$-relative trisection of $X-\Int{D^4}$ that realizes the minimal genus.
By Lemma~\ref{lem:rt-pb=01} described later, we have $p=0$ and $b=1$.
Hence, $\calRT$ induces the open book $(D^2,\id)$ on $S^3$.
Recall that the $(0,0;0,1)$-relative trisection of $D^4$ induces the same open book.
By \cite[Proposition~2.12]{CasOzb19}, the two relative trisections can be glued together.
As a result, the $4$-manifold $(X-\Int{D^4})\cup_{S^3}D^4 \cong X$ admits a $(g,k)$-trisection.
\end{proof}

\begin{lemma}\label{lem:rt-pb=01}
Let $W$ be a $4$-manifold with boundary $\partial{W}\cong S^3$.
Let $\calRT$ be a $(g,k;p,b)$-relative trisection of $W$.
If $g$ is equal to the trisection genus of $W$, then $p=0$ and $b=1$.
\end{lemma}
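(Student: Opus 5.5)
The plan is to argue by contradiction, playing the relative trisection off against the closed $4$-manifold $X := W\cup_{S^3}D^4$. This closed manifold satisfies $W\cong X-\Int{D^4}$.

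First, the first half of the proof of Proposition~\ref{thm:tg-eq} gives the upper bound
\begin{equation*}
g(W)\leq g(X),
\end{equation*}
by taking the connected sum of a minimal genus trisection of $X$ with the $(0,0;0,1)$-relative trisection of $D^4$. That argument uses only the connected sum construction of Appendix~A, not Lemma~\ref{lem:rt-pb=01}, so there is no circularity. Hence it suffices to prove the following claim: \emph{a $(g,k;p,b)$-relative trisection $\calRT$ of $W$ with $(p,b)\neq(0,1)$ produces a closed trisection of $X$ of genus at most $g-1$.} Granting the claim and $g=g(W)$, we get $g(X)\leq g-1<g(W)\leq g(X)$, which is a contradiction.

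To produce such a closed trisection, I will glue $\calRT$ to a suitable relative trisection of $D^4$ by \cite[Proposition~2.12]{CasOzb19}. The open book induced on $\partial W\cong S^3$ is $(\Sigma_{p,b},\phi)$. By Giroux--Goodman, it is related to the trivial open book $(D^2,\id)$ by Hopf (de)stabilizations. I will choose a relative trisection $\calRT_0$ of $D^4$ inducing the orientation-reversed open book, built as economically as possible: for instance, by realizing each Hopf plumbing by a single relative stabilization starting from the $(0,0;0,1)$-trisection. I will then glue $\calRT$ and $\calRT_0$. The central surface of the result is obtained by gluing $\Sigma_{g,b}$ to the central surface of $\calRT_0$ along the binding. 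Next, I will simplify the glued diagram: each Hopf band contributes a pair of curves, one of which is dual to a curve coming from $\calRT_0$, so it can be destabilized. Carrying this out should cancel all the genus added by $\calRT_0$ and by the gluing, and in addition remove at least one unit of genus coming from the page.

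This removal is also where the hypothesis $(p,b)\neq(0,1)$ enters. If $p>0$, a nonseparating curve of the page compresses in each of the three handlebodies after gluing, giving a standard destabilization. If $b>1$, an arc of the page joining two binding components closes up to a curve playing the same role; this is consistent with the $b-1$ extra $2$-handles appearing in Theorem~\ref{thm:cap-handle}.

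I expect the main obstacle to be the genus bookkeeping in the gluing step. The naive count for the glued central surface is $g+g_0+b-1$, which is larger than $g$. To overcome this, I must show that the pieces contributed by $\calRT_0$ destabilize completely and that at least one further destabilization is available. The first thing I would try is an explicit local model near each binding component and each page handle. In this model I would exhibit three curves, one from each of the $\alpha$-, $\beta$- and $\gamma$-systems, together with a common dual curve, which together form a standard genus-one summand. I would then check, via Euler characteristic formulas for trisections, that the resulting count of $k$ is consistent.
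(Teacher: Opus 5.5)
Your reduction is sound: the bound $g(W)\le g(X)$ does not use the lemma, and together with your claim it gives the contradiction. The gap is that the claim carries the entire content of the lemma (it is at least as strong), and you do not prove it. That the glued closed trisection of genus $g+g_0+b-1$ destabilizes to genus at most $g-1$ is only expected (``should cancel''). Moreover, the local mechanisms you propose do not work:

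\begin{itemize}
\item An essential curve of the page cannot bound a disk in a glued handlebody. Each glued handlebody is $C\cup_P C_0$, where $C$ and $C_0$ are compression bodies with the page $P$ as negative boundary. $\pi_1(P)$ injects into each of them, hence into the amalgamated product.
\item Even a nonseparating curve bounding disks in all three handlebodies would not help. It splits off the genus-one diagram $(T^2;c,c,c)$, i.e.\ an $S^1\times S^3$ summand of $X$, not a destabilization. A destabilizing genus-one summand has two parallel curves and a third curve meeting them once.
\item The closed-up arcs $a\cup\bar a$ are exactly the extra curves in the Castro--Gay--Pinz\'{o}n-Caicedo gluing. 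However, the arcs adapted to $\alpha$, $\beta$ and $\gamma$ in general differ by the slides that encode the monodromy, so no single such curve lies in all three systems.
\item Building your relative trisection of $D^4$ by ``one relative stabilization per Hopf plumbing'' presupposes that $(\Sigma_{p,b},\phi^{-1})$ is obtained from $(D^2,\id)$ by plumbings alone. Giroux--Goodman only gives a common stabilization, so deplumbings may be required.
\item Theorem~\ref{thm:cap-handle} is not relevant: capping attaches $2$-handles to $W$ and does not produce $X$. For $W=D^4$, $\rtdcap{\calD^{+}}$ is a diagram of $\CC P^2$, not $S^4$.
\end{itemize}

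The missing idea, which is how the paper argues, is to stay on $W$ and do parameter bookkeeping. By Giroux--Goodman, $(\Sigma_{p,b},\phi)$ and $(D^2,\id)$ are related by Hopf (de)stabilizations. By the argument from Castro's thesis cited in the paper, this sequence is realized by relative (de)stabilizations of $\calRT$, ending at a $(g',k';0,1)$-relative trisection $\calRT'$ of $W$. A Type~I move changes $(g,k;p,b)$ to $(g+1,k+1;p,b+1)$, and a Type~II move changes it to $(g+2,k+1;p+1,b-1)$. Both preserve $g-b-3p$, so $g'=g+1-b-3p$ regardless of how many moves occur. Minimality of $g$ gives $g'\ge g$, i.e.\ $1-b-3p\ge 0$, which forces $p=0$ and $b=1$. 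This also yields your claim at once: glue $\calRT'$ to the $(0,0;0,1)$-trisection of $D^4$ to get a trisection of $X$ of genus $g+1-b-3p\le g-1$. So neither the detour through $X$ nor any diagrammatic destabilization is needed.
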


\begin{proof}
Let $\calO(\calRT):=(\Sigma_{p,b},\phi)$ be the open book induced by $\calRT$.
Since both $\calO(\calRT)$ and $(D^2,\id)$ are open books of $S^3$, they are related by a sequence of Hopf stabilizations and destabilizations (see \cite[Theorem~1]{GirGoo06}).
%
Then there exists a $(g',k';0,1)$-relative trisection $\calRT'$ of $W$, with induced open book $\calO(\calRT')$ equivalent to $(D^2,\id)$, such that $\calRT$ and $\calRT'$ are related by a sequence of relative stabilizations and destabilizations (see the proof of Theorem~8 in \cite{Cas16}).
According to \cite[p.32]{Cas16}, there are two types of relative stabilizations.
We refer to them as Type~I and Type~II relative stabilizations (see Figure~\ref{fig:relstab}).
\begin{figure}[!tbp]
\centering
\includegraphics[scale=0.7]{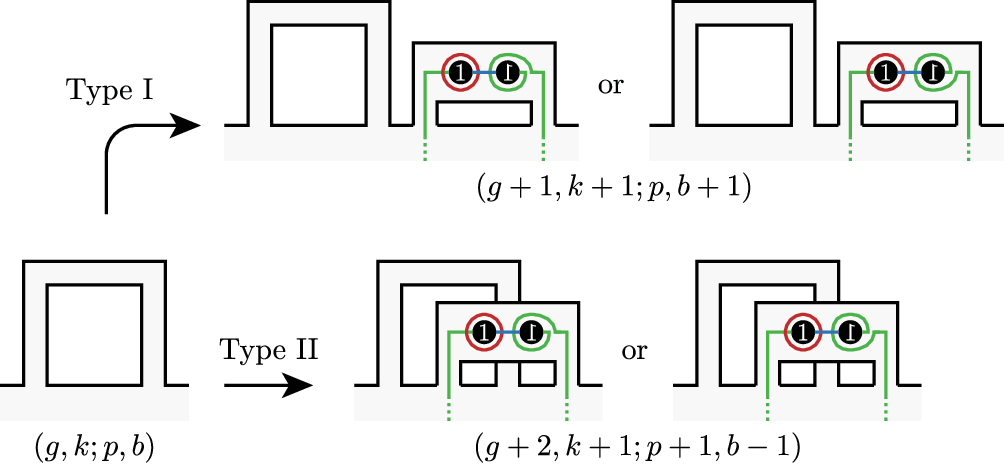}
\caption{Relative stabilizations.}
\label{fig:relstab}
\end{figure}
In a Type~I relative stabilization, the parameters $(g,k;p,b)$ change to $(g+1,k+1;p,b+1)$.
In a Type~II relative stabilization, the parameters $(g,k;p,b)$ change to $(g+2,k+1;p+1,b-1)$.

For the sequence of relative (de)stabilizations relating $\calRT$ and $\calRT'$, let us denote:
\begin{itemize}
    \item $l^+$: the number of Type~I relative stabilizations,
    \item $l^-$: the number of Type~I relative destabilizations,
    \item $m^+$: the number of Type~II relative stabilizations,
    \item $m^-$: the number of Type~II relative destabilizations.
\end{itemize}
Since the parameters of $\calRT$ are $(g,k;p,b)$ and those of $\calRT'$ are $(g',k';0,1)$, their differences are given by
\begin{align*}
g'-g &= (l^{+} + 2m^{+}) - (l^{-} + 2m^{-}), \\
k'-k &= (l^{+} + m^{+}) - (l^{-} + m^{-}), \\
0-p &= m^{+} - m^{-}, \\
1-b &= l^{+} - l^{-} + m^{-} - m^{+}.
\end{align*}
The difference in the genera of the relative trisections $\calRT$ and $\calRT'$ is computed as follows:
\begin{align*}
g'-g &= (l^{+} + 2m^{+}) - (l^{-} + 2m^{-}) \\
&= (l^{+} - l^{-} + m^{-} - m^{+}) + 3(m^{+} - m^{-}) \\
&= (1-b) + 3(0-p) \\
&= 1-b-3p.
\end{align*}
Since $g$ is the trisection genus of $W$, we have $g'-g\geq0$.
This implies that $1-b-3p \geq 0$.
Since $p \geq 0$ and $b \geq 1$ are required by the definition of a relative trisection, the only possible case is $p=0$ and $b=1$.
\end{proof}

\subsection{Proof of Theorem~\ref{thm:ndrt-inf}}\label{subsec:infinite}

In this subsection, we give examples of non-diffeomorphic relative trisections other than those discussed in the previous subsection.
That is, the underlying $4$-manifolds cannot be realized as the connected sum of a closed $4$-manifold and a $1$-handlebody.
We prove that the boundary connected sum of $n$ copies of $S^2 \times D^2$ satisfies the conditions of Theorem~\ref{thm:ndrt-inf}.

\begin{proof}[Proof of Theorem~\ref{thm:ndrt-inf}]
Let $\calD$ and $\calD'$ be the $(2,1;0,2)$-relative trisection diagrams of $S^2\times D^2$ shown on the left and right sides in Figure~\ref{fig:td-S2xD2}, respectively.
For a positive integer $n$, let $\natural^n\calD$ be the $(2n,n;0,n+1)$-relative trisection diagram obtained by taking the boundary connected sum of $n$ copies of $\calD$.
The diagram $\natural^n\calD'$ is defined similarly.
Both of these diagrams represent the $4$-manifold $\natural^{n}(S^2\times D^2)$.
In addition, since both $\calD$ and $\calD'$ induce the same open book $(\Sigma_{0,2},\id)$, the diagrams $\natural^n\calD$ and $\natural^n\calD'$ also induce the same open book $(\Sigma_{0,n+1},\id)$.

We now prove that the two relative trisection diagrams $\natural^n\calD$ and $\natural^n\calD'$ are not diffeomorphism and handleslide equivalent.
By considering the capped diagrams, we have:
\begin{align*}
X(\rtdcap{\natural^n\calD} \cong \#^n(S^2\times S^2)
\quad\text{and}\quad
X(\rtdcap{\natural^n\calD'} \cong \#^n(\CC{P^2}\#\CC{P^2}).
\end{align*}
%
Since these closed $4$-manifolds are not homeomorphic, it follows from Theorem~\ref{thm:equiv-rtd-gd} that $\natural^n\calD$ and $\natural^n\calD'$ are not diffeomorphism and handleslide equivalent.
Consequently, the corresponding $(2n,n;0,n+1)$-relative trisections $\calRT(\natural^n\calD)$ and $\calRT(\natural^n\calD')$ are not diffeomorphic.


%
Finally, we prove that $g(\natural^n(S^2 \times D^2))\geq2n$. 
%
%
For the sake of contradiction, assume that $\natural^n(S^2 \times D^2)$ admits a $(2n-1, k; p, b)$-relative trisection for some integers $k, p\geq0$ and $b\geq1$.
By Proposition~\ref{thm:rt-prop}, the boundary $\#^n(S^2 \times S^1)$ has an open book decomposition with page $\Sigma_{p,b}$.
Thus $\#^n(S^2 \times S^1)$ admits a Heegaard splitting of genus $2p+b-1$.
Since the Heegaard genus of $\#^n(S^2 \times S^1)$ is known to be $n$, we have the inequality:
\begin{equation}\label{eq:lower_n}
n \leq 2p + b - 1.
\end{equation}

On the other hand, by Corollary~2.10 in \cite{CasOzb19}, we obtain
\begin{equation*}
(2n - 1) - 3k + 3p + 2b - 1 = \chi(\natural^n(S^2 \times D^2)) = n+1, \end{equation*}
which simplifies to $k = p + b - 1 + {(n - b)}/{3}$.
Since $k$ must be an integer, there exists an integer $m$ such that $b=n-3m$.
Substituting this into the expression for $k$, we obtain $k = p - 1 + n-2m$.

By Definition~\ref{dfn:tris-model}, we have $2p+b-1\leq k$.
Substituting $b=n-3m$ and $k=p-1+n-2m$ into this inequality yields
\begin{equation*}
2p + (n - 3m) - 1 \leq p - 1 + n - 2m,
\end{equation*}
which simplifies to $p \leq m$.
Under this condition, we compute
\begin{align*}
2p+b-1 &= 2p + (n - 3m) - 1 \\
&\leq 2m +(n - 3m) - 1 \quad (\text{since } 0\leq p\leq m) \\
&= n-m-1 \\
&<n.
\end{align*}
This contradicts the inequality \eqref{eq:lower_n}. 
\end{proof}

\subsection{Proof of Theorem~\ref{thm:ndrt-Ac}}

In this subsection, we present non-diffeomorphic minimal genus relative trisections of the Akbulut cork $W^{-}(0,0)$, which is a contractible $4$-manifold given by the handlebody diagram in Figure~\ref{fig:kd-Ac}.
\begin{figure}[!tbp]
\centering
\includegraphics[scale=0.7]{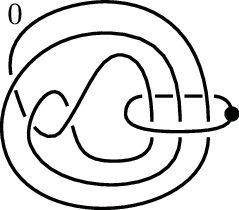}
\caption{A handlebody diagram of the Akbulut cork $W^{-}(0,0)$.}
\label{fig:kd-Ac}
\end{figure}

\begin{theorem}[\cite{Tak25b}]
Let $\calD_1$ and $\calD_2$ be the diagrams shown in Figure~\ref{fig:td-Ac}.
Then, they are $(3,3;0,4)$-relative trisection diagrams of the Akbulut cork.
Furthermore, the trisection genus of the Akbulut cork is $3$.
\end{theorem}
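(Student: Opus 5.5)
The statement has two parts, and I will prove them separately: (a) $\calD_1$ and $\calD_2$ are $(3,3;0,4)$-relative trisection diagrams of the Akbulut cork $W^{-}(0,0)$, and (b) $g(W^{-}(0,0))=3$.

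For (a), I first check the diagram conditions of Definition~\ref{def:rtd}. For each of the pairs $(\alpha,\beta)$, $(\beta,\gamma)$, $(\gamma,\alpha)$ on $\Sigma_{3,4}$, with $g-p=3$ curves each, I will exhibit handleslides and isotopies taking the pair to the standard diagram of Figure~\ref{fig:td-stdgkpb}. Here $k=3=2p+b-1$, so $n=0$ and there are no $S^1\times S^2$ summands. Concretely, each pair should become three dual pairs meeting once, possibly after sliding over curves parallel to boundary-separating arcs.

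To identify the underlying manifold, I will use the Kim--Miller algorithm \cite{KimMil20}, which converts a relative trisection diagram into a Kirby diagram. The $\alpha$ curves together with arcs on $\Sigma$ give the $1$-handles. The $\beta$-handlebody data give $2$-handles attached along the $\gamma$ curves, with surface framing, in the boundary of the $1$-handlebody determined by $(\alpha,\beta)$. Since $b=4$ and $p=0$, the construction gives $2p+b-1=3$ one-handles coming from the page. After cancelling the resulting $1$-/$2$-handle pairs, I expect to reach the two-component picture of Figure~\ref{fig:kd-Ac}: one dotted circle and one $0$-framed $2$-handle. I will do this for both $\calD_1$ and $\calD_2$.

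For (b), the upper bound $g(W^{-}(0,0))\le 3$ comes from part (a). For the lower bound, I suppose there is a $(g,k;p,b)$-relative trisection with $g\le 2$ and argue as in the proof of Theorem~\ref{thm:ndrt-inf}.
\begin{itemize}
\item The Euler characteristic formula of \cite[Corollary~2.10]{CasOzb19} with $\chi=1$ gives $g-3k+3p+2b-1=1$.
\item This is combined with the constraints $2p+b-1\le k\le g+p+b-1$ and $p\le g$.
\item The boundary $\partial W^{-}(0,0)$ is a Brieskorn-type homology sphere, $\Sigma(2,5,7)$, which is not $S^3$. Its Heegaard genus is $2$, and an open book with page $\Sigma_{p,b}$ gives a Heegaard splitting of genus $2p+b-1$, so $2p+b-1\ge 2$.
\end{itemize}
Enumerating the finitely many cases with $g\le 2$ should leave only a few parameter tuples, for example $(2,1;0,3)$ or $(2,2;1,1)$, or tuples ruled out by integrality.

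The main obstacle is excluding these remaining genus-$2$ cases. For them I would use the genus-$2$ classification: by Meier--Zupan, a genus-$2$ trisection diagram of a closed manifold is standard. After the argument by capping (Theorem~\ref{thm:cap-handle}), a $(2,k;0,b)$ relative diagram caps to a standard closed genus-$2$ trisection, and $X$ is obtained from $W$ by adding $2$-handles along binding components. This forces $W$ to embed in $S^4$, $\pm\CC P^2\#\pm\CC P^2$, or $S^2\times S^2$ with complement built from $2$- and $4$-handles. I would then use fundamental-group and homology constraints on the boundary to rule this out. For the case $p=1$, I would show that the binding structure forces the boundary to have Heegaard genus at most $2$ with a genus-one fibered knot structure, which contradicts known facts about $\Sigma(2,5,7)$. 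This last step is the one I am least sure of and would need to check most carefully.
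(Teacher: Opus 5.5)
The paper gives no proof of this statement; it only cites \cite{Tak25b}. Your part (a) (check the three pairs are standard, then run Kim--Miller and do handle moves) is the same verification the paper carries out for $W^{-}(0,1)$ in the proof of Theorem~\ref{thm:diff-gkpb}, and it is fine in outline. The gap is in the lower bound. There, the Euler-characteristic and Heegaard-genus argument from the proof of Theorem~\ref{thm:ndrt-inf} does not suffice.

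\textbf{Your case list is wrong.} From $\chi=1$ you get $3k=g+3p+2b-2$. The condition $k\ge 2p+b-1$ then gives $g\ge 3p+b-1=(2p+b-1)+p\ge 2+p$. So the only tuple with $g\le 2$ is $(2,2;0,3)$. Your examples are not admissible: $(2,1;0,3)$ violates $k\ge 2p+b-1$, and $(2,2;1,1)$ has $\chi=0$. No $p=1$ case arises, so your genus-one fibered knot step is moot.

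\textbf{Your plan for the surviving case cannot work.} Capping gives a $(2,0)$-trisection, so $X$ is $S^2\times S^2$ or $\pm\CC P^2\#\pm\CC P^2$ (never $S^4$), obtained from $W$ by two $2$-handles and a $4$-handle. But the Akbulut cork genuinely sits in $S^2\times S^2$ in exactly this way.
\begin{enumerate}
\item Replace the dotted circle by a $0$-framed unknot. This gives a simply connected $2$-handlebody $C$ with $\partial C=\partial W$.
\item Then $W\cup_\partial\overline{C}$ is $W$ plus $0$-framed meridians of both link components plus a $4$-handle.
\item The meridian of the dotted circle cancels it. Since the $2$-handle curve is unknotted in $S^3$, what remains is a $0$-framed Hopf link, i.e.\ $S^2\times S^2$.
\end{enumerate}
So no $\pi_1$ or homology constraint derived from such an embedding can produce a contradiction.

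\textbf{The missing idea is to use the induced open book itself, not only its Heegaard genus.}
\begin{enumerate}
\item For $(2,2;0,3)$ the page is a pair of pants. Its mapping class group rel boundary is generated by the three boundary-parallel twists, so the monodromy is $\tau_1^{n_1}\tau_2^{n_2}\tau_3^{n_3}$.
\item Hence $\partial W$ is $\Sigma_{0,3}\times S^1$ Dehn filled along slopes meeting the $S^1$-fibre $|n_i|$ times. That is, $\partial W$ is the Seifert fibered space $M(0;(n_1,1),(n_2,1),(n_3,1))$, or a lens space or reducible manifold if some $|n_i|\le 1$.
\item The Seifert fibration of $\Sigma(2,5,7)$ is unique, so you would need $\{|n_1|,|n_2|,|n_3|\}=\{2,5,7\}$.
\item But then $|H_1|=|n_1n_2+n_2n_3+n_3n_1|\in\{11,31,39,59\}$, never $1$.
\end{enumerate}
This excludes $(2,2;0,3)$ and gives $g\ge 3$.
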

\begin{figure}[!tbp]
\centering
\includegraphics[scale=0.7]{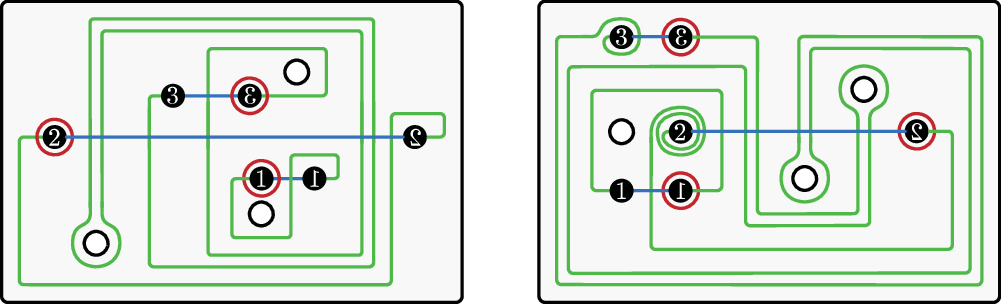}
\caption{Two $(3,3;0,4)$-relative trisection diagrams of the Akbulut cork. Left: $\calD_1$. Right: $\calD_2$.}
\label{fig:td-Ac}
\end{figure}

The next statement implies Theorem~\ref{thm:ndrt-Ac}. 

\begin{proposition}
The two $(3;3,0,4)$-relative trisection diagrams $\calD_1$ and $\calD_2$ of the Akbulut cork are not diffeomorphism and handleslide equivalent.
\end{proposition}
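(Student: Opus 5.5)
The argument follows the same pattern as the proofs of Theorems~\ref{thm:nondiffeo-tak24} and \ref{thm:ndrt-inf}: we cap off both diagrams and show that the resulting closed $4$-manifolds can be told apart. Since $\calD_1$ and $\calD_2$ are $(3,3;0,4)$-relative trisection diagrams with $p=0$, Theorem~\ref{thm:equiv-rtd-gd} applies. It tells us that $\rtdcap{\calD_1}$ and $\rtdcap{\calD_2}$ are $(3,0)$-trisection diagrams of closed $4$-manifolds, because $k-b+1=3-4+1=0$. It also tells us that if $\calD_1$ and $\calD_2$ were diffeomorphism and handleslide equivalent, then so would be $\rtdcap{\calD_1}$ and $\rtdcap{\calD_2}$. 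In particular, the closed manifolds $X(\rtdcap{\calD_1})$ and $X(\rtdcap{\calD_2})$ would be diffeomorphic. It therefore suffices to find an invariant of closed $4$-manifolds that differs between them.

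A $(3,0)$-trisection gives a simply connected closed $4$-manifold with $\chi=2+g-3k=5$, so $b_2=3$. The plan is as follows.
\begin{enumerate}
\item Compute the intersection form of each capped diagram directly from the curves. One route is the standard method: take the lattice $L_\alpha\cap(L_\beta+L_\gamma)$ in $H_1(\rtdcap{\Sigma})$, with the pairing coming from the algebraic intersection numbers of the curves. Alternatively, use Theorem~\ref{thm:cap-handle}: $X(\rtdcap{\calD_i})$ is obtained from the contractible Akbulut cork $W$ by attaching three $2$-handles along three components of $\partial(W_1\cap W_2\cap W_3)$, plus a $4$-handle. Since $H_2(W)=0$, the intersection form is the linking matrix of these three framed knots in $\partial W$, which is a homology sphere.
\item Identify each manifold. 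For a $(3,0)$-trisection it is natural to expect something like $\#^3\CC P^2$, $\#^2\CC P^2\#\overline{\CC P^2}$, or $(S^2\times S^2)\#\pm\CC P^2$. I expect the two forms to differ in signature, so that, for example, one capped diagram gives $\#^3\CC P^2$ and the other gives $\#^2\CC P^2\#\overline{\CC P^2}$. If the signatures happen to agree, the parity of the forms may differ instead, so that one form is even and the other odd, as in the $S^2\times D^2$ example.
\item Conclude. Since signature and type are homotopy invariants, the two closed manifolds are not homeomorphic. Hence $\rtdcap{\calD_1}$ and $\rtdcap{\calD_2}$ are not equivalent, and therefore, by Theorem~\ref{thm:equiv-rtd-gd}, neither are $\calD_1$ and $\calD_2$.
\end{enumerate}

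The main obstacle is step 1. This is a concrete computation from the pictures in Figure~\ref{fig:td-Ac}. One has to choose orientations of the curves, compute the algebraic intersection numbers among the $\alpha$, $\beta$ and $\gamma$ curves on the capped genus-$3$ surface, and extract the $3\times3$ form. I would first try the handle description from Theorem~\ref{thm:cap-handle}. There one reads off the three boundary components of $\Sigma_{3,4}$ as knots in the Kirby diagram of the cork, together with their page framings. The linking matrix then comes out of a small computation, and its determinant must be $\pm1$, which serves as a check. If the two diagrams differ only in how the boundary components sit, this approach should show directly that the framings have different parities or that the matrices have different signatures.
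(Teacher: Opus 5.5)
Your reduction is exactly the paper's: cap off, apply Theorem~\ref{thm:equiv-rtd-gd}, and show that the two closed $(3,0)$-trisected manifolds are not homeomorphic. However, the only step with real content is left as an expectation, so as written this is a plan rather than a proof.

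The paper carries that step out by simplifying the capped diagrams with surface diffeomorphisms and isotopies. After this, $\rtdcap{\calD_1}$ is the standard diagram of $3\overline{\CC P^2}$. The diagram $\rtdcap{\calD_2}$, turned into a Kirby diagram by Lemma~13 of~\cite{GayKir16}, gives $\CC P^2\#2\overline{\CC P^2}$. So the signatures are $-3$ and $-1$, and your criterion applies. Your sample identifications ($\#^3\CC P^2$ versus $\#^2\CC P^2\#\overline{\CC P^2}$) are the orientation reverses of these, so a guess cannot replace the computation.

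Your parity fallback can never occur. A unimodular form of rank $3$ has odd signature, so it cannot be even (even forms have $\sigma\equiv 0\pmod 8$). In fact every rank-$3$ unimodular form is diagonal. Signature is therefore the only invariant available here. Had the signatures agreed, the capped manifolds would be homeomorphic, and this homeomorphism-level argument would yield nothing.

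Your linking-matrix route through Theorem~\ref{thm:cap-handle} is sound, and it differs from the paper's diagram recognition. By the proof of Theorem~\ref{thm:cap-b>1}, the three $2$-handles are attached along binding components $c_1,c_2,c_3$ with the page framing. Let $c_i'$ be a parallel copy of $c_i$ in one page. It is disjoint from another page, which is a Seifert surface for the whole binding $c_1\cup\dots\cup c_4$. Hence $\mathrm{lk}(c_i',c_1\cup\dots\cup c_4)=0$, and the framing of $c_i$ equals $-\sum_{j\neq i}\mathrm{lk}(c_i,c_j)$. Because $W$ is contractible, the intersection form is the matrix with these diagonal entries and off-diagonal entries $\mathrm{lk}(c_i,c_j)$, computed in the homology sphere $\partial W$. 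In particular, it depends only on the induced open book.

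This buys something the paper's route does not. Combined with the paper's $-3\neq-1$, it already shows that $\calD_1$ and $\calD_2$ induce inequivalent open books. The paper leaves this open in the Remark after the proof. The cost of computing the form this way is that you must locate the four binding components in a Kirby diagram of the cork (e.g.\ via~\cite{KimMil20}) and compute their linking numbers. The paper only has to recognize two closed diagrams.
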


\begin{proof}
The capped diagrams $\rtdcap{\calD_1}$ and $\rtdcap{\calD_2}$ can be modified into the diagrams shown in Figure~\ref{fig:td-Ac-cap} by surface diffeomorphisms and isotopies of the curves.
\begin{figure}[!tbp]
\centering
\includegraphics[scale=0.7]{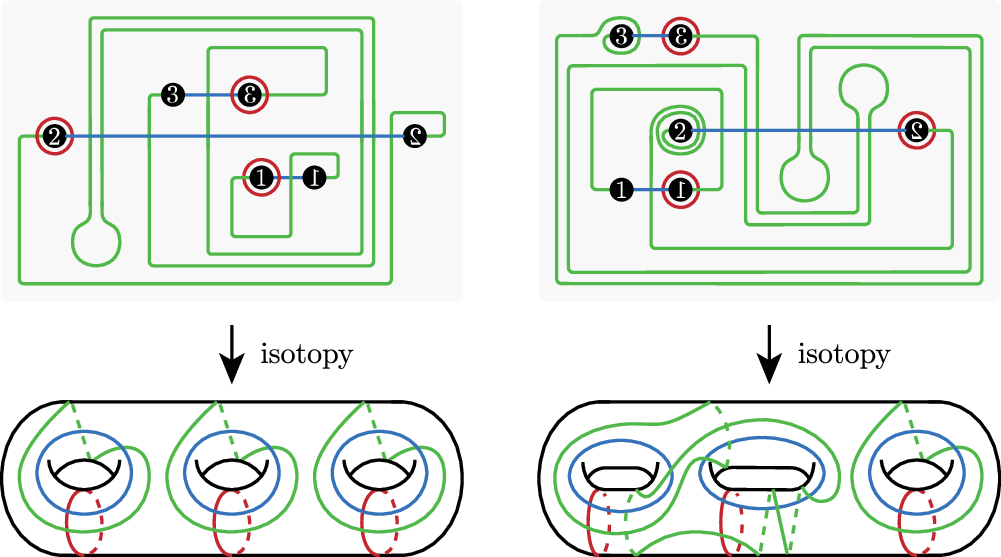}
\caption{Modifications of $\rtdcap{\calD_1}$ and $\rtdcap{\calD_2}$.}
\label{fig:td-Ac-cap}
\end{figure}
The left diagram $\rtdcap{\calD_1}$ is the standard trisection diagram of $3\overline{\CC{P^2}}$.
%
By Lemma~13 in \cite{GayKir16}, we see that the right diagram $\rtdcap{\calD_2}$ represents the $4$-manifold given by the handlebody diagram in Figure~\ref{fig:Kd-S2xS2-CP2}.
\begin{figure}[!tbp]
\centering
\includegraphics[scale=0.7]{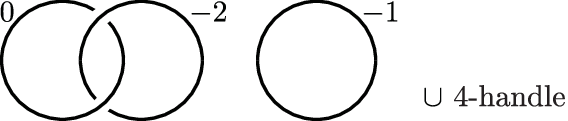}
\caption{The handlebody diagram induced from $\rtdcap{\calD_2}$.}
\label{fig:Kd-S2xS2-CP2}
\end{figure}
This $4$-manifold is diffeomorphic to $\CC{P^2}\#2\overline{\CC{P^2}}$.
%
Since $X(\rtdcap{\calD_1}) \not\cong X(\rtdcap{\calD_2})$, it follows from Theorem~\ref{thm:equiv-rtd-gd} that $\calD_1$ and $\calD_2$ are not diffeomorphism and handleslide equivalent.
\end{proof}

\begin{remark}
Another possible approach to prove that $\calD_{1}$ and $\calD_{2}$ are not equivalent is to compare the induced open books.
An algorithm for computing the monodromy of the open book induced by a relative trisection diagram is given in \cite[Theorem~5]{CasGayPin18a}.
However, the procedure requires performing very complicated modifications.
At present, we have not yet determined the monodromy of the open books induced by $\calD_{1}$ and $\calD_{2}$.
\end{remark}

\subsection{Proof of Theorem~\ref{thm:non-min-genus}}

In this subsection, we prove that any $4$-manifold with boundary admits a pair of non-diffeomorphic relative trisections.
As a preparation for the proof, we first consider two relative trisections of the $4$-ball $D^4$.
Let $\calD^{+}$ and $\calD^{-}$ be the $(1,1;0,2)$-relative trisection diagrams of $D^4$ shown in Figures~\ref{fig:td-B4+} and \ref{fig:td-B4-}, respectively.
\begin{figure}[!tbp]
  \begin{minipage}[b]{0.49\linewidth}
    \centering
    \includegraphics[scale=0.7]{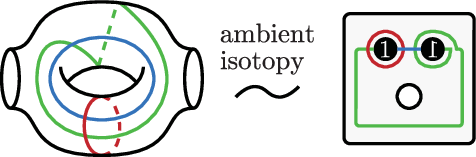}
    \caption{$\calD^{+}$}
    \label{fig:td-B4+}
  \end{minipage}
  \begin{minipage}[b]{0.49\linewidth}
    \centering
    \includegraphics[scale=0.7]{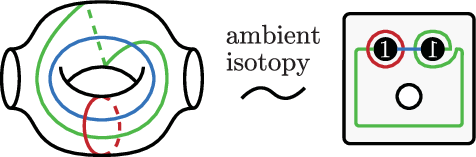}
    \caption{$\calD^{-}$}
    \label{fig:td-B4-}
  \end{minipage}
\end{figure}

\begin{lemma}
The $(1,1;0,2)$-relative trisection diagrams $\calD^{+}$ and $\calD^{-}$ are not diffeomorphism and handleslide equivalent.
Thus, the induced $(1,1;0,2)$-relative trisections $\calRT(\calD^+)$ and $\calRT(\calD^-)$ of $D^4$ are not diffeomorphic.
\end{lemma}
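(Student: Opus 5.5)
We will distinguish $\calD^{+}$ and $\calD^{-}$ by the capping operation of Definition~\ref{dfn:gdo}, as in the proof of Theorem~\ref{thm:nondiffeo-tak24}. Both diagrams have parameters $(1,1;0,2)$, so $p=0$ and Theorem~\ref{thm:equiv-rtd-gd} applies. Hence $\rtdcap{\calD^{\pm}}$ are $(1,1-2+1)=(1,0)$-trisection diagrams of closed $4$-manifolds. By Theorem~\ref{thm:equiv-rtd-gd}, if $\calD^{+}$ and $\calD^{-}$ were diffeomorphism and handleslide equivalent, then so would be $\rtdcap{\calD^{+}}$ and $\rtdcap{\calD^{-}}$. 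By Theorem~\ref{thm:rt-rtd}, the trisected closed manifolds $X(\rtdcap{\calD^{+}})$ and $X(\rtdcap{\calD^{-}})$ would then be orientation-preservingly diffeomorphic. So it suffices to show that these two closed manifolds are not orientation-preservingly diffeomorphic.

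The key step is to identify the capped diagrams. Each $\rtdcap{\calD^{\pm}}$ is a genus-one diagram on the torus with one $\alpha$, one $\beta$ and one $\gamma$ curve. Since $k=0$, the curves pairwise intersect geometrically once. Up to diffeomorphism of the torus there are exactly two such configurations: the standard diagrams of $\CC P^2$ and of $\overline{\CC P^2}$, which differ by the cyclic order (the sign of the triple of slopes). By the naming, I expect $\rtdcap{\calD^{+}}$ to be the standard diagram of $\CC P^2$ and $\rtdcap{\calD^{-}}$ that of $\overline{\CC P^2}$. I would verify this directly from the figures: cap the two boundary circles of $\Sigma_{1,2}$, isotope the curves on the resulting torus, and read off their slopes, for instance $\alpha=(1,0)$, $\beta=(0,1)$, $\gamma=(1,\pm1)$. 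Alternatively, I would apply Lemma~13 of \cite{GayKir16} to get a Kirby diagram, namely a single unknot with framing $\pm1$.

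Since $\CC P^2$ and $\overline{\CC P^2}$ have signatures $1$ and $-1$, there is no orientation-preserving diffeomorphism between them. This is exactly the obstruction we need, because diffeomorphisms of trisections are orientation-preserving by convention. This gives the non-equivalence of the diagrams, and Theorem~\ref{thm:rt-rtd}(iii) then yields that $\calRT(\calD^{+})$ and $\calRT(\calD^{-})$ are not diffeomorphic.

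The main obstacle is the careful identification of the capped curves, and in particular their orientation and handedness. The two diagrams are mirror images, so any sign slip would swap the answers. The non-equivalence argument, however, only needs the two capped manifolds to have opposite signatures. So it is enough to compute, in a consistent orientation convention, the sign of the framing of the $2$-handle (equivalently, the algebraic intersection pattern of the three slopes) for each capped diagram and check that the signs differ.
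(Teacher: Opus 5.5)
Your proposal is correct and follows the paper's proof. The paper simply observes that $\rtdcap{\calD^{\pm}}$ are the $(1,0)$-trisection diagrams of $\pm\CC P^2$ and then applies Theorem~\ref{thm:equiv-rtd-gd}; your signature remark supplies the orientation-sensitive reason these two capped manifolds cannot be equivalent, which the paper does not spell out. The paper also mentions an alternative you did not use: both induced open books have an annulus page, with monodromy a right-handed Dehn twist for $\calD^{+}$ and a left-handed Dehn twist for $\calD^{-}$.
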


\begin{proof}
We easily see that $\rtdcap{\calD^{\pm}}$ are $(1,0)$-trisection diagrams of $\pm\CC{P^2}$.
By Theorem~\ref{thm:equiv-rtd-gd}, it follows that $\calD^{+}$ and $\calD^{-}$ are not diffeomorphism and handleslide equivalent.

Alternatively, one may compare the open books induced by the respective relative trisection diagrams.
In both cases, the page is an annulus $\Sigma_{0,2}$.
The monodromy induced by $\calD^{+}$ is a right-handed Dehn twist, whereas that induced by $\calD^{-}$ is a left-handed Dehn twist.
\end{proof}

For a relative trisection diagram $\calD$, by taking the boundary connected sum with $\calD^{+}$ and $\calD^{-}$, we can obtain non-diffeomorphic relative trisections.

\begin{proof}[Peoof of Theorem~\ref{thm:non-min-genus}]
It is known that any closed $3$-manifold admits a planar open book decomposition (see \cite[Theorem~10K1]{Rol76}).
%
Thus, the boundary $\partial{W}$ admits a planar open book $(\Sigma_{0,b}, \phi)$.
By (iii) of Theorem~\ref{thm:rt-prop}, there exists a $(g,k;0,b)$-relative trisection diagram $\calD=(\Sigma;\alpha,\beta,\gamma)$ of $W$ that induces the planar open book $(\Sigma_{0,b}, \phi)$.

We consider the boundary connected sums $\calD \bcs \calD^{\pm}$ along an arc $c\subset\partial{\Sigma}$ (see Figures~\ref{fig:td-glue-D4+} and \ref{fig:td-glue-D4-}).
\begin{figure}[!tbp]
  \begin{minipage}[b]{0.49\linewidth}
    \centering
    \includegraphics[scale=0.7]{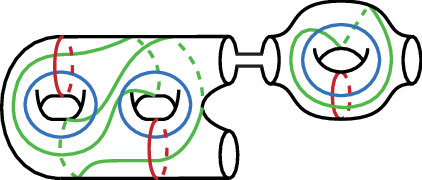}
    \caption{$\calD \bcs \calD^{+}$}
    \label{fig:td-glue-D4+}
  \end{minipage}
  \begin{minipage}[b]{0.49\linewidth}
    \centering
    \includegraphics[scale=0.7]{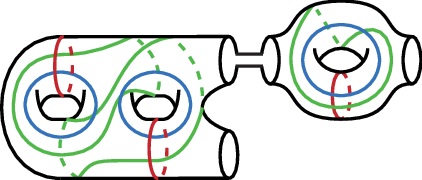}
    \caption{$\calD \bcs \calD^{-}$}
    \label{fig:td-glue-D4-}
  \end{minipage}
\end{figure}
The resulting diagrams $\calD \bcs \calD^{\pm}$ are $(g+1,k;0,b+1)$-relative trisection diagrams of $W\bcs D^4\cong W$.
We now prove that the induced relative trisections $\calRT(\calD \bcs \calD^{+})$ and $\calRT(\calD \bcs \calD^{-})$ are not diffeomorphic.
It can be observed from Figure~\ref{fig:td+cp2-gd} that
\begin{equation*}
\rtdcap{ \calD \bcs \calD^{\pm} } = \rtdcap{\calD} \# \calD_{\pm\CC{P^2}},
\end{equation*}
where $\calD_{\pm\CC{P^2}}$ denotes the standard $(1,0)$-trisection diagram of $\pm\CC{P^2}$.
\begin{figure}[!tbp]
\centering
\includegraphics[scale=0.7]{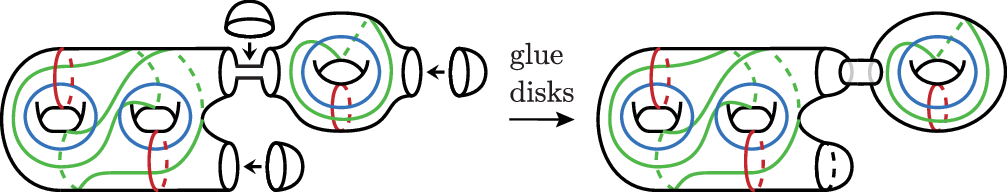}
\caption{$\rtdcap{ \calD \bcs \calD^{+} } = \rtdcap{\calD} \# \calD_{\CC{P^2}}$.}
\label{fig:td+cp2-gd}
\end{figure}
Therefore, the corresponding closed $4$-manifolds satisfy
\begin{equation*}
X( \rtdcap{\calD \bcs \calD^{\pm}}) \cong X( \rtdcap{\calD})\#(\pm\CC{P^2}).
\end{equation*}
Since $X(\rtdcap{\calD})\#\CC{P^2}$ and $X(\rtdcap{\calD})\#(-\CC{P^2})$ are not homeomorphic, Theorem~\ref{thm:equiv-rtd-gd} implies that the diagrams $\calD \bcs \calD^{+}$ and $\calD \bcs \calD^{-}$ are not  diffeomorphism and handleslide equivalent.
Hence, the relative trisections $\calRT(\calD \bcs \calD^{+})$ and $\calRT(\calD \bcs \calD^{-})$ are not diffeomorphic.
\end{proof}

\section{A geometric meaning of the capping operation}

In this section, we study how the underlying trisected $4$-manifold changes when disks are glued to the boundary of a relative trisection diagram.

\begin{lemma}\label{lem:rtdcap}
Let $\calD = (\Sigma; \alpha, \beta, \gamma)$ be a $(g, k; 0, b)$-relative trisection diagram.
\begin{itemize}
\item
Suppose that $b\geq2$.
Let $c \subset \partial \Sigma$ be a boundary component of the surface $\Sigma$.
Let $\calD' := (\Sigma\cup_{c}D^2; \alpha, \beta, \gamma)$ be the diagram obtained by gluing the disk $D^2$ to $\Sigma$ along $c$.
Then $\calD'$ is a $(g, k-1; 0, b-1)$-relative trisection diagram.
\item
Suppose that $b=1$.
Let $\calD' := (\Sigma\cup_{\partial}D^2; \alpha, \beta, \gamma)$ be the diagram obtained by gluing the disk $D^2$ to $\Sigma$ along the circle $\partial{\Sigma}$.
Then $\calD'$ is a $(g, k)$-trisection diagram.
\end{itemize}
\end{lemma}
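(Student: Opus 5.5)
The plan is to check Definition~\ref{def:rtd} (resp.\ Definition~\ref{def:td}) pairwise. We must show that each of $(\Sigma';\alpha,\beta)$, $(\Sigma';\beta,\gamma)$, $(\Sigma';\gamma,\alpha)$ is diffeomorphism and handleslide equivalent to the appropriate standard diagram, where $\Sigma'=\Sigma\cup_c D^2$. The key point is that every such equivalence on $\Sigma$ survives the capping. A diffeomorphism of $\Sigma$ preserving the boundary component $c$ extends over the capping disk, and a handleslide or isotopy on $\Sigma$ is still one on $\Sigma'$. There is one subtlety: a diffeomorphism $f\colon\Sigma\to\Sigma_{g,b}$ may send $c$ to any boundary component of $\Sigma_{g,b}$. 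Since $p=0$, however, the standard diagram of Figure~\ref{fig:td-stdgkpb} is symmetric under permuting the boundary components. Concretely, $\Sigma_{g,b}$ is a genus-$g$ surface with $b$ holes, and all $g$ curves of $\delta$ and of $\epsilon$ lie in the genus part. So we may compose $f$ with a diffeomorphism of the standard diagram permuting the boundary circles, and arrange that $f(c)$ is a chosen standard component $c_0$. Then $f$ extends to $\Sigma'\to\Sigma_{g,b}\cup_{c_0}D^2\cong\Sigma_{g,b-1}$, and the slide-equivalence transports.

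It remains to identify the capped standard diagram $(\Sigma_{g,b}\cup_{c_0}D^2;\delta,\epsilon)$. For parameters $(g,k;0,b)$ the standard pair is a Heegaard diagram of the sutured manifold $(\Sigma_{0,b}\times[-1,1])\#(\#^{n}S^1\times S^2)$ with $n=k-b+1$. Equivalently, among the $g$ pairs $(\delta_i,\epsilon_i)$, $n$ pairs are parallel and the remaining $g-n$ pairs intersect once. Capping one boundary component does not touch any curve. Hence the result is exactly the standard diagram for $(g,k';0,b-1)$ with the same $n$, so that $k'-(b-1)+1=n$, giving $k'=k-1$. The constraints $2p+b'-1\le k'\le g+p+b'-1$ are inherited from the original ones. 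When $b=1$, the capped surface is $\Sigma_g$, and the diagram with $n=k$ parallel pairs and $g-k$ dual pairs is the standard $(g,k)$ diagram of Figure~\ref{fig:td-stdgk}. This yields the second bullet.

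The main obstacle is making the description of the standard relative diagram precise. We need that for $p=0$ the $\delta,\epsilon$ curves consist of $n$ parallel pairs and $g-n$ geometrically dual pairs, all supported away from the boundary. This is read off from Figure~\ref{fig:td-stdgkpb} together with the description of $Y^\pm_{g,k;p,b}$ as compression bodies. We also need the extension and permutation argument to be valid for each of the three pairs separately; since the definition requires equivalence pairwise with independent diffeomorphisms, this is fine. Finally, the slide-equivalence on $\Sigma$ uses isotopies fixing $\partial\Sigma$ setwise, and these extend over the capping disk.
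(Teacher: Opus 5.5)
Your proposal is correct and follows essentially the same route as the argument the paper defers to (the proof of Lemma~3.1 in \cite{Tak24}). Each pair $(\Sigma;\alpha,\beta)$, $(\Sigma;\beta,\gamma)$, $(\Sigma;\gamma,\alpha)$ is checked separately: the diffeomorphisms, handleslides and isotopies extend over the capping disk, and capping the standard $(g,k;0,b)$ diagram yields the standard $(g,k-1;0,b-1)$ diagram, or the standard $(g,k)$ diagram when $b=1$. You also correctly handle the question of which boundary component $f(c)$ lands on, using the boundary-permuting symmetry of the standard diagram; capping a single component needs this detail, while capping all components at once does not.
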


The proof follows from the argument in the proof of Lemma~3.1 in \cite{Tak24}.

\begin{remark}
By Lemma~\ref{lem:rtdcap}, when the genus $p$ of the induced open book is $0$, the diagram $\rtdcap{\calD}$ obtained by capping off the all boundary components is always a trisection diagram of a closed $4$-manifold.
On the other hand, if $p\neq0$, the resulting diagram $\rtdcap{\calD}$ cannot be a trisection diagram.
Let $\calD$ be the $(2,3;1,2)$-relative trisection diagram shown in the left of Figure~\ref{fig:gdo-pnot0}.
\begin{figure}[!tbp]
\centering
\includegraphics[scale=0.6]{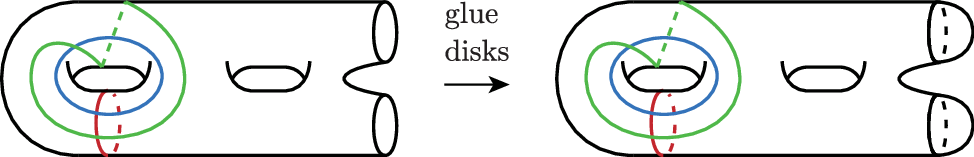}
\caption{The case of $p\neq0$.}
\label{fig:gdo-pnot0}
\end{figure}
By Theorem~\ref{thm:rt-prop}, $\calD$ induces an open book with page $\Sigma_{1,2}$, i.e. $p=1$.
The capped diagram $\rtdcap{\calD}$ is not a trisection diagram, since the number of components of each curve family is insufficient.
\end{remark}

For the case $b \ge 2$, we obtain the following.

\begin{theorem}\label{thm:cap-b>1}
Let $\calD = (\Sigma; \alpha, \beta, \gamma)$ be a $(g, k; 0, b)$-relative trisection diagram with $b \ge 2$.
Let $c \subset \partial \Sigma$ be a boundary component of the surface $\Sigma$.
Let $\calD' := (\Sigma \cup_{c} D; \alpha, \beta, \gamma)$ be the $(g, k-1; 0, b-1)$-relative trisection diagram obtained by gluing a disk $D$ to $\Sigma$ along $c$.
Let $\calRT(\calD): W = W_1 \cup W_2 \cup W_3$ be the $(g, k; 0, b)$-relative trisection induced by $\calD$, and let $\calRT(\calD'): W' = W_1' \cup W_2' \cup W_3'$ be the $(g, k-1; 0, b-1)$-relative trisection induced by $\calD'$.
Then the $4$-manifold $W'$ is obtained from $W$ by attaching a $2$-handle $h^{(2)}$ along the circle $c \subset \partial(W_{1} \cap W_{2} \cap W_{3})$. 
\end{theorem}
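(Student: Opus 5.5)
We compare the constructions of $W=X(\calD)$ and $W'=X(\calD')$ from their diagrams, using the recipe recalled after Theorem~\ref{thm:rt-rtd}. That recipe thickens the surface, attaches $2$-handles along the $\alpha$-, $\beta$- and $\gamma$-curves, and caps off with $1$-handlebodies. We then isolate the difference coming from the extra disk $D$. Concretely, $W$ is built as follows:
\begin{equation*}
W = \bigl(\Sigma\times D^2\bigr)\cup(\text{$2$-handles along } \alpha\times\{e^{\sqrt{-1}\theta_1}\},\ \beta\times\{e^{\sqrt{-1}\theta_2}\},\ \gamma\times\{e^{\sqrt{-1}\theta_3}\})\cup(\text{relative $1$-handlebody pieces}).
\end{equation*}
Here the last pieces are the compression bodies $\times I$ in the relative setting. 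Since $p=0$ for both $\calD$ and $\calD'$, these pieces are products of compression bodies from $\Sigma_{g,b}$ (resp.\ $\Sigma_{g,b-1}$) to planar surfaces $\Sigma_{0,b}$ (resp.\ $\Sigma_{0,b-1}$). The same description holds for $W'$ with $\Sigma\cup_c D$ in place of $\Sigma$. Because the curves are unchanged, the $2$-handle attachments are literally identical. So $W'$ is $W$ with the region $D\times D^2$ inserted, together with the corresponding change in the sector pieces near $c$.

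The key step is to show that the difference between the two manifolds is a single $2$-handle attached along $c$. We will do this in the local model $U_{0,b}=\Sigma_{0,b}\times\Delta$ of each sector. Capping $c$ turns each $U_{0,b}=P\times\Delta$ into $U_{0,b-1}=(P\cup_c D)\times\Delta$. Globally, gluing in $D\times\Delta$ for the three sectors, together with $D\times D^2$ along the central surface, amounts to attaching $D\times D^2$ to $W$ along $\partial D\times D^2=c\times D^2$. This $c\times D^2$ is a neighborhood of $c$ in $\partial W$, namely the binding-component neighborhood. Hence it is exactly a $2$-handle $h^{(2)}=D\times D^2$ attached along $c\subset\partial(W_1\cap W_2\cap W_3)$.

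The framing is the one induced by the product structure, i.e.\ the page framing of the binding component $c$. We will record this, since Theorem~\ref{thm:cap-handle} refers to it. We then check that the new decomposition $W\cup h^{(2)}=W'_1\cup W'_2\cup W'_3$ has $W'_i=W_i\cup(D\times\Delta_i)$. This is a boundary connected sum of $W_i$ with $D^2\times\Delta\cong D^4$ along $c\times\Delta$, a $3$-ball. So $W'_i$ is again a $1$-handlebody, and its genus must be $k-1$: gluing $D\times\Delta$ fills the $1$-handle of $P\times\Delta$ dual to $c$. This identifies the resulting decomposition with $\calRT(\calD')$, using the uniqueness in Theorem~\ref{thm:rt-rtd}(i) together with Lemma~\ref{lem:rtdcap}.

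The main obstacle is making the collar bookkeeping rigorous. In the Castro--Gay--Pinz\'on-Caicedo construction, $\partial\Sigma\times D^2$ is not simply part of $\partial W$: near $\partial\Sigma$ the sectors are $P\times\Delta$, and the boundary $\partial P\times\Delta$ lies in $\partial W$. I would first fix an explicit model in which the neighborhood of $c$ in $W$ is $[0,1)\times c\times D^2$, with $D^2=\Delta_1\cup\Delta_2\cup\Delta_3$, and $\{0\}\times c\times D^2\subset\partial W$. Then I would verify directly that adjoining $D\times D^2$ along $c\times D^2$ reproduces the model for $\Sigma\cup_c D$. Finally, I would confirm that the attaching circle $c\times\{0\}$ is isotopic in $\partial W$ to the boundary component $c$ of the central surface.
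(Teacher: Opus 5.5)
Your strategy is the paper's argument run in the opposite direction. The paper starts from $W'$ and uses the model maps $\psi_i\colon W_i'\to Z_{k-1}$ together with the identity $U_{0,b-1}=U_{0,b}\cup_{c\times\Delta}(D\times\Delta)$. From this it writes $W_i'=\psi_i^{-1}(Z_k)\cup h^{(2)}_i$, identifies $\bigcup_i\psi_i^{-1}(Z_k)$ with $\calRT(\calD)$ by uniqueness of the trisection determined by a diagram, and sets $h^{(2)}=\bigcup_i h^{(2)}_i\cong D\times D^2$. You instead enlarge the sectors of $W$ by $D\times\Delta$ and identify the result with $\calRT(\calD')$ by the same uniqueness. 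Both directions work. Your page/product framing agrees with the paper's ``$c\times\{1\}$'' framing. The collar compatibility you flag, needed so that the three pieces $D\times\Delta$ assemble into $D\times D^2$, is equally needed in the paper's direction, where it is only asserted.

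One step, however, is wrong as written. $c\times\Delta\cong S^1\times D^2$ is a solid torus, not a $3$-ball. So gluing $D\times\Delta$ to $W_i$ along it is not a boundary connected sum with $D^4$. It is a $4$-dimensional $2$-handle attachment with core $D\times\{\mathrm{pt}\}$, which is exactly how the paper describes $\psi_i(D)\times\Delta$ (``a $2$-handle attached to $Z_k$''). A boundary sum with $D^4$ along a $3$-ball would not change the diffeomorphism type of $W_i$ and would leave its genus at $k$, contradicting your own next clause.

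The correct justification is the product identity you already wrote down. Extend $\varphi_i$ over $D\times\Delta$ by the product map. This gives $W_i\cup(D\times\Delta)\cong((P\cup_c D)\times\Delta)\bcs V_n=U_{0,b-1}\bcs V_n=Z_{k-1}$, with $n=k-b+1$ unchanged. The same identity carries $Y^\pm_{g,k;0,b}$ and $Y^0_{0,b}$ to $Y^\pm_{g,k-1;0,b-1}$ and $Y^0_{0,b-1}$, so the enlarged decomposition really is a relative trisection with diagram $\calD'$. The drop in genus comes from this $2$-handle cancelling a $1$-handle of $P\times\Delta$, which is where $b\ge 2$ is used.
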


\begin{proof}
Since $\calRT(\calD') : W' = W_1' \cup W_2' \cup W_3'$ is a $(g, k-1; 0, b-1)$-relative trisection, Definition~\ref{dfn:rt} implies that for each $i \in \{1,2,3\}$ there exists a diffeomorphism
\begin{equation*}
\psi_i : W_i' \to Z_{k-1} = U_{0,b-1} \bcs V_n,
\end{equation*}
where $n = k - 2p - b + 1$.
The model $Z_{k-1}$ is given as in the left picture of Figure~\ref{fig:ZkZk-1}.
\begin{figure}[!tbp]
\centering
\includegraphics[scale=0.7]{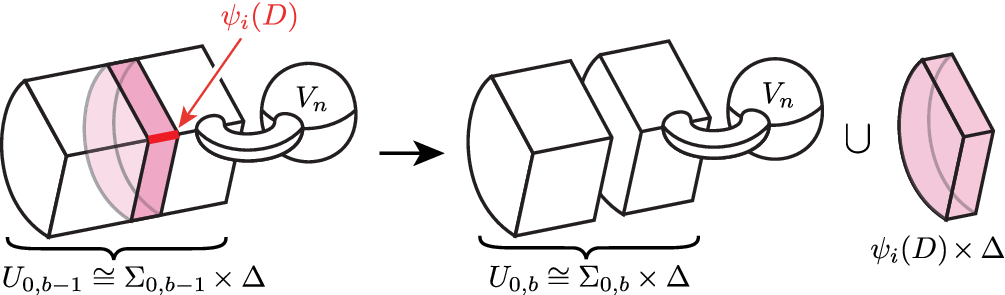}
\caption{$Z_{k-1} = Z_{k} \cup \left( \psi_i(D) \times \Delta \right)$.}
\label{fig:ZkZk-1}
\end{figure}
Since $\Sigma\cup_{c}D = W'_1 \cap W'_2 \cap W'_3$, we see that the disk $\psi_i(D)$ is embedded in $(Y^-_{g,k-1;0,b-1}\cap Y^+_{g,k-1;0,b-1})\cap U_{0,b-1} \cong\Sigma_{0,b-1}$.
Thus, $\psi_i(D)\times\Delta$ is a codimension $0$ submanifold of $U_{0,b-1}$ (see Figure~\ref{fig:ZkZk-1}).
Furthermore, we obtain the following decomposition:
\begin{equation*}
Z_{k-1} = Z_{k} \cup \left( \psi_i(D) \times \Delta \right).
\end{equation*}
This is derived as follows.
\begin{align*}
U_{0,b-1} &= \Sigma_{0,b-1} \times \Delta \\
&= \left( \Sigma_{0,b} \cup_{\psi_i(c)} \psi_{i}(D) \right) \times \Delta \\
&= \left( \Sigma_{0,b} \times \Delta \right) \cup_{\psi_i(c)\times\Delta} \left( \psi_i(D)\times\Delta \right) \\
&= U_{0,b} \cup_{\psi_i(c)\times\Delta} \left( \psi_i(D)\times \Delta \right)
\end{align*}
Therefore,
\begin{align*}
Z_{k-1} &= V_n \bcs U_{0,b-1} \\
&= V_n \bcs \left( U_{0,b} \cup_{ \psi_i(c)\times\Delta } \left( \psi_i(D)\times\Delta \right) \right) \\
&= \left( V_n \bcs U_{0,b} \right) \cup_{\psi_i(c)\times\Delta } \left( \psi_i(D)\times\Delta  \right) \\
&= Z_{k} \cup_{\psi_i(c)\times\Delta} \left( \psi_i(D)\times\Delta  \right)
\end{align*}
Note that $\psi_i(D)\times\Delta$ is a $2$-handle attached to $Z_k$.
Thus, we can regard $Z_k$ as a submanifold of $Z_{k-1}$, and hence consider the restrictions
\begin{equation*}
\psi_i^{-1}\big|_{Z_k} : Z_k \to \psi_i^{-1}(Z_k) \subset X_i.
\end{equation*}
The union of these images
\begin{equation*}
\psi_1^{-1}(Z_k) \cup \psi_2^{-1}(Z_k) \cup \psi_3^{-1}(Z_k)
\end{equation*}
is a $(g,k;0,b)$-relative trisection inducing $\calD = (\Sigma; \alpha, \beta, \gamma)$.
Thus, we can identify this decomposition with $\calRT(\calD) : W = W_1 \cup W_2 \cup W_3$.

Next, we set $h^{(2)}_i := \psi^{-1}_{i}(\psi_{i}(D)\times\Delta)$.
Then, we see that
\begin{align*}
W'_i &= \psi_i^{-1}(Z_{k-1}) \\
&= \psi_i^{-1}\left(Z_k \cup (\psi_{i}(D)\times\Delta) \right) \\
&= \psi_i^{-1}(Z_k) \cup \psi_i^{-1}\left(\psi_{i}(D)\times\Delta) \right) \\
&= \psi_i^{-1}(Z_k) \cup h^{(2)}_i.
\end{align*}
Finally, we define the $2$-handle $h^{(2)}$ as the union of these components:
\begin{align*}
h^{(2)} :=& h^{(2)}_1 \cup h^{(2)}_2 \cup h^{(2)}_3 \\
=&\, \psi^{-1}_{1}(\psi_{1}(D)\times\Delta) \cup \psi^{-1}_{2}(\psi_{2}(D)\times\Delta) \cup \psi^{-1}_{3}(\psi_{3}(D)\times\Delta).
\end{align*}
This decomposition is a $(0,0;0,1)$-relative trisection of $D^2\times D^2$, and the corresponding diagram is $(D;\emptyset,\emptyset,\emptyset)$.
Then we see that $W'=W \cup h^{(2)}$, since
\begin{align*}
W' &= W'_1 \cup W'_2 \cup W'_3 \\
&= \left(\psi_1^{-1}(Z_k) \cup h^{(2)}_1 \right) \cup \left(\psi_2^{-1}(Z_k) \cup h^{(2)}_2 \right) \cup \left(\psi_3^{-1}(Z_k) \cup h^{(2)}_3 \right) \\
&= \left(\psi_1^{-1}(Z_k) \cup \psi_2^{-1}(Z_k) \cup \psi_3^{-1}(Z_k) \right) \cup \left(h^{(2)}_1 \cup h^{(2)}_2 \cup h^{(2)}_3 \right) \\
&= (W_1 \cup W_2 \cup W_3) \cup h^{(2)} \\
&= W \cup h^{(2)}.
\end{align*}
See Figure~\ref{fig:glue2hdl}, for a schematic picture.
The attaching region of $h^{(2)}$ is the union of the attaching regions of $h_1^{(2)}$, $h_2^{(2)}$, and $h_3^{(2)}$, that is, 
\begin{align*}
\psi^{-1}_{1}(\psi_{1}(c)\times\Delta) \cup \psi^{-1}_{2}(\psi_{2}(c)\times\Delta) \cup \psi^{-1}_{3}(\psi_{3}(c)\times\Delta).
\end{align*}
The attaching circle of $h^{(2)}$ is given by
\begin{align*}
\partial(h^{(2)}_1 \cap h^{(2)}_2 \cap h^{(2)}_3)
= \psi^{-1}_{i}(\psi_{i}(c)\times\{0\}).
\end{align*}
We can choose the framing of $h^{(2)}$ as $\psi^{-1}_{i}(\psi_{i}(c)\times\{e^{\pm\sqrt{-1}\pi/3}\})$.
That is, when we identify the compression body $X_i \cap X_j$
with $(\Sigma \times [0,1]) \cup (\text{$2$-handles})$,
the curve $c \times \{0\}$ is the attaching circle,
and $c \times \{1\}$ gives the framing.
\end{proof}
\begin{figure}[!tbp]
\centering
\includegraphics[scale=0.7]{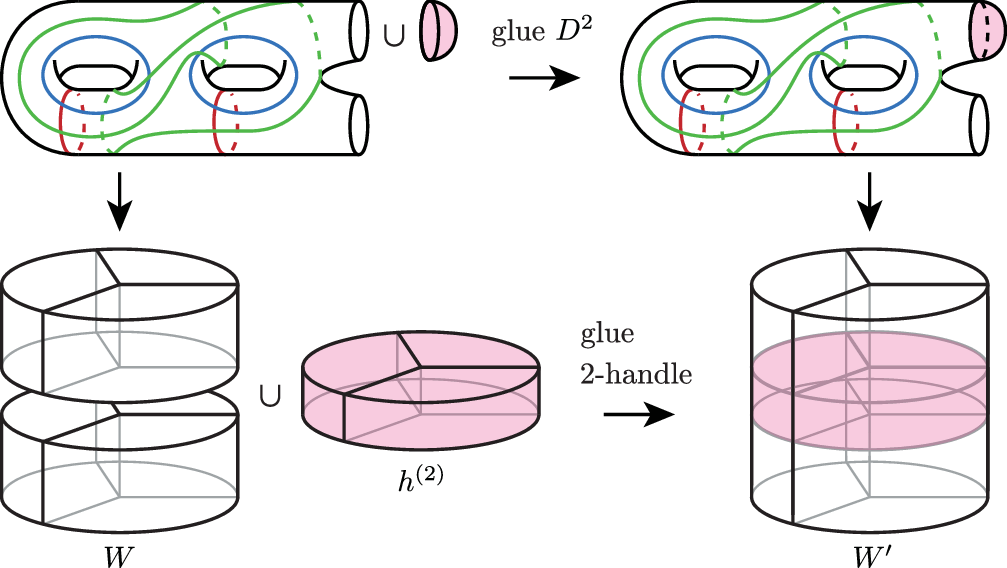}
\caption{Correspondence between a gluing of $D^2$ to a relative trisection diagram and the induced change in the trisected $4$-manifold.}
\label{fig:glue2hdl}
\end{figure}

For the case $b=1$, we obtain the following. 

\begin{theorem}\label{thm:cap-b=1}
Let $\calD = (\Sigma; \alpha, \beta, \gamma)$ be a $(g, k; 0, 1)$-relative trisection diagram.
Let $\calD' := (\Sigma\cup_{\partial}D^2; \alpha, \beta, \gamma)$ be the $(g, k)$-trisection diagram obtained by gluing the disk $D^2$ to the surface $\Sigma$ along the boundary circle.
Let $\calRT(\calD): W = W_1 \cup W_2 \cup W_3$ be the $(g, k; 0, 1)$-relative trisection induced by $\calD$, and let $\calT(\calD'): X = X_1 \cup X_2 \cup X_3$ be the $(g, k)$-trisection induced by $\calD'$.
Then the closed $4$-manifold $X$ is obtained from $W$ by attaching a $4$-handle.
\end{theorem}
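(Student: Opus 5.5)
The approach mirrors the proof of Theorem~\ref{thm:cap-b>1}. We realize the relative trisection $\calRT(\calD)$ inside the closed trisection $\calT(\calD')$ and identify the complement as a $4$-ball. Since $\calT(\calD')$ is a $(g,k)$-trisection, for each $i$ there is a diffeomorphism $\psi_i\colon X_i\to Z_k=\natural^k(S^1\times D^3)$. Under $\psi_i$, the genus-$g$ Heegaard splitting $Y^-_{g,k}\cup Y^+_{g,k}$ of $\partial Z_k$ has central surface $\psi_i(\Sigma\cup_\partial D^2)$. The disk $\psi_i(D^2)$ is embedded in that surface. We first isotope $\psi_i$ so that a collar $\psi_i(D^2)\times\Delta$ sits in $Z_k$ as a product neighborhood of the disk. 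Its faces should be $\psi_i(D^2)\times\partial^\pm\Delta\subset Y^\pm_{g,k}$, and $\psi_i(D^2)\times\partial^0\Delta$ should be pushed into the interior of $Z_k$.

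The next step is to check that the closure $Z_k^\circ$ of $Z_k-(\psi_i(D^2)\times\Delta)$ is a model for the relative piece with parameters $(g,k;0,1)$. The relevant identity is
\begin{equation*}
Z_k = Z_k^\circ \cup_{\partial D^2\times\Delta} (D^2\times\Delta).
\end{equation*}
Here $D^2\times\Delta\cong D^4$ is glued along a $3$-ball $\partial D^2\times\Delta$, so it is a boundary-parallel ball, and $Z_k^\circ\cong Z_k$. More precisely, write $Z_k$ for $(g,k;0,1)$ as $U_{0,1}\bcs V_n$ with $U_{0,1}=D^2\times\Delta$ and $n=k$. Then the closed model is obtained by gluing a second copy of $D^2\times\Delta$ to $U_{0,1}$ along $\partial D^2\times\Delta$, and the union is again a $4$-ball. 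This is the analogue of the computation $Z_{k-1}=Z_k\cup(\psi_i(D)\times\Delta)$ in the proof of Theorem~\ref{thm:cap-b>1}. In this case the piece added is a $4$-dimensional $0$-handle-like ball rather than a $2$-handle, because the disk glued along the single boundary circle leaves $\Sigma_{0,0}=S^2$ rather than a planar surface with boundary.

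From this decomposition, the pieces $\psi_i^{-1}(Z_k^\circ)$ form a $(g,k;0,1)$-relative trisection whose triple intersection is $\Sigma$ and whose compression bodies contain compressing disks bounded by $\alpha,\beta,\gamma$. By Theorem~\ref{thm:rt-rtd}(i), this relative trisection is diffeomorphic to $\calRT(\calD)$, so we identify it with $W$. The three pieces $h_i:=\psi_i^{-1}(\psi_i(D^2)\times\Delta)$ glue along $D^2\times\partial^\pm\Delta$ to form $h:=h_1\cup h_2\cup h_3\cong D^2\times D^2=D^4$. This gives the trivial $(0,0;0,1)$-relative trisection of $D^4$ with diagram $(D^2;\emptyset,\emptyset,\emptyset)$. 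Then $X=W\cup h$, and since $X$ is closed, $h$ is attached along all of its boundary $\partial h=S^3=\partial W$. Therefore $h$ is a $4$-handle.

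The main obstacle is making the first step rigorous. We need the $\psi_i$ to be chosen compatibly, so that each $\psi_i(D^2)\times\Delta$ is a genuine product collar in $Z_k$ and the three collars match along the double intersections $X_i\cap X_j$. Each $X_i\cap X_j$ is a handlebody containing the disk $D^2$ in its boundary surface, and we would take in it a half-collar $D^2\times[0,\varepsilon]$. These three half-collars together with $D^2\times D^2_\varepsilon$ near the central disk give $h$. The model inclusion then follows from the uniqueness of Heegaard splittings of $\#^k(S^1\times S^2)$ cited in Definition~\ref{dfn:tris-model}, which lets us adjust each $\psi_i$ by an isotopy. Alternatively, one can bypass the models and argue directly that $W$ is obtained from $X$ by removing a neighborhood $D^2\times D^2$ of the central disk $D^2\subset\Sigma\cup_\partial D^2$, built compatibly with the trisection.
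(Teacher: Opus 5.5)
You are taking a different route from the paper. The paper excises nothing from $X$. It glues $\calRT(\calD)$ to the trivial $(0,0;0,1)$-relative trisection $(D^2;\emptyset,\emptyset,\emptyset)$ of $D^4$, using that both induce $(D^2,\id)$ on $S^3$ (Proposition~2.12 of \cite{CasOzb19}), and notes that the glued diagram is $\calD'$, so $X\cong W\cup_{S^3}D^4$. Your route can be made to work, but the step meant to show that $Z_k^\circ$ is a $(g,k;0,1)$-sector fails as written. You glue $D^2\times\Delta$ along $\partial D^2\times\Delta$ and call this a $3$-ball. In fact $\partial D^2\times\Delta\cong S^1\times\Delta$ is a solid torus, and gluing a $4$-ball along it is a $2$-handle attachment, exactly as in Theorem~\ref{thm:cap-b>1}. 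So your model claim is false: $U_{0,1}\cup_{\partial D^2\times\Delta}(D^2\times\Delta)=(D^2\cup_\partial D^2)\times\Delta=S^2\times\Delta\cong S^2\times D^2$, which is not a $4$-ball. Your displayed identity would also give $\chi(Z_k^\circ)=\chi(Z_k)-1$, which contradicts $Z_k^\circ\cong Z_k$. Your remark that the capped page is $\Sigma_{0,0}=S^2$ is the symptom of this: naively copying the $b\geq 2$ computation produces $S^2\times\Delta$.

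What actually distinguishes $b=1$ is where the face $D^2\times\partial^0\Delta$ goes. For $b\geq 2$ it stays on the boundary as part of $\partial^0U_{0,b-1}$, so the new piece meets the old sector only in $c\times\Delta$. In the closed model there is no $Y^0$, so this face lies in the interior of $X_i$, as in your own setup. The gluing region is therefore the full frontier $(D^2\times\partial^0\Delta)\cup(\partial D^2\times\Delta)=\partial^0U_{0,1}\cong D^2\times[-1,1]$, which is a $3$-ball. With that correction:
\begin{itemize}
\item $U_{0,1}\cup_{\partial^0U_{0,1}}(D^2\times\Delta)$ is a $4$-ball;
\item $Y^\pm_{g,k;0,1}\cup(D^2\times\partial^\pm\Delta)$ are genus-$g$ handlebodies with boundary $\Sigma_{g,1}\cup D^2\cong\Sigma_g$;
\item the frontier of $h_i$ is $Y^0_{0,1}$.
\end{itemize}
The rest of your argument then goes through. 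The compressing disks can be kept off the collar, Theorem~\ref{thm:rt-rtd}(i) identifies the complement with $\calRT(\calD)$, and $h\cong D^2\times D^2$ is glued along all of $\partial h=S^3$. Your compatibility worry also disappears if you build the ball intrinsically, as the neighborhood $\nu(D;X)$ of Appendix~A, rather than through the $\psi_i$. Once repaired, your argument is self-contained and exhibits the $4$-handle as an explicitly trisected ball, at the cost of the model bookkeeping above. The paper's proof is instead a short reduction to a cited gluing theorem.
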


\begin{proof}
Gluing a disk $D^2$ to $\calD$ can be regarded as attaching the $(0,0;0,1)$-relative trisection diagram $(D^2;\emptyset,\emptyset,\emptyset)$ of the $4$-ball $D^4$.
This is because the open books induced by $\calD$ and $(D^2;\emptyset,\emptyset,\emptyset)$ both agree with $(D^2,\id)$, and hence can be glued as relative trisection diagrams by Proposition~2.12 of \cite{CasOzb19}.
Since $\calD'=\calD \cup_{\partial} (D^2;\emptyset,\emptyset,\emptyset)$, it follows that $W'\cong W\cup_{S^3} D^4$.
\end{proof}

Combining Theorems~\ref{thm:cap-b>1} and \ref{thm:cap-b=1}, we obtain Theorem~\ref{thm:cap-handle}.

\section{Minimal genus relative trisections with distinct parameters}

In this section, we present examples of minimal genus relative trisections of the same $4$-manifold whose associated parameters $(g,k;p,b)$ are different.
Note that relative trisections with different parameters $(g,k;p,b)$ are necessarily non-diffeomorphic.

\begin{proof}[Proof of Theorem~\ref{thm:diff-gkpb}]
Let $W^{-}(0,1)$ be the contractible $4$-manifold given by the handlebody diagram in Figure~\ref{fig:Kd-W-01}.
\begin{figure}[!tbp]
\centering
\includegraphics[scale=0.7]{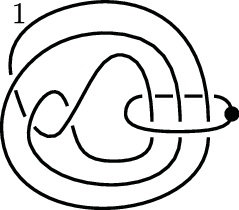}
\caption{A handlebody diagram of $W^{-}(0,1)$.}
\label{fig:Kd-W-01}
\end{figure}
The trisection genus of $W^{-}(0,1)$ is $3$ (see \cite[Theorem~1.3]{Tak25b}).
Let $\calD$ and $\calD'$ be the diagrams depicted on the left and right of Figure~\ref{fig:td-W-01}, respectively.
\begin{figure}[!tbp]
\centering
\includegraphics[scale=0.7]{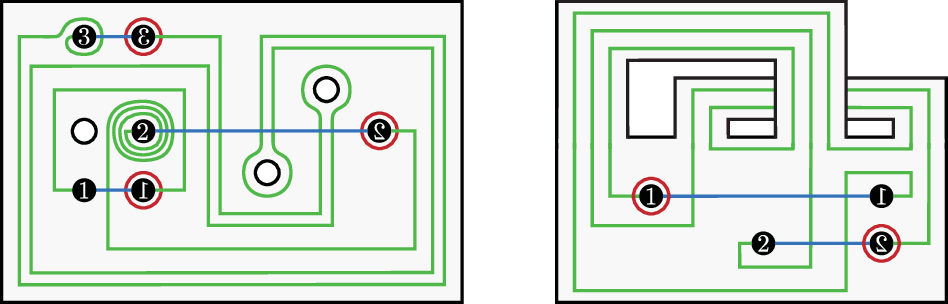}
\caption{Two genus-$3$ relative trisection diagrams of $W^{-}(0,1)$.}
\label{fig:td-W-01}
\end{figure}
Here, $\calD$ is a $(3,3;0,4)$-relative trisection diagram of $W^{-}(0,1)$ (see the proof of Theorem~1.3 in \cite{Tak25b}).
We now show that $\calD'$ is a $(3,2;1,1)$-relative trisection diagram of $W^{-}(0,1)$.
We want to verify that $(\Sigma;\alpha,\beta)$, $(\Sigma;\beta,\gamma)$, and $(\Sigma;\gamma,\alpha)$ are diffeomorphism and handleslide equivalent to the standard diagram in Figure~\ref{fig:td-std3211}.
\begin{figure}[!tbp]
\centering
\includegraphics[scale=0.7]{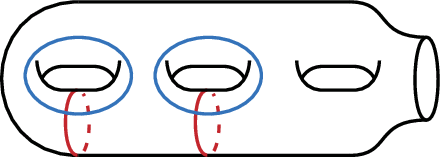}
\caption{Standard sutured Heegaard diagram of type $(3,2;1,1)$.}
\label{fig:td-std3211}
\end{figure}
It is easy to see that $(\Sigma;\alpha,\beta)$ and $(\Sigma;\gamma,\alpha)$ are standard (see Figures~\ref{fig:td-Sab} and \ref{fig:td-Sca}).
\begin{figure}[!tbp]
  \begin{minipage}[b]{0.49\linewidth}
    \centering
    \includegraphics[scale=0.65]{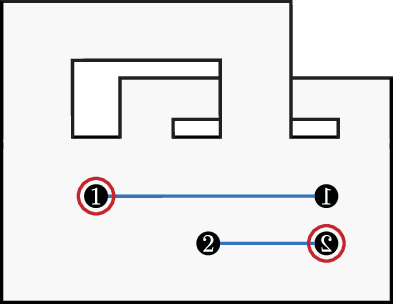}
    \caption{$(\Sigma;\alpha,\beta)$}
    \label{fig:td-Sab}
  \end{minipage}
  \begin{minipage}[b]{0.49\linewidth}
    \centering
    \includegraphics[scale=0.65]{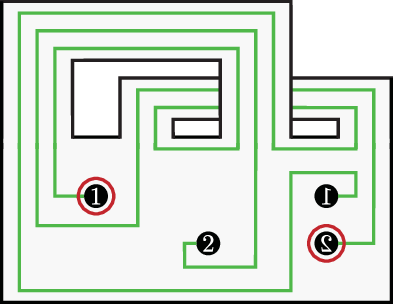}
    \caption{$(\Sigma;\gamma,\alpha)$}
    \label{fig:td-Sca}
  \end{minipage}
\end{figure}
As shown in Figure~\ref{fig:td-Sbc-modif}, we can modify $(\Sigma;\gamma,\alpha)$ into the standard diagram by diffeomorphisms on $\Sigma$ and slides of the curves.
\begin{figure}[!tbp]
\centering
\includegraphics[scale=0.7]{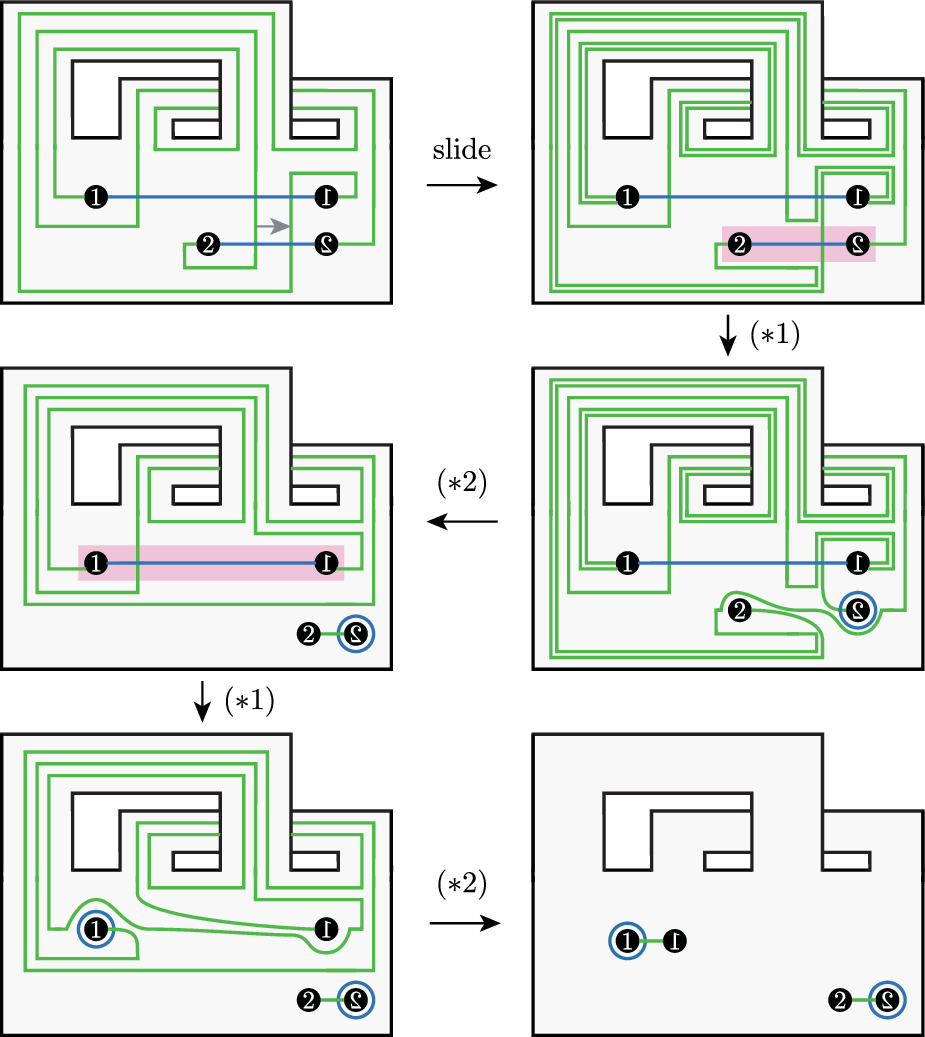}
\caption{Diffeomorphisms and handleslides that modify $(\Sigma;\beta,\gamma)$ into the standard diagram.}
\label{fig:td-Sbc-modif}
\end{figure}
Note that the operation $(*1)$ is a diffeomorphism acting on the shaded region.
The operation $(*2)$ consists of dragging the black disk together with the $\beta$-circle.
When the black disk approaches another $\beta$-curve, then it can pass through by performing a slide of the $\beta$-curve.
For details of these operations, see Subsection~2.3 of \cite{Tak25a}.

By applying the algorithm given in Subsection 2.2 of \cite{KimMil20}, the relative trisection diagram $\calD'$ yields the first handlebody diagram of Figure~\ref{fig:Kd-move-W-01}.
The subsequent handlemoves in the same figure show that the $4$-manifold induced by $\calD'$ is diffeomorphic to $W^{-}(0,1)$.
\begin{figure}[!tbp]
\centering
\includegraphics[scale=0.7]{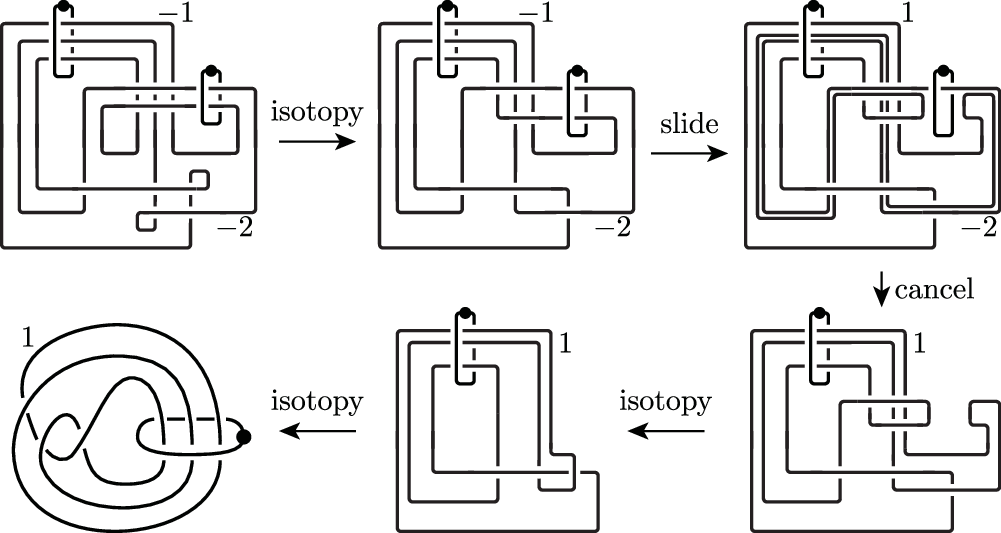}
\caption{Handlemoves of $W^{-}(0,1)$.}
\label{fig:Kd-move-W-01}
\end{figure}
\end{proof}

\section*{Appendix~A.~Connected sum operation of a trisection and a relative trisection}

In this appendix, we describe the connected sum operation for a trisection of a closed $4$-manifold and a relative trisection of a $4$-manifold with boundary.
Although the construction is essentially the same as the connected sum of two closed trisections, to the best of our knowledge, a detailed description does not appear explicitly in the literature.

To define the connected sum of trisections, we remove a small $4$-ball from each trisected $4$-manifold.
This $4$-ball is required to contain a small disk in the triple intersection surface and to be compatible with the given trisection.
First, we provide a detailed description of how to properly choose such a $4$-ball.
Let $X = X_1 \cup X_2 \cup X_3$ be a (relative) trisection, and let $\Sigma := X_1 \cap X_2 \cap X_3$ be the triple intersection surface.
Let $D \subset \Sigma$ be a smoothly embedded disk.
We define a neighborhood $\nu(D; X)$ of $D$ in $X$ as follows.

\begin{enumerate}[label=(\arabic*)]
\item
We define $\nu(D; X_i \cap X_j)$ as a collar neighborhood of $D$ in the $3$-manifold $X_i \cap X_j$. Then we have
\begin{equation*}
\nu(D; X_i \cap X_j) \cong D \times [0, 1] \cong D^3.
\end{equation*}
\item
We define $\nu(D; \partial X_i)$ as the union of the two collar neighborhoods $\nu(D; X_i \cap X_{i\pm1})$ in the boundary $3$-manifold $(X_i \cap X_{i+1}) \cup(X_i \cap X_{i-1})$:
\begin{equation*}
\nu(D; \partial X_i) := \nu(D; X_i \cap X_{i+1}) \cup \nu(D; X_i \cap X_{i-1}).
\end{equation*}
Since the intersection is $D$, we see that $\nu(D; \partial X_i) \cong D^3$.
\item
We define $\nu(D; X_i)$ as a collar neighborhood of $\nu(D; \partial X_i)$ in the $4$-manifold $X_i$:
\begin{equation*}
\nu(D; X_i) := \nu(\nu(D; \partial X_i); X_i).
\end{equation*}
This $4$-manifold is diffeomorphic $D^3\times[0,1]\cong D^4$. See Figure~\ref{fig:nuD}.
\item
Finally, we define $\nu(D; X)$ as the union of the three neighborhoods:
\begin{equation*}
\nu(D; X) := \nu(D; X_1) \cup \nu(D; X_2) \cup \nu(D; X_3).
\end{equation*}
\end{enumerate}

This decomposition is a $(0,0;0,1)$-relative trisection of $D^4$, and the corresponding diagram is $(D;\emptyset,\emptyset,\emptyset)$.
\begin{figure}[!tbp]
\centering
\includegraphics[scale=0.7]{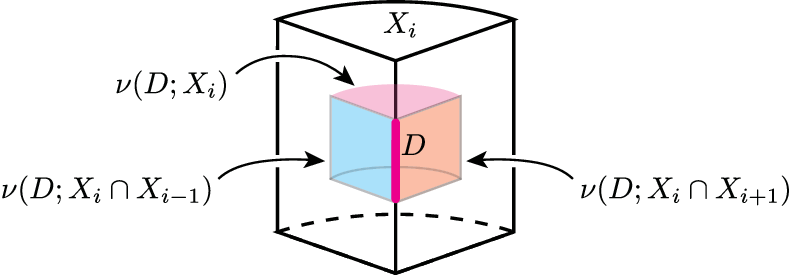}
\caption{A schematic picture of $\nu(D;X_i)$.}
\label{fig:nuD}
\end{figure}

Now we are ready to define the connected sum for a trisection of a closed $4$-manifold and a relative trisection of a $4$-manifold with boundary.

\begin{definition}\label{dfn:consum:tr-rt}
Let $\calT:X = X_1 \cup X_2 \cup X_3$ be a $(g,k)$-trisection of a closed $4$-manifold $X$, and let $\calRT':W' = W'_1 \cup W'_2 \cup W'_3$ be a $(g',k';p',b')$-relative trisection of a $4$-manifold $W'$ with boundary.
Let $D \subset X_1 \cap X_2 \cap X_3$ and $D' \subset W'_1 \cap W'_2 \cap W'_3$ be small disks in the respective triple intersection  surfaces.
Choose neighborhoods $\nu(D;X)$ and $\nu(D';W')$ as described above.
We define the connected sum $X \# W'$ by
\begin{equation*}
X\# W' := ( X-\nu(D;X) ) \cup ( W'-\nu(D';W') )
\end{equation*}
where the two boundary $2$-spheres $\partial \nu(D;X)$ and $\partial \nu(D';W')$ are identified.
For each $i \in \{1,2,3\}$, the new sector $W''_i$, which is a $4$-dimensional $1$-handlebody of genus $k+k'$, is defined by
\begin{equation*}
W''_{i} := ( X_{i}-\nu(D;X) ) \bcs ( W'_{i}-\nu(D';W') )
\end{equation*}
where the boundary connected sum is taken by identifying the two $3$-balls $X_i \cap \partial \nu(D;X)$ and $W'_i \cap \partial \nu(D';W')$.
The $(g+g',k+k';p',b')$-relative trisection of $X \# W'$ obtained as the connected sum of $\calT$ and $\calRT'$ is defined by
\begin{equation*}
\calT \# \calRT' : X\# W = W''_{1} \cup W''_{2} \cup W''_{3}.
\end{equation*}
\end{definition}

A relative trisection diagram representing the connected sum $\calT \# \calRT'$ is given as follows.
Let $\calD := (\Sigma;\alpha,\beta,\gamma)$ be a $(g,k)$-trisection diagram of $\calT$,
and let $\calD' := (\Sigma';\alpha',\beta',\gamma')$ be a $(g',k';p',b')$-relative trisection diagram of $\calRT'$.
Then the diagram
\begin{equation*}
\calD\#\calD':=(\Sigma\#\Sigma';\alpha\cup\alpha',\beta\cup\beta',\gamma\cup\gamma')
\end{equation*}
is a $(g+g',k+k';p',b')$-relative trisection diagram of $\calT \# \calRT'$.
Here, the connected sum $\Sigma \# \Sigma'$ is defined by removing the disks $D \subset \Sigma$ and $D' \subset \Sigma'$ as in Definition~\ref{dfn:consum:tr-rt} and identifying their boundaries:
\begin{equation*}
\Sigma \# \Sigma' := (\Sigma-\Int{D}) \cup_{\partial{D}\sim\partial{D'}} (\Sigma'-\Int{D'}).
\end{equation*}


\section*{Acknowledgements}

The author would like to thank Kouichi Yasui for helpful comments.
The author was partially supported by JSPS KAKENHI Grant Number 24KJ1561.


\begin{thebibliography}{CGPC18}

\bibitem[Cas16]{Cas16}
Nickolas~A. Castro, \emph{Relative trisections of smooth 4-manifolds with
  boundary}, Ph.D. thesis, University of Georgia, 2016.

\bibitem[CGPC18]{CasGayPin18a}
Nickolas~A. Castro, David~T. Gay, and Juanita Pinz\'{o}n-Caicedo,
  \emph{Diagrams for relative trisections}, Pacific J. Math. \textbf{294}
  (2018), no.~2, 275--305.

\bibitem[CO19]{CasOzb19}
Nickolas~A. Castro and Burak Ozbagci, \emph{Trisections of 4-manifolds via
  {L}efschetz fibrations}, Math. Res. Lett. \textbf{26} (2019), no.~2,
  383--420.

\bibitem[EO08]{EtnOzb08}
John Etnyre and Burak Ozbagci, \emph{Invariants of contact structures from open
  books}, Transactions of the American Mathematical Society \textbf{360}
  (2008), no.~6, 3133--3151.

\bibitem[GG06]{GirGoo06}
Emmanuel Giroux and Noah Goodman, \emph{On the stable equivalence of open books
  in three-manifolds}, Geom. Topol. \textbf{10} (2006), 97--114 (English).

\bibitem[GK16]{GayKir16}
David Gay and Robion Kirby, \emph{Trisecting 4-manifolds}, Geom. Topol.
  \textbf{20} (2016), no.~6, 3097--3132.

\bibitem[GS99]{GomSti99}
Robert~E. Gompf and Andr\'{a}s~I. Stipsicz, \emph{{$4$}-manifolds and {K}irby
  calculus}, Graduate Studies in Mathematics, vol.~20, American Mathematical
  Society, Providence, RI, 1999.

\bibitem[Isl21]{Isl21}
Gabriel Islambouli, \emph{Nielsen equivalence and trisections}, Geom. Dedicata
  \textbf{214} (2021), 303--317.

\bibitem[IO24]{IsoOga24a}
Tsukasa Isoshima and Masaki Ogawa, \emph{Nielsen equivalence and multisections of {$4$}-manifolds}, preprint, arXiv:2401.15822.

\bibitem[KM20]{KimMil20}
Seungwon Kim and Maggie Miller, \emph{Trisections of surface complements and
  the {P}rice twist}, Algebr. Geom. Topol. \textbf{20} (2020), no.~1, 343--373.

\bibitem[Rol76]{Rol76}
Dale Rolfsen, \emph{Knots and links}, Mathematics Lecture Series, vol. No. 7,
  Publish or Perish, Inc., Berkeley, CA, 1976.

\bibitem[Tak24]{Tak24}
Natsuya Takahashi, \emph{Non-diffeomorphic minimal genus relative trisections
  of the same 4-manifold}, J. Knot Theory Ramifications \textbf{33} (2024),
  no.~6, Paper No. 2450016. 

\bibitem[Tak25a]{Tak25a}
Natsuya Takahashi, \emph{Minimal genus relative trisections of corks}, Geom. Dedicata
  \textbf{219} (2025), no.~2, Paper No. 29. 

\bibitem[Tak25b]{Tak25b}
Natsuya Takahashi, \emph{Exotic 4-manifolds with small trisection genus}, Topology Appl.
  \textbf{368} (2025), Paper No. 109382. 

\bibitem[Wal68]{Wald68}
Friedhelm Waldhausen, \emph{Heegaard-{Z}erlegungen der {$3$}-{S}ph\"{a}re},
  Topology \textbf{7} (1968), 195--203.

\end{thebibliography}



\end{document}